\apptocmd{\sloppy}{\hbadness 10000\relax}{}{}
\newtheorem{theorem}{Theorem}[section]
\newtheorem{lemma}[theorem]{Lemma}
\newtheorem{question}[theorem]{Question}
\newtheorem{proposition}[theorem]{Proposition}
\newtheorem{corollary}[theorem]{Corollary}
\theoremstyle{definition}
\newtheorem{definition}[theorem]{Definition}
\newtheorem{remark}[theorem]{Remark}
\newtheorem{example}[theorem]{Example}
\newcommand{\NN}{\mathbb{N}}
\newcommand{\RR}{\mathbb{R}}
\newcommand{\CC}{\mathbb{C}}
\DeclareMathOperator{\id}{id}
\DeclareMathOperator{\lie}{Lie}
\newcommand{\po}[2]{\left\{ {#1}, {#2} \right\}}
\newcommand{\cont}{\mathcal{C}}
\newcommand{\holo}{\mathcal{O}}
\newcommand{\Kappa}{\mathrm{K}}
\title{Holomorphic automorphisms of Markov-type surfaces}
\author{Rafael B. Andrist}
\keywords{Markov-type surface, holomorphic automorphisms, Hamiltonian density property, Andersen--Lempert theory, Markov numbers, Markov triples, tame set, germs of vector fields, isolated singularity}
\subjclass{32M17, 32M25, 14J50, 32Q56, 14J17}
\begin{document}

\begin{abstract}
Every complex surface of Markov type, i.e.\ the variety given by $x^2 + y^2 + z^2 + Exyz - Ax - By - Cz  - D = 0$, has the symplectic density property and the Hamiltonian density property. We prove a singular symplectic version of the Anders{\'e}n--Lempert theorem for normal reduced affine complex varieties and apply it to describe the holomorphic symplectic automorphisms of a complex surface of Markov type. To this end, we also investigate the germs of vector fields in isolated singularities of type $A_k$ and $D_k$. Moreover, we show that any injective self-map of the set of ordered Markov triples can be realized by a holomorphic symplectic automorphism. 	
\end{abstract}

\maketitle

\section{Introduction}

The Diophantine solutions of \emph{Markov's equation} 
\[
x^2 + y^2 + z^2 = 3xyz
\]
were originally considered by Markov \cites{MR4788527, MR1510073} in 1879--1880. Due to symmetry, any permutation of the coordinates $(x,y,z)$ of a solution yields another solution. Moreover, $(x,y,z) \mapsto (x,y,3xy-z)$ is an involution that preserves the equation. In fact, Markov already proved that every Diophantine solution of the equation can be obtained from $(x,y,z) = (1,1,1)$ using only these maps. These solutions $(x,y,z) \in \NN^3$, e.g.\ $(1,1,1), (1,1,2), (1,2,5), (1,5,13), (2,5,29), \dots$ are now called \emph{Markov triples}, and any of such $x,y,z$ are called \emph{Markov numbers}. In 1913, Frobenius \cite{Frobenius} raised the question whether a Diophantine solution of Markov's equation with $x \leq y \leq z$ is uniquely determined by $z$, which is now known as the \emph{uniqueness conjecture} for Markov triples.

It is natural to study the complex solutions of the Markov equation which then define a singular affine variety, called the \emph{Markov surface}. 

The Markov surface admits a projective completion where the dual graph of the boundary divisor is circular with weights $((0,0))$, see Perepechko \cite{MR4337481}. Thus, it is a so-called \emph{generalized Gizatullin surface} according to Kaliman, Kutzschebauch and Leuenberger \cite{MR4083242}*{Theorem 1.4, Case (2a)}, i.e.\ the subgroup of its holomorphic automorphisms generated by complete algebraic vector fields acts with an open orbit.

In this article, we consider the slightly more general surfaces of \emph{Markov type}, i.e.\
\[
M := \{ (x,y,z) \in \CC^3 \,:\, x^2 + y^2 + z^2 + Exyz - Ax - By - Cz  - D = 0 \}
\]
where $E \neq 0$. Markov's equation would correspond to the special case $A=B=C=D=0$ and $E=-3$. In algebraic geometry, it is common to rescale to $E = 1$. We find it more convenient to keep the constant $E \neq 0$ for most of our computations. This equation also comprises an affine form of the \emph{Cayley cubic} $x^2 + y^2 + z^2 + xyz = 4$, see \cite{MR2649343}*{Section 1.5} as well as Mordell's equation \cite{MR0056619} $x^2 + y^2 + z^2 = 2xyz + n$. The case $E = 0$ is not considered here; this would simply be a quadric surface whose holomorphic automorphisms have been described both in the smooth and singular case (for the latter, see Kutzschebauch, Leuenberger and Liendo \cite{MR3320241}).

The group of algebraic automorphisms of $M$ is well studied, see El-Huti \cite{MR0342518}, Cantat and Loray \cite{MR2649343}*{Theorem 3.1} and Perepechko \cite{MR4337481}*{Theorem 3.3}. In particular, the algebraic automorphisms of $M$ form a infinite discrete group.

On the regular part of the surface $M$, there exists a natural algebraic volume form that is induced by the volume form of the ambient space:
\begin{equation*}
\omega = \frac{dx \wedge dy}{2z + Exy - C} = \frac{dy \wedge dz}{2x + Eyz - A} = \frac{dz \wedge dx}{2y + Exz - B}
\end{equation*}
Every algebraic automorphism of $M$ either preserves or anti-preserves $\omega$, see	 \cite{MR2649343}*{Proposition 3.6}. Since we are considering surfaces, it will be convenient to consider the volume form also as symplectic form, and we will switch between these two viewpoints whenever it is convenient.

While the algebraic automorphisms of the surfaces of Markov type have been studied, its transcendental automorphisms have so far not been considered at all. In this article, we find a large number of holomorphic automorphisms of $M$. In fact, we prove the so-called Hamiltonian density property and the symplectic density property (see Section \ref{sec-DP}) for $M$. As a consequence, $M$ allows for a Runge-type approximation theorem for holomorphic symplectic injections by holomorphic symplectic automorphisms, and the holomorphic automorphisms of $M$ form an infinite-dimensional group. This shows a strong contrast between the zero-dimensional algebraic automorphism group and the infinite-dimensional holomorphic automorphism group.

\begin{theorem}
\label{thm-main-1}
The Markov-type surface
\[
M = \{ (x,y,z) \in \CC^3 \,:\, x^2 + y^2 + z^2 + Exyz - Ax - By - Cz  - D = 0 \}
\]
with the natural symplectic form on its regular part has the Hamiltonian density property.
\end{theorem}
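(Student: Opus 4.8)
The plan is to read the Hamiltonian density property through the correspondence between Hamiltonian fields and functions: since $\iota_{V_f}\omega = df$ yields $[V_f,V_g] = V_{\po{f}{g}}$, and $V_f=0$ only for locally constant $f$, the property is equivalent to the statement that the Poisson subalgebra of $(\holo(M_{\mathrm{reg}})/\CC,\po{\cdot}{\cdot})$ generated by those Hamiltonians whose fields are $\CC$-complete is dense in the Fr{\'e}chet topology. Here, with $P$ the defining polynomial, the Poisson structure is (up to the normalization fixed by $\omega$) the Nambu bracket $\po{f}{g}=\det(\nabla P,\nabla f,\nabla g)$ and $V_f = \nabla P\times\nabla f$.

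First I would produce a large supply of complete Hamiltonians. For any entire function $\phi$ of one variable, $V_{\phi(x)} = \phi'(x)\,V_x$ with $V_x = \nabla P\times\nabla x = P_y\partial_z - P_z\partial_y$. Restricted to a fibre $\{x=c\}$ of the first coordinate, a conic in the $(y,z)$-plane, this field is affine-linear in $(y,z)$, so its flow is an affine flow defined for all complex time; since the flow preserves the fibration and depends holomorphically on $c$, the field $V_{\phi(x)}$ is complete on $M$. The same holds for $\psi(y)$ and $\chi(z)$. Thus each $\phi(x),\psi(y),\chi(z)$ is a complete Hamiltonian, and the three families $\holo(\CC)\cdot V_x$, $\holo(\CC)\cdot V_y$, $\holo(\CC)\cdot V_z$ consist of complete fields.

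Next I would run the generation. At every regular point $\nabla P\neq 0$, the fields $V_x,V_y,V_z$, equal to $\nabla P\times e_i$ up to sign, span $(\nabla P)^{\perp}=T_pM_{\mathrm{reg}}$, so the complete fields already span every tangent space. Poisson brackets of the one-variable Hamiltonians give $\po{x}{y}=P_z=2z+Exy-C$, and cyclically $P_x,P_y$; because $E\neq 0$ — this is where the hypothesis enters and where $M$ genuinely differs from a quadric — one solves these for $xy,yz,zx$, and iterated brackets together with the multipliers $\phi'(x),\psi'(y),\chi'(z)$ produce, up to Fr{\'e}chet closure, all functions $f(x,y)$ and all $z\,g(x,y)$. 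Via the branched double cover $M\to\CC^2_{x,y}$ with deck involution $z\mapsto C-Exy-z$, every holomorphic function on $M$ has the form $f(x,y)+z\,g(x,y)$, so this closure is all of $\holo(M_{\mathrm{reg}})/\CC$.

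Finally, the passage from a dense Lie algebra of fields generated by complete Hamiltonians to the Hamiltonian density property itself is supplied by the singular symplectic Anders{\'e}n--Lempert framework developed in the paper. The main obstacle is not the existence of complete fields — the affine-linear fibre flows settle that at once — but the generation-and-density step carried out uniformly across the singular fibres of the coordinate projections and, above all, through the isolated singular points of $M$ of type $A_k$ or $D_k$; controlling the germs of the generated Hamiltonian fields there is precisely what forces the local analysis at such singularities.
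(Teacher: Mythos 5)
Your overall strategy coincides with the paper's: establish completeness of the shear fields $\phi(x)V^x,\psi(y)V^y,\chi(z)V^z$ via the affine--linear flow on the fibres of the coordinate projections (this is exactly Lemma \ref{lemma-completeness} and its corollary), and then show that the Lie algebra under the Poisson bracket generated by the Hamiltonians $1,x^k,y^k,z^k$ is dense in $\holo(M)$, using the decomposition of functions on $M$ as $f(x,y)+z\,g(x,y)$. However, the sentence ``iterated brackets together with the multipliers $\phi'(x),\psi'(y),\chi'(z)$ produce, up to Fr\'echet closure, all functions $f(x,y)$ and all $z\,g(x,y)$'' is precisely the content of the theorem, and you give no argument for it. This generation step is not automatic: the paper needs three separate inductive lemmas (Lemmas \ref{lemma-step1}, \ref{lemma-step2}, \ref{lemma-step3}) in which one must choose specific linear combinations of brackets such as $\po{x^{k-p}y^m}{x^pz}$ for the two extreme values $p=0$ and $p=k$, eliminate unwanted monomials using the defining equation of $M$, and treat the degenerate case $k=m$ separately because the coefficients of the relevant linear combination vanish there. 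Since you correctly observe that the whole mechanism fails for $E=0$, you should expect that the closing-up of the bracket computations is delicate and must be exhibited, not asserted; as it stands this is a genuine gap at the crux of the proof.

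A secondary point: you misplace where the singularity analysis is needed. The Hamiltonian density property follows directly from $\lie(1,x^k,y^k,z^k)=\CC[M]$ together with density of $\CC[M]$ in $\holo(M)$ and the fact that the complete generators vanish on $M_{\mathrm{sing}}$; no Anders\'en--Lempert machinery and no study of the germs of vector fields at the $A_k$ and $D_4$ points enters here. The local analysis at the singularities (Lemmas \ref{lemma-Ak-sing}, \ref{lemma-Dk-sing} and Proposition \ref{prop-Markov-spanning}) is required only for the symplectic density property (Theorem \ref{thm-main-2}), where one must show that symplectic vector fields are Hamiltonian and are generated by $V^x,V^y,V^z$ across the singular points, and for the singular Anders\'en--Lempert theorem (Theorem \ref{thm-AL}). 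Your last paragraph therefore attributes the difficulty of Theorem \ref{thm-main-1} to the wrong place while leaving its actual difficulty unaddressed.
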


\begin{theorem}
\label{thm-main-2}
The Markov-type surface
\[
M = \{ (x,y,z) \in \CC^3 \,:\, x^2 + y^2 + z^2 + Exyz - Ax - By - Cz  - D = 0 \}
\]
with the natural symplectic form on its regular part has symplectic density property.
\end{theorem}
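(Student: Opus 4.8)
The plan is to deduce Theorem \ref{thm-main-2} (the symplectic density property) from Theorem \ref{thm-main-1} (the Hamiltonian density property), which I am entitled to assume. The essential observation is that on a complex surface the symplectic form and the volume form coincide, so the symplectic vector fields are exactly the volume-preserving ones, and the Hamiltonian vector fields form a distinguished subclass. Recall that a Hamiltonian vector field $X_H$ is defined by $\iota_{X_H}\omega = dH$ for a global holomorphic function $H$, whereas a symplectic (i.e. $\omega$-preserving) vector field $X$ is only required to satisfy $\mathcal{L}_X\omega = 0$, equivalently $d(\iota_X\omega) = 0$, so that $\iota_X\omega$ is a \emph{closed} $1$-form rather than an \emph{exact} one. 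Thus the difference between the two density properties is governed entirely by the first holomorphic de Rham cohomology $H^1_{\mathrm{dR}}(M_{\mathrm{reg}})$: on the exact part the two Lie algebras agree.

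First I would record that Hamiltonian fields are in particular symplectic, so the Lie algebra generated by the complete Hamiltonian fields sits inside the Lie algebra generated by the complete symplectic fields. Hence the finite-dimensionally-spanned Lie algebra furnished by Theorem \ref{thm-main-1} already surjects onto all Hamiltonian (exact) vector fields on $M_{\mathrm{reg}}$, and it remains only to produce enough complete \emph{symplectic} fields to cover a complementary space of fields whose contraction with $\omega$ represents the nontrivial cohomology classes. Concretely, I would exhibit, for each generator of $H^1_{\mathrm{dR}}(M_{\mathrm{reg}})$, one explicit complete algebraic symplectic vector field whose associated closed $1$-form $\iota_X\omega$ represents that class; together with the Hamiltonian fields these span the full tangent space at every regular point and close up (modulo Hamiltonians) to give the symplectic density property.

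The key computation is therefore to determine $H^1_{\mathrm{dR}}(M_{\mathrm{reg}})$ and to realize its classes by complete symplectic fields. The natural candidates are the restrictions to $M$ of the three coordinate rotation-type fields arising from the cyclic and Vieta symmetries of the Markov equation; these preserve $\omega$ (since every algebraic automorphism preserves or anti-preserves $\omega$, their infinitesimal generators are symplectic), yet need not be Hamiltonian. I would check that the one-parameter flows of the chosen algebraic fields are complete on $M_{\mathrm{reg}}$ and then verify that the corresponding $1$-forms $\iota_X\omega$ span $H^1_{\mathrm{dR}}(M_{\mathrm{reg}})$ together with the exact forms, so that the symplectic field module is generated by the Hamiltonian submodule plus these finitely many completes.

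The main obstacle I anticipate is the cohomological bookkeeping near the singularities: $M_{\mathrm{reg}}$ is an affine surface minus its (finitely many, of type $A_k$ or $D_k$) singular points, and computing $H^1_{\mathrm{dR}}$ correctly requires controlling the contribution of each puncture. Here I would lean on the local analysis of germs of vector fields at $A_k$ and $D_k$ singularities promised in the introduction, to ensure that the closed $1$-forms I construct extend across (or correctly account for) each singular point, and that no spurious cohomology forces additional generators. Once the cohomology is pinned down and each class is matched to a complete symplectic field, the symplectic density property follows formally from Theorem \ref{thm-main-1} by taking the span of the Hamiltonian fields with these finitely many extra complete generators.
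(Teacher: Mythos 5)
Your overall strategy---reduce the symplectic density property to the Hamiltonian one by controlling the gap between closed and exact forms, i.e.\ the first holomorphic de Rham cohomology---is exactly the paper's strategy. But your proposal stops short of the step that constitutes the actual proof: you never determine $H^1_{\mathrm{dR}}$, and the contingency plan you sketch for the case where it is nonzero does not work. The paper shows that the relevant cohomology group \emph{vanishes}: $M$ is simply connected (Cantat--Loray), the singularities are quasi-homogeneous ADE points so Reiffen's singular Poincar\'e lemma (via Saito's criterion) applies, and the formal de Rham lemma on the Stein space $M$ then gives $H^1_d(M)\cong H^1(M,\CC)=0$. Hence every symplectic field (vanishing appropriately at the singularities) is already Hamiltonian, and no ``complementary'' complete symplectic fields are needed. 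Your proposed candidates for such fields---``infinitesimal generators'' of the cyclic permutation and Vieta symmetries---do not exist: those are discrete automorphisms, not one-parameter flows, so they have no generating vector fields. Had you carried out the cohomology computation you would have found the complementary space empty, but as written the proposal neither proves vanishing nor supplies a workable substitute.

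A second, subtler gap concerns the passage from $M_{\mathrm{reg}}$ to $M$. A symplectic vector field is only required to have $i_V\omega$ closed on $M_{\mathrm{reg}}$, and in general this form need not extend across the singular points, so the cohomology you actually need to control is not simply that of the punctured surface. The paper resolves this with the germ analysis at $A_k$ and $D_4$ singularities (Lemmas \ref{lemma-Ak-sing} and \ref{lemma-Dk-sing}, Proposition \ref{prop-Markov-spanning}, Corollary \ref{cor-Markov-spanning}): every holomorphic symplectic field on $M$ is an $\holo(M)$-combination of $V^x,V^y,V^z$, whence $i_\Xi\omega$ extends holomorphically to the singularities and the de Rham argument on all of $M$ applies (this is also what justifies taking $\ell=1$ in Lemma \ref{lemma-deRhamtrivial}). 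You gesture at ``leaning on the local analysis of germs,'' which is the right instinct, but the proposal does not explain what that analysis must deliver---namely the exclusion of the non-symplectic germ contributions $\widetilde{\Lambda}$ and $\widehat{\Lambda},\widehat{\Kappa}$---so this step remains unproved.
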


\begin{corollary}[see Corollary \ref{cor-inftrans}]
The group of holomorphic symplectic automorphisms acts $m$-transitively on the regular part of $M$ for every $m \in \NN$.
\end{corollary}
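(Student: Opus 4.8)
The plan is to derive $m$-transitivity from the Hamiltonian density property (Theorem \ref{thm-main-1}) by the now-standard Anders\'en--Lempert/orbit argument, adapted to the symplectic setting. Let $\mathfrak{g}$ denote the Lie algebra generated by the complete Hamiltonian vector fields on the regular part $M_{\mathrm{reg}}$, and recall that the time-$t$ flow of any complete Hamiltonian field is a holomorphic symplectic automorphism of $M$; hence every element of the pseudo-group $\mathcal{G}$ generated by such flows is a holomorphic symplectic automorphism. Fix $m$ distinct regular points $p_1,\dots,p_m$, write $p=(p_1,\dots,p_m)$, and consider the diagonal action of $\mathcal{G}$ on the ordered configuration space $\mathrm{Conf}_m(M_{\mathrm{reg}})$ of $m$-tuples of distinct points.

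First I would show that the evaluation of $\mathfrak{g}$ at the finite set $\{p_1,\dots,p_m\}$ is all of $\bigoplus_{i=1}^m T_{p_i}M$. Since $\omega$ is nondegenerate on $M_{\mathrm{reg}}$, any prescribed tangent vectors $v_i\in T_{p_i}M$ are the values $X_H(p_i)$ of the Hamiltonian field of a suitable function: as $M$ is a normal affine, hence Stein, variety and the $p_i$ are distinct regular points, Cartan's interpolation theorem provides a global holomorphic $H$ on $M$ with prescribed $1$-jet at each $p_i$, so that $X_H(p_i)=v_i$. Thus the evaluation map $\mathrm{ev}\colon \mathfrak{h}\to\bigoplus_i T_{p_i}M$ on the space $\mathfrak{h}$ of all holomorphic Hamiltonian fields is surjective. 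Since $\mathrm{ev}$ is continuous for the compact-open topology and Theorem \ref{thm-main-1} gives $\mathfrak{h}\subseteq\overline{\mathfrak{g}}$, the image $\mathrm{ev}(\mathfrak{g})$ is dense in the finite-dimensional space $\bigoplus_i T_{p_i}M$, and a dense linear subspace of a finite-dimensional space is the whole space, so $\mathrm{ev}(\mathfrak{g})=\bigoplus_i T_{p_i}M$.

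Next I would invoke the orbit theorem of Sussmann--Stefan: the $\mathcal{G}$-orbit of $p$ is an immersed submanifold of $\mathrm{Conf}_m(M_{\mathrm{reg}})$ whose tangent space at $p$ is exactly $\mathrm{ev}(\mathfrak{g})$. By the previous step this equals the full tangent space $\bigoplus_i T_{p_i}M=T_p\,\mathrm{Conf}_m(M_{\mathrm{reg}})$, so the orbit is open. This is where completeness of the generators is used: although the bracket fields realizing the spanning directions need not themselves be complete, composing the flows of the complete generators yields, by the orbit theorem, an honest local submersion from a product of flow-time parameters onto a neighborhood of $p$, hence an actual automorphism realizing the prescribed infinitesimal motion.

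Finally, since $M$ is an irreducible normal affine surface, $M_{\mathrm{reg}}$ is connected of real dimension $4$, and therefore the ordered configuration space $\mathrm{Conf}_m(M_{\mathrm{reg}})$ is connected (remove finitely many points from a connected manifold of dimension $\geq 2$ and induct on the forgetful fibrations). The $\mathcal{G}$-orbits are open and partition this connected space, so there is a single orbit: $\mathcal{G}$ acts transitively on $\mathrm{Conf}_m(M_{\mathrm{reg}})$. As $\mathcal{G}$ lies inside the group of holomorphic symplectic automorphisms of $M$, that group acts $m$-transitively on $M_{\mathrm{reg}}$ for every $m\in\NN$. The main obstacle is precisely the passage from the abstract density statement to an automorphism realizing a prescribed finite motion, which is resolved by combining the surjectivity of $\mathrm{ev}$ on $\mathfrak{h}$ with continuity of evaluation to make $\mathrm{ev}(\mathfrak{g})$ full, and then the orbit theorem to convert the spanning of bracket directions into openness of the orbit.
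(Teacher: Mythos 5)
Your argument is correct, but it runs along a genuinely different track than the paper's. The paper disposes of this corollary (via Corollary \ref{cor-inftrans}) in one line by reducing to the standard smooth-case proof: choose disjoint Runge neighborhoods of the $m$ points biholomorphic to balls in the regular part, realize the desired motion by a symplectic isotopy of these balls, and approximate it by a global symplectic automorphism using the singular Anders\'en--Lempert theorem (Theorem \ref{thm-AL}), followed by the usual correction to turn approximate into exact interpolation. You instead use the orbit-theorem mechanism: surjectivity of the evaluation of all Hamiltonian fields at $p_1,\dots,p_m$ (by Cartan interpolation of $1$-jets of the Hamiltonian function), density from Theorem \ref{thm-main-1} to transfer this to the Lie algebra generated by the complete generators, Sussmann's theorem to get open orbits in the configuration space, and connectedness of $\mathrm{Conf}_m(M_{\mathrm{reg}})$. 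This buys you a self-contained proof that bypasses Theorem \ref{thm-AL} and the exactness correction entirely, at the cost of invoking the orbit theorem; the paper's route is shorter on the page and reuses machinery it needs anyway. Two small points you should make explicit: the definition of the Hamiltonian density property in this paper concerns fields vanishing on $M_{\mathrm{sing}}$, so when interpolating the $1$-jets of $H$ at the $p_i$ you should additionally prescribe vanishing (to order $\ell$) at the finitely many singular points, which is harmless since they are disjoint from the $p_i$; and Sussmann's theorem gives that the orbit's tangent space \emph{contains} the evaluation of the generated Lie algebra (not that it equals it), which is all you need for openness.
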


\begin{corollary}
The group of holomorphic symplectic automorphisms of $M$ is infinite-dimensional.
\end{corollary}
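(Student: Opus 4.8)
The plan is to obtain infinite-dimensionality as a formal consequence of the $m$-transitivity established in the previous corollary, by playing the group off against the configuration spaces of the regular part $M_{\mathrm{reg}}$ of $M$. Suppose, towards a contradiction, that the group $G$ of holomorphic symplectic automorphisms of $M$ were a finite-dimensional complex Lie group, say $\dim_{\CC} G = N < \infty$. Fix $m \in \NN$ and consider the action of $G$ on the configuration space
\[
F_m(M_{\mathrm{reg}}) := \{ (p_1, \dots, p_m) \in (M_{\mathrm{reg}})^m \,:\, p_i \neq p_j \text{ for all } i \neq j \},
\]
which is an open subset of $(M_{\mathrm{reg}})^m$ and hence a complex manifold of complex dimension $2m$, since $M_{\mathrm{reg}}$ is a complex surface.

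By the previous corollary, $G$ acts transitively on $F_m(M_{\mathrm{reg}})$. Thus the orbit map $G \to F_m(M_{\mathrm{reg}})$ through any fixed base point is surjective. For a finite-dimensional Lie group acting holomorphically, every orbit is an immersed submanifold of dimension at most $\dim_{\CC} G$; surjectivity therefore forces $N = \dim_{\CC} G \geq \dim_{\CC} F_m(M_{\mathrm{reg}}) = 2m$. As $m \in \NN$ was arbitrary, this yields $N \geq 2m$ for every $m$, contradicting $N < \infty$. Hence $G$ cannot be finite-dimensional.

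I expect the only subtlety to lie in making precise the sense in which $G$, endowed with the compact-open topology, is to be compared against finite-dimensional Lie groups, so that the orbit-dimension inequality is legitimate; this bookkeeping is standard in Andersén--Lempert theory. Alternatively, and perhaps more transparently, one can argue at the level of Lie algebras: the symplectic density property (Theorem \ref{thm-main-2}) guarantees infinitely many $\CC$-linearly independent complete holomorphic symplectic vector fields on $M_{\mathrm{reg}}$, each integrating to a one-parameter subgroup of $G$, whereas a finite-dimensional $G$ could accommodate only finitely many linearly independent infinitesimal generators. Either route closes the argument, and no genuine computation beyond the dimension count is required, so this statement is essentially an immediate formal corollary of the transitivity result.
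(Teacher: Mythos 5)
Your proposal is correct. Note that the paper states this corollary without any proof at all, treating it as immediate from what precedes it, so there is no official argument to compare against; your write-up supplies the missing justification. Of your two routes, the second (Lie-algebra) one is the more robust: in fact it does not even need the full symplectic density property, since Lemma \ref{lemma-completeness} and its corollary already exhibit the infinite linearly independent family of complete Hamiltonian vector fields with Hamiltonians $z^k$, $k \in \NN$ (equivalently $f(z)V^z$ for polynomial $f$), each integrating to a one-parameter subgroup, which no finite-dimensional Lie group of transformations acting effectively could accommodate. The first route via transitivity on configuration spaces also works, but the ``subtlety'' you flag is a real one: to get the orbit-dimension inequality you need a Bochner--Montgomery-type theorem upgrading a continuous action of a hypothetical finite-dimensional $G$ to a smooth (indeed holomorphic) one, so that Sard's theorem rules out a surjective orbit map onto the $2m$-dimensional configuration space; once that is invoked the argument closes. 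Either way the statement follows, and your assessment that it is an essentially formal corollary is accurate.
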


\begin{theorem}
\label{thm-groupdesc}
The group generated by the complete flows of the vector fields corresponding to the Hamiltonian functions $x^k, y^k, z^k$ $(k \in \NN)$ is dense (in the topology of locally uniform convergence) in the identity path-component of the group of holomorphic symplectic automorphisms.
\end{theorem}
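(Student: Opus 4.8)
The plan is to obtain Theorem~\ref{thm-groupdesc} as the ``infinitesimal $\Rightarrow$ global'' counterpart of the Hamiltonian density property (Theorem~\ref{thm-main-1}), via the singular symplectic Anders\'en--Lempert theorem established in the paper. Write $G$ for the group generated by the flows in question and $\mathrm{Aut}_\omega(M_{\mathrm{reg}})_0$ for the identity path-component.

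First I would dispose of the inclusion $G \subseteq \mathrm{Aut}_\omega(M_{\mathrm{reg}})_0$. Writing $X_{x^k}$ for the Hamiltonian field of $x^k$, one has $\iota_{X_{x^k}}\omega = d(x^k)=k\,x^{k-1}\,dx = \iota_{k\,x^{k-1}X_x}\omega$, hence $X_{x^k}=k\,x^{k-1}X_x$ (and similarly for $y,z$). Since $x$ is a first integral of $X_x$, the coefficient $k\,x^{k-1}$ is constant along each flow line, so $X_{x^k}$ is a constant-speed reparametrisation of $X_x$ along the conic fibres $\{x=\mathrm{const}\}\cap M$; as $X_x$ generates the complete action used in the proof of Theorem~\ref{thm-main-1}, each $X_{x^k}$ is complete. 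Thus every flow $\exp(tX_{x^k})$ is a holomorphic symplectic automorphism joined to $\id$ by the flow itself, and $G$ lies in the identity path-component.

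For the reverse, dense inclusion I would take $\phi\in\mathrm{Aut}_\omega(M_{\mathrm{reg}})_0$, a path $(\phi_t)_{t\in[0,1]}$ of symplectic automorphisms from $\id$ to $\phi$, a compact $K\subseteq M_{\mathrm{reg}}$, and $\varepsilon>0$, and apply the symplectic Anders\'en--Lempert theorem to the isotopy $\phi_t$. Its infinitesimal generator $V_t=\dot\phi_t\circ\phi_t^{-1}$ is a time-dependent holomorphic symplectic field; partitioning $[0,1]$ finely, freezing $V_t$ on each subinterval, and approximating each resulting short-time local flow by finite compositions of flows of fields in the Lie algebra generated by $\{X_{x^k},X_{y^k},X_{z^k}\}$ (dense by Theorem~\ref{thm-main-1}) produces symplectic automorphisms converging to $\phi$ uniformly on $K$. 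The passage from ``dense Lie algebra'' to ``compositions of generator flows'' is the standard commutator/Euler calculus, e.g.\ $\exp\!\big(s[X,Y]\big)=\lim_{n}\big(\exp(\sqrt{s/n}\,X)\exp(\sqrt{s/n}\,Y)\exp(-\sqrt{s/n}\,X)\exp(-\sqrt{s/n}\,Y)\big)^{n}$ and $\exp\!\big(s(X+Y)\big)=\lim_n\big(\exp(\tfrac sn X)\exp(\tfrac sn Y)\big)^{n}$, with the Anders\'en--Lempert estimate controlling the accumulated error on $K$; the resulting automorphisms lie in $G$, giving $\phi\in\overline G$.

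The step I expect to be the genuine obstacle is reconciling \emph{symplectic} with \emph{Hamiltonian}. Because the boundary divisor of $M$ is a cycle of rational curves, $H^1_{\mathrm{dR}}(M_{\mathrm{reg}};\CC)$ is nonzero (a one-dimensional space coming from the boundary cycle), so closed holomorphic $1$-forms need not be exact: the generator $V_t$ above is only symplectic, whereas every element of $G$ is Hamiltonian. One must therefore show that the Hamiltonian automorphisms are nonetheless dense, in the locally-uniform topology, in the full symplectic identity path-component. The route I would pursue is that the flux obstruction can be pushed off any prescribed compact: for each $\holo(M_{\mathrm{reg}})$-convex compact $K$ one uses the abundance of automorphisms furnished by Theorem~\ref{thm-main-2} to produce symplectic isotopies that are uniformly close to the identity on $K$ yet realise an arbitrary flux value, so that after correcting $\phi_t$ by such an isotopy its generator becomes Hamiltonian near $K$ and the density property of Theorem~\ref{thm-main-1} applies there. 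Settling this point—equivalently, that the flux homomorphism has image whose closure absorbs the one missing cohomological direction—is the heart of the proof; granting it, the argument above yields that $\overline G$ is all of $\mathrm{Aut}_\omega(M_{\mathrm{reg}})_0$.
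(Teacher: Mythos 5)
Your overall strategy---run the singular symplectic Anders\'en--Lempert theorem along the isotopy $\varphi_t$ joining $\id$ to $F$, freeze the time-dependent generator on short subintervals, and approximate by Lie/Poisson combinations of the generators $x^k,y^k,z^k$ whose span is dense by Theorem~\ref{thm-main-1}---is exactly the paper's route, and your first inclusion ($X_{x^k}=k\,x^{k-1}X_x$ is a shear of the complete field $X_x$, hence complete) is correct. The problem is the step you yourself flag as ``the heart of the proof.'' You assert that $H^1_{\mathrm{dR}}(M_{\mathrm{reg}};\CC)\neq 0$ because the boundary divisor of the projective completion is a cycle of rational curves, and you then propose (but do not carry out) a flux-correction argument to bridge the gap between symplectic and Hamiltonian fields. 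This premise is false. A circular dual graph of the boundary does not force $H^1$ of the complement to be nonzero; that depends on the classes of the boundary components in $H^2$ of the completion. For the Markov(-type) surface the paper invokes Cantat--Loray to the effect that $M$ is simply connected, and then uses Reiffen's singular Poincar\'e lemma (applicable because the $A_k$ and $D_4$ singularities are quasi-homogeneous, by Saito) together with the formal de Rham argument to conclude $H^1_{d}(M)=0$. Hence every holomorphic symplectic vector field on $M$ is already Hamiltonian (Lemma~\ref{lemma-deRhamtrivial} with $\Omega=M$ and $\ell=1$, the latter justified by Corollary~\ref{cor-Markov-spanning}), and Theorem~\ref{thm-AL} applies directly with $\Omega=M$. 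You appear to be modelling the situation on examples like $\CC^\ast\times\CC^\ast$, where the boundary cycle genuinely contributes to $H^1$; the Markov surface behaves differently.

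Concretely, then: your proof is incomplete because the flux argument is only a sketch, and it cannot be completed as stated because the cohomology class it is meant to absorb does not exist. The correct repair is not to construct isotopies realising arbitrary flux, but to prove $H^1_d(M)=0$ (simple connectivity plus the singular Poincar\'e lemma for quasi-homogeneous hypersurface singularities). Once that is in place, your remaining argument collapses to the paper's two-line proof: choose $\Omega=M$ in Theorem~\ref{thm-AL}, note that the Hamiltonians $1,x^k,y^k,z^k$ generate a Poisson--Lie algebra dense in $\holo(M)$, and discard the constant Hamiltonian $1$ since its flow is the identity.
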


The explicit form of the $1$-parameter family of automorphisms corresponding to these Hamiltonian functions $x^k, y^k, z^k$ is computed in Lemma \ref{lemma-completeness}.

\bigskip

We now specialize again to the original Markov surface. By definition, a closed discrete subset $A$ is called \emph{tame} if every injective self-map $A \to A$ can be interpolated by a holomorphic automorphism, see Section \ref{sec-tame} for more details.

\begin{theorem}
\label{theorem-tame}
The ordered Markov triples form a tame subset of the Markov surface $x^2 + y^2 + z^2 = 3xyz$ w.r.t.\ its group of holomorphic symplectic automorphisms. 
\end{theorem}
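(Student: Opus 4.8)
The plan is to combine the tameness machinery (which I assume reduces tameness to two ingredients: density/Andersén–Lempert approximation, already available via Theorems \ref{thm-main-1}--\ref{thm-main-2}, and a suitable discreteness/``rigidity'' property of the set together with enough automorphisms to move finite subsets freely) with the explicit combinatorial structure of the Markov triples governed by the Markov tree. Write $A$ for the set of ordered Markov triples inside the Markov surface $M_0 = \{x^2+y^2+z^2 = 3xyz\}$. First I would record that $A$ is indeed a closed discrete subset of the regular part of $M_0$: every Markov number is a positive integer and, by the Markov tree description recalled in the introduction, the triples $(x,y,z) \in \NN^3$ accumulate only at infinity, so $A$ has no accumulation point in $M_0$; I should also check that each Markov triple lies in the smooth locus, which holds because the only singularity of $M_0$ is at the origin and $(0,0,0)$ is not a Markov triple.

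The core of the argument is to realize an arbitrary injective self-map $\varphi \colon A \to A$ by a holomorphic symplectic automorphism. The standard approach for tameness (à la Rosay--Rudin and its equivariant/symplectic refinements) is an inductive exhaustion: enumerate $A = \{a_1, a_2, \dots\}$ by, say, the max-coordinate ordering coming from the Markov tree, and construct a sequence of symplectic automorphisms $\Phi_n$ that already send $a_i \mapsto \varphi(a_i)$ for $i \le n$ and converge locally uniformly. At each step I must produce a symplectic automorphism of $M_0$ that moves a prescribed finite point configuration to another prescribed finite configuration while staying close to the identity away from the relevant points — this is exactly what the $m$-transitivity corollary together with the Andersén--Lempert / Runge approximation for symplectic injections (the symplectic and Hamiltonian density property) provides. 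The point is that finite interpolation data can be joined by a symplectic isotopy through the open-orbit structure, and then the isotopy is approximated by automorphisms; the symplectic density property guarantees the approximating maps preserve $\omega$.

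The step I expect to be the genuine obstacle is controlling the convergence and injectivity of the infinite composition $\Phi = \lim_n \Phi_n$, i.e.\ arranging the estimates so that the automorphisms stabilize on larger and larger compacta and so that no two points of $A$ are merged in the limit. Concretely I would, at stage $n$, fix the already-placed images on a large ball $B_n \Subset M_0$ and only perturb outside a neighborhood of $B_n$, choosing the support of the next correction to avoid $\Phi_{n-1}(a_i)$ for $i < n$; the rapid (super-exponential) growth of Markov numbers along the tree is what makes this geometric separation feasible, since the points $\varphi(a_i)$ spread out to infinity and can be isolated in pairwise disjoint coordinate regions. A secondary subtlety is that $\varphi$ need only be injective, not bijective, so I cannot simply build an automorphism as a limit of ``swaps''; I must instead handle the possibly non-surjective case by pushing the ``unused'' target points off to infinity, which the infinite-dimensionality of the automorphism group (and the freedom in the approximation) accommodates. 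Once the limit automorphism $\Phi$ is shown to exist, be symplectic, and satisfy $\Phi|_A = \varphi$, tameness of $A$ follows by definition.
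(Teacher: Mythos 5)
There is a genuine gap. Your plan is to build $\Phi$ as a locally uniform limit of finite compositions $\Phi_n$ obtained from $m$-transitivity and Anders\'en--Lempert approximation, with convergence controlled by perturbing only far from an exhausting sequence of compacts. The fundamental obstruction is that such a limit is in general only an injective holomorphic map onto a proper subdomain of $M$ (a Fatou--Bieberbach domain), not an automorphism; making the limit surjective requires simultaneous control of the inverse maps, and more importantly, the strategy as described uses nothing about the Markov triples beyond their being a closed discrete subset of $M_{\mathrm{reg}}$ tending to infinity. Since every Stein manifold contains non-tame closed discrete sets (Winkelmann, cited in Section \ref{sec-tame}), an argument that invokes only discreteness, $m$-transitivity and the density property cannot prove tameness: some concrete separating property of the Markov triples must enter, and your appeal to the ``super-exponential growth along the tree'' is never actually used in a way that an arbitrary discrete set would fail to satisfy. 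This is precisely why the notion of tameness is nontrivial and why the theorem is not a formal corollary of Theorems \ref{thm-main-1} and \ref{thm-main-2}.

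The paper's proof is instead an explicit shear-type construction, closer in spirit to the Rosay--Rudin argument that a discrete set in $\CC^2$ with injective, proper first-coordinate projection is tame. One writes $F = G^{-1} \circ \varphi^x_{h(x)} \circ \varphi^y_{g(y)} \circ G$ with $G = \varphi^z_{f(z)}$, where the ``times'' $f,g,h$ are entire functions of the flow-invariant coordinate, prescribed on the (pairwise distinct, non-accumulating) relevant coordinates by Mittag-Leffler interpolation. The needed geometric inputs are: Lemma \ref{lemma-faraway}, which lets one choose $f$ so that after applying $G$ all the $x$- and $y$-coordinates of the images become pairwise distinct without accumulation (here the finiteness of ordered Markov triples with a given $z$, Remark \ref{rem-finitelymany}, and the integrality of the coordinates are used); and Lemma \ref{lemma-surjective}, which says each flow $\CC \ni t \mapsto \varphi^z_t(p)$ surjects onto the fiber of the conic over $z$, so that the interpolation targets are actually attainable. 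Your proposal would need to be replaced by, or supplemented with, an argument of this kind that isolates a usable coordinate-separation property of the Markov triples; the correct preliminary observations you do make (the triples avoid the singular point at the origin and form a closed discrete set) are necessary but far from sufficient.
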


\begin{corollary}
The Markov surface contains \emph{Fatou--Bieberbach domains}, i.e.\ there exists a biholomorphic map $F \colon M \to \Omega$ where $\Omega \subsetneq M$ is an open subset. 
\end{corollary}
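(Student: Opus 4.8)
The plan is to produce an injective holomorphic self-map $F \colon M \to M$ whose image omits at least one point; setting $\Omega := F(M)$ then yields the desired biholomorphism $F \colon M \to \Omega$ with $\Omega \subsetneq M$. I would build $F$ as a locally uniform limit of holomorphic symplectic automorphisms $F_n = \phi_n \circ \dots \circ \phi_1$, arranging that the preimage of a fixed point $p \in M$ is pushed out to infinity, so that $p$ acquires no $F$-preimage. It is worth noting at the outset that the classical route to Fatou--Bieberbach domains via an attracting fixed point is unavailable here: in complex dimension two the symplectic form is a volume form, so the automorphisms furnished by Theorem \ref{thm-main-2} are volume-preserving and have Jacobian of modulus one at every fixed point. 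The omitted value must therefore be created purely by transporting a point to infinity rather than by contraction.

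For the construction I would fix a normal exhaustion $K_1 \subset K_2 \subset \dots$ of $M$ by $\holo(M)$-convex compact sets, take the point to be omitted to be a Markov triple $p = a_1$ lying in the interior of $K_1$, and enumerate the remaining Markov triples $A = \{a_1, a_2, \dots\}$, which by Theorem \ref{theorem-tame} form a tame subset. The inductive step at stage $n$ is to use tameness --- equivalently, the Andersen--Lempert approximation that follows from the symplectic density property --- to produce a symplectic automorphism $\phi_n$ that is $\varepsilon_n$-close to the identity on $F_{n-1}(K_{n-1})$, while transporting the current preimage $F_{n-1}^{-1}(p)$ beyond the exhaustion level $K_{n+1}$. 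Because $A$ is tame and infinite, there is always enough room to relocate a point past the next exhaustion level while leaving a prescribed large compact set essentially fixed; this flexibility is exactly what tameness provides and is the reason the corollary is placed here.

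The final step is the convergence analysis. Choosing the errors $\varepsilon_n$ to decrease sufficiently fast relative to the injectivity estimates accumulated for $F_{n-1}$, the standard Andersen--Lempert convergence lemma yields that $F = \lim_n F_n$ exists, is holomorphic and non-degenerate, and is injective on all of $M$. By construction the preimages $F_n^{-1}(p)$ leave every compact subset of $M$, whence $p \notin F(M)$ and $\Omega := F(M) \subsetneq M$; thus $F \colon M \to \Omega$ is a biholomorphism, i.e.\ a Fatou--Bieberbach domain. The main obstacle is the simultaneous bookkeeping: one must keep $F_n$ Cauchy on every compact set, which forces $\phi_n$ ever closer to the identity on ever-larger sets, while still genuinely driving the preimage of $p$ to infinity, and one must verify that injectivity and non-degeneracy survive passage to the limit --- the delicate ``close enough on $K_n$ implies globally injective'' estimate --- all under the constraint that every $\phi_n$ be volume-preserving, so that no contraction is available to assist the construction.
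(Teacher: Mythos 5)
Your outline is, in essence, the well-known push-to-infinity construction that the paper's proof simply cites from Forstneri\v{c}'s book: a locally uniform limit $F=\lim_n \phi_n\circ\dots\circ\phi_1$ of symplectic automorphisms, each $\varepsilon_n$-close to the identity on the image of the previous exhaustion level, arranged so that the limit injection is not surjective. The paper's one-line argument goes through the existence of tame \emph{and} non-tame sets; your single-point variant is actually leaner, since it needs neither Theorem \ref{theorem-tame} nor non-tame sets, but only the transitivity and the Anders\'en--Lempert approximation of Theorem \ref{thm-AL}. Your observation that the attracting-basin route is blocked by volume preservation is correct and worth making. Two points need repair, though. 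First, the flexibility you need at each stage --- moving one point out of a large compact while staying $\varepsilon_n$-close to the identity on a smaller $\holo(M)$-convex compact not containing it --- is \emph{not} ``exactly what tameness provides''; tameness only gives interpolation on a fixed discrete set and plays no role in your construction. What you actually need is Theorem \ref{thm-AL} applied to a two-component Runge configuration, a neighborhood of $F_{n-1}(K_{n-1})$ together with a small ball around $p$, with the isotopy equal to the identity on the first component (and on the singular point, as condition (4) there requires) and transporting the ball out of $F_{n-1}(K_{n+1})$; note also that since $F_n=\phi_n\circ F_{n-1}$, the correct requirement is $\phi_n^{-1}(p)\notin F_{n-1}(K_{n+1})$, i.e.\ you move $p$ itself out of the image compact rather than ``transporting the preimage'', which lives on the source side where $\phi_n$ does not act. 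Second, ``the preimages $F_n^{-1}(p)$ leave every compact, whence $p\notin F(M)$'' is not a valid implication by itself: $F_n(q)\neq p$ for all $n$ does not prevent $F_n(q)\to p$. You must carry a quantitative margin, say $\mathrm{dist}\bigl(F_n(K_n),p\bigr)\geq\delta_n$ with $\sum_{m>n}\varepsilon_m<\delta_n/2$, so that the avoidance survives in $F(K_n)$. With these standard repairs the argument is correct, and it gives a self-contained alternative to the citation the paper relies on.
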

\begin{proof}
This follows from the existence of tame and non-tame sets and a well-known construction of Forstneri\v{c}, see e.g.\ the proof of \cite{MR3700709}*{Theorem 4.17.1}.
\end{proof}

\begin{corollary}
Given any enumeration of ordered Markov triples, there exists a holomorphic symplectic automorphism of the Markov surface that maps each ordered Markov triple to its successor.
\end{corollary}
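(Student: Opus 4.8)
The plan is to derive this as a consequence of the tameness result, Theorem~\ref{theorem-tame}, which precedes it. The key observation is that the map sending each ordered Markov triple to its successor (in a fixed enumeration) is an \emph{injective} self-map of the set of ordered Markov triples. Indeed, for any enumeration $a_1, a_2, a_3, \dots$ of the ordered Markov triples, the successor map $\sigma \colon a_n \mapsto a_{n+1}$ is injective because $\sigma(a_n) = \sigma(a_m)$ forces $a_{n+1} = a_{m+1}$, hence $n+1 = m+1$ and $n = m$. So $\sigma$ is precisely the type of map that tameness lets us interpolate.

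First I would invoke Theorem~\ref{theorem-tame}, which asserts that the ordered Markov triples form a tame subset of the Markov surface with respect to its group of holomorphic symplectic automorphisms. By the definition of tameness recalled in the text, every injective self-map of a tame closed discrete set can be interpolated by a holomorphic (symplectic) automorphism of the ambient variety. Applying this to the successor map $\sigma$ produces a holomorphic symplectic automorphism $F$ of $M$ with $F(a_n) = a_{n+1}$ for all $n$, which is exactly the claimed statement.

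The one point requiring a small check is that the set of ordered Markov triples is genuinely closed and discrete in $M$, so that the notion of tameness applies and so that ``successor'' is well-defined under a given enumeration. Discreteness follows from the fact that Markov triples are integer solutions with entries tending to infinity along any enumeration (the Markov numbers grow without bound), so the triples accumulate only at infinity and form a closed discrete subset of the affine surface $M$; this is implicit in Theorem~\ref{theorem-tame} since tameness is only defined for such sets. I would note that the enumeration is arbitrary and the injectivity argument above is uniform across all enumerations.

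There is no serious obstacle here: the corollary is a direct specialization of the tameness theorem, and all the analytic content has already been established upstream. The only thing to verify is the elementary observation that a successor map of any enumeration of a countable set is injective, together with the routine remark that the ordered Markov triples constitute a closed discrete set to which Theorem~\ref{theorem-tame} applies.
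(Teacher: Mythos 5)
Your proposal is correct and matches the paper's argument exactly: the paper also derives the corollary directly from the definition of a tame set applied to the (injective) successor map, via Theorem~\ref{theorem-tame}. Your additional remarks on injectivity and closed discreteness are routine verifications that the paper leaves implicit.
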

\begin{proof}
This is a direct consequence of the definition of a tame set. 
\end{proof}

We conclude with two open questions:

\begin{question}
Can the dynamical properties of the holomorphic automorphism of the preceding corollary be related to the growth estimates of Markov numbers?
\end{question}

The growth of the Markov numbers is conjectured by Zagier \cite{MR0669663} to be $m_n = \frac{1}{3} \exp(C \sqrt{n} + o(1))$ with an exlicitly computable constant $C$ where $m_n$ stands for the $n$-th Markov number, enumerated in increasing order.

\begin{question} \hfill
\begin{enumerate} 
\item Are all holomorphic automorphisms of $M$ either symplectic or anti-symplectic? 
\item Can we describe all (path-)connected components of the group of holomorphic automorphisms?
\end{enumerate}
\end{question}

Note that this question about the group of holomorphic automorphisms is also still open for much ``easier'' examples such as $\CC^\ast \times  \CC^\ast$, see e.g.\ the recent survey by Forstneri\v{c} and Kutzschebauch \cite{MR4440754}.

\bigskip

This paper is organized as follows: In Section \ref{sec-newflows} we introduce complete algebraic vector fields with holomorphic flows, i.e.\ they generate $1$-parameter groups of holomorphic automorphisms. In Section \ref{sec-DP} we provide the theoretical framework for the Hamiltonian and symplectic density property, adjusted to the singular situation. In Section \ref{sec-singular} we determine and analyze the isolated singularities that occur for Markov type surfaces. We carry out these computations for all isolated singularities of type $A_k$ and $D_k$ since it might be of independent interest. In Section \ref{sec-main} we prove the density properties for the Markov-type surfaces. Finally, Section \ref{sec-tame} deals with the construction of tame sets. 

\section{New Complete Vector Fields}
\label{sec-newflows}
A key ingredient is the following Lemma which produces new $1$-parameter families of holomorphic automorphisms on the Markov-type surfaces. 
\begin{lemma}
\label{lemma-completeness}
All of the three tangential vector fields $V^x, V^y, V^z$ on $M$ are complete:
\[
\begin{alignedat}{4}
V^x &:= & & \phantom{+} (2z + Exy - C) \frac{\partial}{\partial y}& - (2y + Exz - B) \frac{\partial}{\partial z}& \\
V^y &:= & - (2z + Exy - C) \frac{\partial}{\partial x}& & + (2x + Eyz - A) \frac{\partial}{\partial z}& \\
V^z &:= & (2y + Exz - B) \frac{\partial}{\partial x}& - (2x + Eyz - A) \frac{\partial}{\partial y}& 
\end{alignedat}
\]
\end{lemma}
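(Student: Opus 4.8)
The plan is to exploit the fact that each field has a coordinate as a first integral and that its restriction to the corresponding slice is \emph{affine-linear}, hence manifestly complete. Write $F := x^2 + y^2 + z^2 + Exyz - Ax - By - Cz - D$ for the defining polynomial, with partials $F_x = 2x + Eyz - A$, $F_y = 2y + Exz - B$, $F_z = 2z + Exy - C$, so that $V^x = F_z\,\frac{\partial}{\partial y} - F_y\,\frac{\partial}{\partial z}$ and analogously (up to signs) for $V^y, V^z$. First I would record that these fields are tangential to $M$: applying $V^x$ to $F$ gives the polynomial identity $F_z F_y - F_y F_z = 0$, and the same cancellation occurs for $V^y$ and $V^z$. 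In fact each $V^{\bullet}$ is, up to sign, the cross product $\nabla F \times \nabla(\bullet)$, i.e.\ the Hamiltonian field of the coordinate $\bullet$ for the natural symplectic form.

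The key observation is that $V^x$ has no $\frac{\partial}{\partial x}$ component, so $V^x(x) = 0$ and its flow preserves each affine slice $\{x = x_0\}$. On such a slice $M$ becomes the conic $\{Q = 0\}$ with $Q(y,z) := F(x_0,y,z) = y^2 + z^2 + Ex_0 yz - By - Cz + (x_0^2 - Ax_0 - D)$, and $V^x$ restricts to the planar field
\[
W = (Ex_0 y + 2z - C)\,\frac{\partial}{\partial y} - (Ex_0 z + 2y - B)\,\frac{\partial}{\partial z},
\]
which is affine-linear in $(y,z)$: its linear part is the trace-zero matrix $\left(\begin{smallmatrix} Ex_0 & 2 \\ -2 & -Ex_0 \end{smallmatrix}\right)$ together with the constant vector $(-C,B)$. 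Since $Q = F|_{x=x_0}$, the computation $W(Q) = F_y F_z - F_z F_y = 0$ shows that every level set of $Q$, in particular the conic $\{Q=0\}$, is invariant under $W$.

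Now an affine-linear vector field on $\CC^2$ is complete: its flow has the form $\exp(tL)$ applied to the initial data plus a particular solution, an \emph{entire} function of $t \in \CC$, so no trajectory escapes to infinity in finite time. I would compute $\exp(tL)$ explicitly — the eigenvalues are $\pm\sqrt{E^2 x_0^2 - 4}$, so the entries are trigonometric/hyperbolic in $t$ (polynomial in the degenerate case $E^2 x_0^2 = 4$) — thereby recording the promised explicit $1$-parameter family. Restricting this complete flow to the invariant conic $M \cap \{x = x_0\}$ and letting $x_0$ vary yields a globally defined holomorphic flow of $V^x$ on $M$; the singular points of $M$, where $\nabla F = 0$ and hence $V^x = V^y = V^z = 0$, are merely fixed points and cause no trouble. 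The identical argument with the coordinates permuted gives completeness of $V^y$ (with $y$ as first integral, affine-linear in $(x,z)$) and of $V^z$.

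There is no essential obstacle here: the entire content is the reduction to an affine-linear planar field, for which completeness is automatic. The only points requiring care are verifying genuine tangency of the restricted field to the conic, which is settled by $W(Q)=0$ above, and carrying out the matrix-exponential computation cleanly across the degenerate eigenvalue locus $x_0 = \pm 2/E$ so that the resulting formulas extend holomorphically in the parameter $x_0$.
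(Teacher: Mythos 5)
Your proposal is correct and follows essentially the same route as the paper: reduce to a single field by symmetry, use the preserved coordinate as a parameter, and observe that the restriction to each slice is an inhomogeneous linear constant-coefficient system whose flow is entire in $t$ (the paper writes the solution explicitly with $\gamma(z)=\sqrt{4-E^2z^2}$ and notes the branch ambiguities and apparent poles cancel, exactly the degenerate-locus point you flag at the end). The only cosmetic difference is that you work with $V^x$ where the paper works with $V^z$.
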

\begin{proof}
The tangent space to $M$ in $(x,y,z) \in M$ is the kernel of:
\[
N = (2x + Eyz - A) dx + (2y + Exz - B) dy + (2z + Exy - C) dz
\]
It is now straightforward that $V^x$, $V^y$ and $V^z$ are tangential vector fields.

The defining equation of $M$ is symmetric with respect to simultaneous permutations of the variables 
$(x,y,z)$ and the coefficients $(A,B,C)$, hence we only need to consider the vector field $V^z$ and its flow \[\varphi^z_t(x,y,z) = (x(t), y(t), z(t)).\]

Since $z(t) = z$ is constant in time, we are led to a linear system of ODEs in $(x,y)$ with constant coefficients, depending on a parameter $z$:
\[
\begin{pmatrix}
\dot{x} \\ \dot{y}
\end{pmatrix}
=
\underbrace{
\begin{pmatrix}
zE & 2 \\
-2 & -Ez
\end{pmatrix}}_{=: Q(z)}
\cdot
\begin{pmatrix}
x \\ y
\end{pmatrix}
+
\begin{pmatrix}
-B \\ A
\end{pmatrix}
\]
The characteristic polynomial of $Q(z)$ is $\lambda \mapsto \lambda^2 + 4 - E^2 z^2$.
For the inhomogeneous system we then obtain by a standard calculation
\begin{align*}
x(t) &= \left(x - \frac{2 A - B E z}{\gamma^2(z)} \right) \cos (\gamma(z) t) \\ &\quad + (2y + Exz - B) \frac{\sin (\gamma(z) t)}{\gamma(z)} + \frac{2 A - B E z}{\gamma^2(z)} \\
y(t) &= (2x + Eyz - A)\frac{\sin (\gamma(z) t)}{-\gamma(z)} \\ &\quad + \left(y - \frac{2 B - A E z}{\gamma^2(z)}\right) \cos (\gamma(z) t) + \frac{2 B - A E z}{\gamma^2(z)}
\end{align*}
where $\gamma(z) := \sqrt{4 - E^2 z^2}$, i.e.\ $\gamma^2(z) = - \lambda^2$. Any ambiguities introduced by the complex square root cancel out, as can be seen by the power series development of $\sin$ and $\cos$. The singularities introduced by $\gamma^2(z)$ in the denominator cancel out as well.
\end{proof}

\begin{remark}
The complex root $\gamma(z) = \sqrt{4 - E^2 z^2}$ appears in the proof naturally since $4 - E^2 z^2$ is the determinant of the matrix describing the homogeneous part of the system of ODEs that we solve. For the Markov surface, i.e.\ with $E = -3$, this term also shows up in the original comparison of the Markov spectrum and the Lagrange spectrum by Markov \cite{MR4788527}:
\[
\mathcal{L}_{< 3} = \left\{ \frac{\sqrt{9 z^2 - 4}}{z} \,:\, z \in \mathcal{M} \right\}
\] 
where $\mathcal{M}$ denotes the set of Markov numbers, and $\mathcal{L}_{< 3}$ the Langrange spectrum below $3$. The notation used here follows the textbook of Aigner \cite{MR3098784}*{Section 2.2}.
\end{remark}

\begin{corollary}
Let $f$ be an entire holomorphic function of one variable. Then the vector fields
\[
f(x) V^x, \quad f(y) V^y, \quad f(z) V^z
\]
are complete.
\end{corollary}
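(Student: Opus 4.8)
The plan is to exploit a feature visible in the proof of Lemma~\ref{lemma-completeness}: each of $V^x, V^y, V^z$ leaves one coordinate invariant. For concreteness I focus on $V^z$. Since $V^z = (2y + Exz - B)\frac{\partial}{\partial x} - (2x + Eyz - A)\frac{\partial}{\partial y}$ has no $\frac{\partial}{\partial z}$-component, the function $z$ is a first integral, i.e.\ $V^z(z) = 0$, so that $z$ is constant along every integral curve of $V^z$. Consequently $f(z)$ is constant along each orbit for any entire $f$. The multiplier $f(z)$ is therefore not an arbitrary function but precisely one that is conserved by the flow, and this is exactly what will make the scaled field complete.

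First I would write $\varphi^z_t$ for the complete holomorphic flow of $V^z$ produced in Lemma~\ref{lemma-completeness}, defined and holomorphic for all $(t,p) \in \CC \times M$ by the explicit formulas there. I would then define the candidate flow of $f(z)\, V^z$ by reparametrizing time along each orbit:
\[
\psi_t(x,y,z) := \varphi^z_{f(z)\, t}(x,y,z).
\]
Because $\varphi^z$ is holomorphic in both arguments and $f(z)$ is holomorphic on $M$, the map $\psi_t$ is holomorphic in $(t,x,y,z)$ and defined for all $t \in \CC$, with $\psi_0 = \id$.

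The key step is to verify that $\psi_t$ really is the flow of $f(z)\, V^z$. Differentiating and applying the chain rule gives
\[
\frac{d}{dt}\psi_t(p) = f(z(p))\, V^z\bigl(\varphi^z_{f(z(p))t}(p)\bigr) = f(z(p))\, V^z(\psi_t(p)).
\]
Here I would invoke the first-integral property: since $z$ is constant along the $V^z$-orbit through $p$, and $\psi_t(p)$ lies on that orbit, we have $z(\psi_t(p)) = z(p)$ and hence $f(z(p)) = f(z(\psi_t(p)))$. Substituting this identity rewrites the right-hand side as $f(z(\psi_t(p)))\, V^z(\psi_t(p)) = \bigl(f(z)\, V^z\bigr)(\psi_t(p))$, so $\psi_t$ satisfies the defining ODE of the flow of $f(z)\, V^z$ with the correct initial condition. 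As $\psi_t$ exists for all complex time, $f(z)\, V^z$ is complete. The statements for $f(x)\, V^x$ and $f(y)\, V^y$ then follow by the same argument, or directly from the symmetry of $M$ under simultaneous permutation of $(x,y,z)$ and $(A,B,C)$ already used in the proof of Lemma~\ref{lemma-completeness}.

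I expect no genuine obstacle here: the single point requiring care is the equality $z(\psi_t(p)) = z(p)$, which is precisely the first-integral property. It is worth emphasizing in the write-up that this is what distinguishes the special multiplier $f(z)$ from an arbitrary function, since multiplying a complete field by a non-conserved function would in general destroy completeness.
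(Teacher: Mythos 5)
Your argument is correct and is essentially the paper's own proof: the paper likewise reduces to $f(z)V^z$ by symmetry, notes that $V^z(z)=0$ makes $f(z)$ a first integral, and exhibits the flow as $\varphi^z_{f(z)t}$. You merely spell out the chain-rule verification that the paper leaves implicit in the standard ``shear vector field'' argument.
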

\begin{proof}
By symmetry, it is enough to consider the vector field $f(z) V^z$. Since $V^z(z) = 0$ we have $z \mapsto f(z) \in \ker V^z$. Therefore the shear vector field $f(z) V^z$ is complete as well. Its flow is given by $\varphi^z_{f(z) t}$. 
\end{proof}

\begin{remark}
\label{rem-spanning}
The vector fields $V^x, V^y, V^z$ span the tangent space in every smooth point of $M$: Arranging the coefficients of the respective vector fields in a matrix,
\[
\begin{pmatrix}
0 & 2z + Exy - C & - (2y + Exz - B) \\
- (2z + Exy - C) & 0 & 2x + Eyz - A \\
2y + Exz - B & -(2x + Eyz - A) & 0
\end{pmatrix} \cdot
\begin{pmatrix}
\displaystyle \frac{\partial}{\partial x} \\[12pt] \displaystyle \frac{\partial}{\partial y} \\[12pt] \displaystyle \frac{\partial}{\partial z} 
\end{pmatrix}
\]
we see that the only points where they do not span the tangent space, must satisfy:
\[
2x + Eyz - A = 0, \; 2y + Exz - B = 0, \; 2z + Exy - C = 0 
\]
However, these are also the coefficients of the $1$-form $N$ whose kernel is the tangent space.

\end{remark}

\section{Hamiltonian and Symplectic Density Property}
\label{sec-DP}

Recall that a holomorphic vector field $V$ is called \emph{symplectic} if the Lie derivative of the symplectic form $\omega$ w.r.t.\ $V$ vanishes. 
By Cartan's homotopy formula \( 0 = \mathcal{L}_V \omega = d i_V \omega + i_V d \omega \), this is equivalent to saying that the form $i_V \omega$ is closed since $\omega$ is closed by assumption. A holomorphic vector field $V$ is called \emph{Hamiltonian} if $i_V \omega$ is exact, i.e.\ if there exists a holomorphic function $h$ -- called the \emph{Hamiltonian (function)} of $V$ -- such that $i_V \omega = dh$. If the holomorphic de Rham cohomology of the space vanishes, then every symplectic vector field is Hamiltonian.

The Hamiltonian vector fields naturally form a Lie algebra with the commutator $[.,.]$ as the Lie bracket. The Hamiltonian function of a commutator is computed as the \emph{Poisson bracket} $\{.,.\}$ of the corresponding Hamiltonian functions:
Assume that $i_V \omega = df$ and $i_W \omega = dg$ for functions $f$ and $g$. Then we have that
\[
i_{[V, W]} \omega = -d \{ f, g \}
\]

Since the Poisson bracket satisfies the Leibniz rule, it is completely determined by its values on the functions that generate the ring of functions. The Poisson bracket can also be computed directly from the symplectic form:
\begin{equation}
\label{eq-poisson-general}
\{ f, g \} = \omega(V, W)
\end{equation}

\bigskip

The Hamiltonian density property for Stein manifolds was recently introduced by Gaofeng Huang and the author \cite{CaloSymplo}. The notion of the Hamiltonian density property is itself a variation of volume density property and of the density property introduced by Varolin \cite{MR1829353}. For an overview of this theory, we refer to the recent survey of Forstneri\v{c} and Kutzschebauch \cite{MR4440754}. The singular case for the density property was treated in a paper by Kutzschebauch, Leuenberger and Liendo \cite{MR3320241}. However, singular versions for the volume density property or the symplectic/Hamiltonian density property do not yet exist.

\medskip

Since a symplectic form is only well-defined on the regular part of a complex space, also the notion of a Hamiltonian or symplectic vector fields is a priori only defined on the regular part. However, the Hamiltonian functions will extend holomorphically to the singularity set if it has at least codimension $2$ due to the Hartogs phenomenon and the Riemann removable singularity theorem.

\begin{definition}
Let $X$ be a normal reduced complex space equipped with a holomorphic symplectic form $\omega$ on its regular part $X_{\mathrm{reg}}$. Let $V$ be vector field on $X$, i.e.\ a holomorphic global section of its tangent sheaf.
\begin{enumerate}
\item $V$ is called \emph{symplectic} if $i_V \omega$ is a closed form on $X_{\mathrm{reg}}$. 
\item $V$ is called \emph{Hamiltonian} if $i_V \omega$ is an exact form on $X_{\mathrm{reg}}$.  
\end{enumerate}
\end{definition}

In general, can't expect that $i_V \omega$ extends holomorphically to the singular part of $X$.

\begin{definition}
Let $X$ be a normal reduced Stein space equipped with a holomorphic symplectic form $\omega$ on its regular part $X_{\mathrm{reg}}$.
\begin{enumerate}
\item We say that $(X, \omega)$ has the \emph{symplectic density property} if the Lie algebra generated by the complete holomorphic symplectic vector fields on $X$ vanishing on $X_{\mathrm{sing}}$ is dense (w.r.t.\ locally uniform convergence) in the Lie algebra of all holomorphic symplectic vector fields on $X$ vanishing on $X_{\mathrm{sing}}$.
\item We say that $(X, \omega)$ has the \emph{relative symplectic density property of order $\ell \in \NN$ w.r.t.\ an analytic subset $A \subset X$} if the Lie algebra generated by the complete holomorphic symplectic vector fields on $X$ vanishing on $A$ to order $\geq \ell$ is dense (w.r.t.\ locally uniform convergence) in the Lie algebra of all holomorphic symplectic vector fields on $X$ vanishing on $A$ to order $\geq \ell$.
\item We say that $(X, \omega)$ has the \emph{Hamiltonian density property} if the Lie algebra generated by the complete holomorphic Hamiltonian vector fields on $X$ vanishing on $X_{\mathrm{sing}}$ is dense (w.r.t.\ locally uniform convergence) in the Lie algebra of all holomorphic Hamiltonian vector fields on $X$ vanishing on $X_{\mathrm{sing}}$.
\end{enumerate}
\end{definition}

As a preparation to deal with the singularities, we first need three technical lemmas:

\begin{lemma}
\label{lemma-spanning}
Let $X$ be a normal reduced Stein space equipped with a holomorphic symplectic form $\omega$ on its regular part $X_{\mathrm{reg}}$. Then there exist finitely many Hamiltonian vector fields $\Theta^1, \dots, \Theta^N$ on $X$ such that for all $p \in X_{\mathrm{reg}}$ we have $\mathrm{span} \{ \Theta^1_p, \dots, \Theta^N_p \} = T_p X$. 
\end{lemma}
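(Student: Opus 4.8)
The plan is to exhibit the $\Theta^j$ as the Hamiltonian vector fields of a finite collection of globally defined holomorphic functions whose differentials separate tangent directions, and then to use nondegeneracy of $\omega$ to transport the resulting spanning property from the cotangent bundle to the tangent bundle. The one genuinely nontrivial point is to ensure that these vector fields, which a priori live only on $X_{\mathrm{reg}}$, extend to global sections of the tangent sheaf; this is where normality enters.

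First I would observe that, since $X$ is Stein, there exist finitely many holomorphic functions $f_1, \dots, f_n \in \holo(X)$ whose differentials span the cotangent space $T_p^* X$ at every regular point $p$; concretely, one fixes a closed holomorphic embedding $X \hookrightarrow \CC^n$ (which exists by the Remmert embedding theorem for finite-dimensional reduced Stein spaces) and takes for $f_1, \dots, f_n$ the restrictions to $X$ of the ambient coordinate functions. At any $p \in X_{\mathrm{reg}}$ the co-restriction $T_p^* \CC^n \to T_p^* X$ is surjective, and the ambient coordinate differentials span $T_p^* \CC^n$, so the $df_j|_p$ span $T_p^* X$.

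Next, on the regular part the symplectic form induces the musical isomorphism $\omega^\flat \colon V \mapsto i_V \omega$, which is a holomorphic vector bundle isomorphism $\mathcal{T}_{X_{\mathrm{reg}}} \to \Omega^1_{X_{\mathrm{reg}}}$ precisely because $\omega$ is nondegenerate. For each $j$ I define $\Theta^j$ on $X_{\mathrm{reg}}$ as the unique holomorphic vector field with $i_{\Theta^j} \omega = df_j$; it is by construction Hamiltonian with Hamiltonian function $f_j$. Since $\omega^\flat$ sends $\Theta^j_p$ to $df_j|_p$ and the $df_j|_p$ span $T_p^* X$, the inverse isomorphism shows that $\Theta^1_p, \dots, \Theta^n_p$ span $T_p X$ at every regular point $p$. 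This already establishes the spanning statement on $X_{\mathrm{reg}}$, with $N = n$.

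The hard part, and the only step needing more than linear algebra, is to promote each $\Theta^j$ from a vector field on $X_{\mathrm{reg}}$ to a global section of the tangent sheaf, so that it is a Hamiltonian vector field on $X$ in the sense of the definition above. Here I would invoke normality of $X$: the singular locus $X_{\mathrm{sing}}$ then has codimension $\geq 2$, and the tangent sheaf $\mathcal{T}_X = (\Omega^1_X)^\vee$, being the dual of a coherent sheaf, is reflexive. Reflexive sheaves on a normal space enjoy the Hartogs-type extension property $\mathcal{T}_X \cong j_* \bigl( \mathcal{T}_X|_{X_{\mathrm{reg}}} \bigr)$, where $j \colon X_{\mathrm{reg}} \hookrightarrow X$ is the inclusion of the complement of the codimension-$\geq 2$ set $X_{\mathrm{sing}}$. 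Consequently each $\Theta^j$ extends uniquely to a global holomorphic vector field on $X$; its contraction with $\omega$ still equals the exact form $df_j$ on $X_{\mathrm{reg}}$, so the extended field remains Hamiltonian. This completes the argument, and I expect the verification of reflexivity and the extension property to be the place demanding the most care.
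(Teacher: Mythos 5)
Your argument is correct in substance but follows a genuinely different route from the paper. The paper works locally: it invokes the holomorphic Darboux theorem at one regular point per component, interpolates the resulting local Hamiltonian coordinate functions to first order by global functions (Cartan's Theorem B), observes that the associated Hamiltonian fields then span on a Zariski-dense open set, and iterates on the analytic locus where spanning fails, terminating because the dimension drops at each step. You instead produce, in one shot, globally defined Hamiltonian functions whose differentials already span every regular cotangent space, and transport the spanning property through the musical isomorphism $V \mapsto i_V\omega$; no iteration is needed, and you get the explicit bound $N = n$. You also make explicit a point the paper leaves implicit, namely that the Hamiltonian fields defined a priori only on $X_{\mathrm{reg}}$ extend to global sections of the tangent sheaf; your appeal to normality, codimension $\geq 2$ of $X_{\mathrm{sing}}$, and reflexivity of $\mathcal{T}_X = (\Omega^1_X)^\vee$ is the standard and correct way to see this (equivalently, one can extend $V(f)$ across $X_{\mathrm{sing}}$ directly by the second Riemann extension theorem for normal spaces).

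One caveat on your first step: a reduced finite-dimensional Stein space need not admit a closed holomorphic embedding into any $\CC^N$, since its local embedding dimension at singular points may be unbounded; the Remmert--Narasimhan embedding theorem requires bounded embedding dimension. This is harmless for your purposes because you only need the $df_j$ to span $T_p^*X$ at \emph{regular} points, and Narasimhan's theorem does provide a proper injective holomorphic map into $\CC^{2n+1}$ that is an immersion at every smooth point; alternatively, for the affine varieties to which the paper actually applies the lemma, the embedding is given. You should adjust the citation accordingly, but the proof stands.
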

\begin{proof}
By the holomorphic analog of the Darboux theorem, see e.g.\ \cite{MR3705282}*{Theorem A.1}, for every point $p \in X_{\mathrm{reg}}$ there exist local holomorphic coordinates $(z_1, \dots, z_n, w_1, \dots, w_n)$ around $p$ such that $\omega = dz_1 \wedge dw_1 + \dots + dz_n \wedge dw_n$. Thus, the coordinate vector fields $\frac{\partial}{\partial z_1}, \dots, \frac{\partial}{\partial z_n}, \frac{\partial}{\partial w_1}, \dots, \frac{\partial}{\partial w_n}$ span $T_p X$. The corresponding Hamiltonian functions are $f^1_p = w_1, \dots, f^n_p = w_n, f^{n+1}_p = -z_1, \dots, f^{2n}_p = -z_n$, respectively. For every connected component $X_j$ of $X$ we choose such a point $p_j \in X_{j,\mathrm{reg}}$. Since $X$ is Stein, we can interpolate up to the first derivative each of the Hamiltonian functions $f^k_{p_j}$ in all points $p_j$ by a holomorphic function $F^k \colon X \to \CC$. The vector fields corresponding to $F_k$ will still be spanning $T_{p_j} X$ for all $p_j$ and thus be spanning in Zariski-open subsets of each $X_j$. We iterate this procedure on the analytic subvariety $A$ where the vector fields are not spanning. Since $A$ is at least of codimension $1$, this procedure terminates after finitely many steps. 
\end{proof}

\begin{example}
For the Markov-type surface $M$ we can choose the Hamiltonian vector fields $V^x, V^y, V^z$ that satisfy
$i_{V^x} \omega = d(-x)$ etc., see Remark \ref{rem-spanning}.
\end{example}

\begin{lemma}
\label{lemma-stalks}
Let $X$ be a normal reduced affine complex variety equipped with a holomorphic symplectic form $\omega$ on its regular part $X_{\mathrm{reg}}$. Then there exists $\ell \in \NN$ and there exist finitely many Hamiltonian vector fields $\Theta^1, \dots, \Theta^N$ such that every holomorphic vector field $\Xi$ on $X$ that vanishes on $X_{\mathrm{sing}}$ to at least order $\ell$ can be written as
\begin{equation}
\label{eq-globalsection}
\Xi = \sum_{k = 1}^N g_k \Theta^k
\end{equation}
for some holomorphic functions $g_k \colon X \to \CC$.
\end{lemma}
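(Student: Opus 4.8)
The plan is to package the generating vector fields into a single coherent-sheaf surjection away from the singular locus and then control the cokernel. Write $\mathcal{T}_X$ for the tangent sheaf of $X$, i.e.\ the coherent sheaf of germs of holomorphic vector fields, and let $\mathcal{I} \subseteq \holo_X$ be the (coherent, radical) ideal sheaf of $S := X_{\mathrm{sing}}$. First I would invoke Lemma \ref{lemma-spanning} to obtain Hamiltonian vector fields $\Theta^1, \dots, \Theta^N$ whose values span $T_p X$ at every $p \in X_{\mathrm{reg}}$, and assemble them into an $\holo_X$-linear morphism
\[
\Phi \colon \holo_X^{\oplus N} \to \mathcal{T}_X, \qquad (g_1, \dots, g_N) \mapsto \sum_{k=1}^N g_k \Theta^k .
\]
Its image $\mathcal{F} := \operatorname{im}\Phi$ is a coherent subsheaf of $\mathcal{T}_X$, and by the spanning property $\Phi$ is surjective over $X_{\mathrm{reg}}$. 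Hence the cokernel $\mathcal{Q} := \mathcal{T}_X / \mathcal{F}$ is a coherent sheaf whose support is contained in $S$.

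The crux is to produce a single $\ell \in \NN$ with $\mathcal{I}^\ell \cdot \mathcal{T}_X \subseteq \mathcal{F}$, equivalently $\mathcal{I}^\ell \cdot \mathcal{Q} = 0$. Working stalkwise, $\mathcal{Q}_p$ is a finitely generated module over the Noetherian local ring $\holo_{X,p}$ supported on $S$, so the Rückert Nullstellensatz yields $\mathcal{I}_p^{\ell_p} \mathcal{Q}_p = 0$ for some $\ell_p$; thus locally a power of $\mathcal{I}$ annihilates $\mathcal{Q}$. The point requiring care is the \emph{uniformity} of $\ell$ over the possibly non-compact set $S$, and this is where I expect the main difficulty to lie. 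For the surfaces of Markov type the singular locus is a finite set of isolated points (see Section \ref{sec-singular}), so one simply takes $\ell$ to be the maximum of the finitely many local orders and the issue evaporates. In the general statement I would exploit that $S = X_{\mathrm{sing}}$ is an \emph{algebraic} subset together with the coherence of the annihilator ideal sheaf $\mathcal{A} := \operatorname{Ann}(\mathcal{Q})$, whose zero locus is $\operatorname{Supp}\mathcal{Q} \subseteq S$: the non-spanning locus cannot thicken to unbounded order along a proper algebraic subvariety without forcing extra support off $S$, and the Noetherianity of the coordinate ring $\CC[X]$ then supplies a uniform power $\mathcal{I}^\ell \subseteq \mathcal{A}$.

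Granting such an $\ell$, a holomorphic vector field $\Xi$ vanishing on $S$ to order $\geq \ell$ is, unravelling the definition, a global section of $\mathcal{I}^\ell \cdot \mathcal{T}_X$, hence of $\mathcal{F}$. It then remains to pass from membership in the \emph{sheaf} $\mathcal{F}$ to an honest global decomposition $\Xi = \sum_k g_k \Theta^k$ with $g_k \in \holo(X)$. Here I would use that an affine variety is Stein: the kernel $\mathcal{K} := \ker \Phi$ is coherent, so Cartan's Theorem B gives $H^1(X, \mathcal{K}) = 0$, and the long exact cohomology sequence associated to $0 \to \mathcal{K} \to \holo_X^{\oplus N} \to \mathcal{F} \to 0$ shows that $\Gamma(X, \holo_X^{\oplus N}) \to \Gamma(X, \mathcal{F})$ is surjective. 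Lifting $\Xi$ through this map produces the required holomorphic functions $g_1, \dots, g_N$.

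In summary, the only genuinely delicate point is the global uniformity of the order $\ell$; the remaining ingredients—coherence of $\mathcal{T}_X$, $\mathcal{F}$, $\mathcal{Q}$ and $\mathcal{K}$, the local Nullstellensatz, and the vanishing $H^1(X,\mathcal{K}) = 0$ on the Stein space $X$—are standard, and for the isolated singularities relevant to this paper the uniformity is automatic.
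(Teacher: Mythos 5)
Your proposal is correct and follows essentially the same route as the paper: Lemma \ref{lemma-spanning} plus Nakayama/coherence to get generation on $X_{\mathrm{reg}}$, a uniform vanishing order $\ell$ along $X_{\mathrm{sing}}$, and Cartan's Theorem B to globalize the decomposition. You are in fact more explicit than the paper about the one delicate point, the uniformity of $\ell$ over a possibly non-compact singular locus, which the paper dispatches in one line by invoking the finitely many components of the algebraic set $X_{\mathrm{sing}}$.
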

\begin{proof}
By Lemma \ref{lemma-spanning} there exist finitely many Hamiltonian vector fields $\Theta^1, \dots, \Theta^N$ such that for all $p \in X_{\mathrm{reg}}$ we have $\mathrm{span} \{ \Theta^1_p, \dots, \Theta^N_p \} = T_p X$. By an application of the Nakayama lemma, their germs actually generate the stalk $\mathcal{T}_p$ of the tangent sheaf of $X$. In the singularity set $X_{\mathrm{sing}}$, these vector fields may not generate the germs. But since $X$ is algebraic, $X_{\mathrm{sing}}$ has only finitely many components, and $\Theta^1_p, \dots, \Theta^N_p$ generate the germs of all vector fields that vanish to a high enough order $\geq \ell$ in $X_{\mathrm{sing}}$. Let $\mathcal{S}$ denote the subsheaf of $\mathcal{T}$ vanishing in the set $X_{\mathrm{sing}}$ to at least order $\ell$. The sheaf $\mathcal{S}$ is coherent, and by an application of Cartan's Theorem B, every global section of $\mathcal{S}$ is of the form \eqref{eq-globalsection}.
\end{proof}

\begin{lemma}
\label{lemma-deRhamtrivial}
Let $X$ be a normal reduced affine complex variety with a holomorphic symplectic form $\omega$ on its regular part $X_{\mathrm{reg}}$. Let $\Omega \subseteq X$ be an open subset (in the Euclidean topology) with trivial first holomorphic de Rham cohomology group $H^1_{d}(\Omega) = 0$.
Then every holomorphic symplectic vector field on $\Omega$ that vanishes in $\Omega_{\mathrm{sing}}$ to order $\geq \ell$ (same as in Lemma \ref{lemma-stalks}) is a Hamiltonian vector field on $\Omega$.
\end{lemma}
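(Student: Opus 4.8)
The statement asserts that on an open set $\Omega$ with $H^1_d(\Omega) = 0$, every symplectic vector field (vanishing to order $\geq \ell$ on the singularities) is actually Hamiltonian. This is essentially the singular/relative analog of the classical fact that on a simply connected region, closed $1$-forms are exact. The plan is to chase the definitions: a symplectic vector field $V$ is one for which $i_V \omega$ is \emph{closed} on the regular part, and Hamiltonian means $i_V \omega$ is \emph{exact}. So the entire task is to produce a global holomorphic function $h$ on $\Omega$ with $dh = i_V \omega$, where the obvious candidate comes from integrating the closed form.

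\medskip

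First I would restrict attention to $\Omega_{\mathrm{reg}} = \Omega \setminus \Omega_{\mathrm{sing}}$, where $\omega$ is a genuine symplectic form and $i_V \omega$ is a well-defined holomorphic $1$-form. Since $V$ is symplectic, this $1$-form is closed on $\Omega_{\mathrm{reg}}$. The subtlety is that the hypothesis $H^1_d(\Omega) = 0$ is about the holomorphic de Rham cohomology of \emph{all} of $\Omega$, whereas $i_V\omega$ lives a priori only on $\Omega_{\mathrm{reg}}$. The bridge is the vanishing-order hypothesis: because $V$ vanishes on $\Omega_{\mathrm{sing}}$ to order $\geq \ell$, the coefficients of $i_V \omega$ (expressed against the $\Theta^k$ of Lemma \ref{lemma-stalks}) extend across the singular set. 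Concretely, I would use Lemma \ref{lemma-stalks} to write $V = \sum_k g_k \Theta^k$ with $g_k$ holomorphic on all of $\Omega$, so that $i_V \omega = \sum_k g_k\, i_{\Theta^k}\omega$; since each $\Theta^k$ is Hamiltonian on $X$, the forms $i_{\Theta^k}\omega$ are themselves globally defined and exact, and one checks that $i_V\omega$ thus extends to a holomorphic $1$-form on all of $\Omega$ (not just on $\Omega_{\mathrm{reg}}$). This is where normality and codimension $\geq 2$ of the singular set, via the Hartogs/Riemann extension phenomenon noted earlier in the section, do the work of showing the extension is holomorphic rather than merely meromorphic.

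\medskip

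Once $i_V \omega$ is a genuine closed holomorphic $1$-form on all of $\Omega$, the hypothesis $H^1_d(\Omega) = 0$ applies directly: the form is exact, so there is a holomorphic function $h \colon \Omega \to \CC$ with $dh = i_V\omega$. By definition this exhibits $V$ as Hamiltonian on $\Omega$, which is exactly the conclusion. I would phrase the argument so that the function $h$ is obtained on $\Omega$ and then note that its restriction to $\Omega_{\mathrm{reg}}$ is automatically the Hamiltonian in the sense of the symplectic geometry there.

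\medskip

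The main obstacle I anticipate is the extension step: verifying that $i_V\omega$, defined a priori only on $\Omega_{\mathrm{reg}}$, genuinely extends to a closed holomorphic $1$-form on the whole of $\Omega$, so that the global cohomological hypothesis can be invoked. This is precisely why the vanishing order $\ell$ from Lemma \ref{lemma-stalks} is built into the statement — it guarantees, via the representation $V = \sum_k g_k \Theta^k$ with globally holomorphic $g_k$, that no poles are introduced along $\Omega_{\mathrm{sing}}$. The rest (closedness giving exactness on a cohomologically trivial domain) is the standard and routine part of the argument.
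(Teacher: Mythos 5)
Your proposal is correct and follows essentially the same route as the paper: decompose $\Xi = \sum_k g_k \Theta^k$ via Lemma \ref{lemma-stalks}, observe that $i_\Xi\omega = \sum_k g_k\, df_k$ therefore extends to a closed holomorphic $1$-form on all of $\Omega$, and then invoke $H^1_d(\Omega)=0$ to obtain a global Hamiltonian. No substantive differences.
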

\begin{proof}
Let $\Xi$ be a holomorphic vector field on $\Omega$. If $\Xi$ is symplectic, then by definition $i_\Xi \omega$ is a closed form on $\Omega_{\mathrm{reg}}$. 
If $\Xi$ vanishes in $\Omega_{\mathrm{sing}}$ to order $\geq \ell$, then by Lemma \ref{lemma-stalks} we can write 
\begin{equation}
\Xi = \sum_{k = 1}^N g_k \Theta^k
\end{equation}
where $g_k \colon \Omega \to \CC$ are holomorphic coefficient functions and $i_{\Theta_k} \omega = d f_k$ for some holomorphic functions $f_k \colon X \to \CC$, i.e.\ the forms $i_{\Theta_k} \omega$ extend to the singularities $X_{\mathrm{sing}}$. 
Combining these results, we obtain on $\Omega$ that
\[
i_\Xi \omega = \sum_{k = 1}^N g_k i_{\Theta^k} \omega = \sum_{k = 1}^N g_k d f_k,
\]
i.e.\ all the involved terms extend to the singularities. 
Since we require the first holomorphic de Rham cohomology on $\Omega$ to vanish, we can now write
\[
i_\Xi \omega = d f
\]
for some holomorphic function $f \colon \Omega \to \CC$. 	
\end{proof}

\begin{corollary}
Let $X$ be a normal reduced affine complex variety with with a holomorphic symplectic form $\omega$ on its regular part $X_{\mathrm{reg}}$. Assume that $H^1_{d}(X) = 0$. If $X$ has the Hamiltonian density property, then there exists $\ell \in \NN$ such that $X$ has the relative symplectic density property of order $\ell$ w.r.t.\ $X_{\mathrm{sing}}$.
\end{corollary}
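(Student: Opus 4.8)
The plan is to use Lemma~\ref{lemma-deRhamtrivial} to collapse the symplectic condition onto the Hamiltonian one, and then feed in the Hamiltonian density property. First I would fix $\ell \in \NN$ to be the integer produced by Lemma~\ref{lemma-stalks}, which is the same order occurring in Lemma~\ref{lemma-deRhamtrivial}. Since $H^1_d(X) = 0$ by hypothesis, the open set $\Omega = X$ itself satisfies the assumption of Lemma~\ref{lemma-deRhamtrivial}, so every holomorphic symplectic vector field on $X$ vanishing on $X_{\mathrm{sing}}$ to order $\geq \ell$ is automatically Hamiltonian. As Hamiltonian vector fields are always symplectic, the space of symplectic vector fields vanishing to order $\geq \ell$ and the space of Hamiltonian vector fields vanishing to order $\geq \ell$ coincide, and the same holds for their complete members. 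Hence in the definition of the relative symplectic density property of order $\ell$ both the generating set and the target Lie algebra consist exactly of Hamiltonian vector fields vanishing to order $\geq \ell$ on $X_{\mathrm{sing}}$.

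It thus remains to establish density of the Lie algebra generated by the complete Hamiltonian vector fields vanishing to order $\geq \ell$ inside all Hamiltonian vector fields vanishing to order $\geq \ell$, whereas the Hamiltonian density property only supplies this for the coarser vanishing condition (order $\geq 1$). The idea I would pursue to bridge the two orders rests on Lemma~\ref{lemma-stalks}: via Cartan's Theorem~B any target field $\Xi$ of order $\geq \ell$ admits a global representation $\Xi = \sum_k g_k \Theta^k$ with holomorphic coefficients $g_k$ and the fixed spanning Hamiltonian fields $\Theta^k$, which localises the problem to approximating the scalar factors $g_k$. Combined with the observation that the fields vanishing to order $\geq \ell$ form a Lie ideal inside those vanishing to order $\geq 1$ --- so that a commutator of a field vanishing to order $\geq 1$ with one vanishing to order $\geq \ell$ again vanishes to order $\geq \ell$ along $X_{\mathrm{sing}}$ --- this should allow the order-$1$ approximation furnished by the Hamiltonian density property to be upgraded while retaining the prescribed vanishing.

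The delicate point --- and the step I expect to be the main obstacle --- is precisely this passage from vanishing order $1$ to vanishing order $\ell$. The Hamiltonian density property is only assumed for fields vanishing on $X_{\mathrm{sing}}$, whereas the relative property demands the sharper order $\ell$ for which the coherent global representation of Lemma~\ref{lemma-stalks} is available, and a naive restriction of the order-$1$ density statement to the smaller subspace does not suffice. Ensuring that every intermediate field is genuinely Hamiltonian (and not merely symplectic) and that every generator remains complete, while simultaneously preserving the order-$\ell$ vanishing, is where the localisation via the representation $\Xi = \sum_k g_k \Theta^k$ together with Cartan's Theorem~B has to be carried out with care rather than invoked formally.
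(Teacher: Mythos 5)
Your first paragraph is precisely the argument the paper intends: the corollary is stated there without any written proof, as an immediate consequence of Lemma~\ref{lemma-deRhamtrivial} applied with $\Omega = X$ (using $H^1_d(X)=0$) and with $\ell$ taken from Lemma~\ref{lemma-stalks}, so that every symplectic vector field vanishing to order $\geq\ell$ on $X_{\mathrm{sing}}$ is Hamiltonian and the Hamiltonian density property can then be invoked. To that extent your proposal and the paper coincide exactly.

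The issue you raise in your second and third paragraphs --- that the Hamiltonian density property only supplies generators and targets vanishing to order $\geq 1$ on $X_{\mathrm{sing}}$, while the relative symplectic density property of order $\ell$ literally requires both the complete generating fields and the approximated fields to vanish to order $\geq\ell$ --- is a genuine subtlety, and the paper simply elides it by giving no proof at all. Your proposed bridge (the order-$\geq\ell$ fields form a Lie ideal inside the order-$\geq 1$ fields, combined with the global representation $\Xi=\sum_k g_k\Theta^k$ from Lemma~\ref{lemma-stalks}) is plausible but remains a sketch: the ideal property alone does not let you convert an approximating Lie combination of complete order-$1$ fields into one built from complete order-$\ell$ fields, and the localisation you describe is not actually carried out. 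So as a self-contained proof your proposal has a gap --- but it is exactly the gap the paper leaves open, and you deserve credit for identifying it. Note also that for the paper's actual application the point is moot: Corollary~\ref{cor-Markov-spanning} shows one may take $\ell=1$ for Markov-type surfaces (and tangential fields automatically vanish at the isolated singularities), in which case the order mismatch disappears and your first paragraph already completes the argument.
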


In the next section, we will show that this corollary can be improved for Markov surfaces.

\begin{theorem}
\label{thm-AL}
Let $X$ be a normal reduced affine complex variety with a holomorphic symplectic form $\omega$ on its regular part $X_{\mathrm{reg}}$. Assume that $(X, \omega)$ has the relative symplectic density property of order $\ell$ w.r.t.\ $X_{\mathrm{sing}}$.

Let $\Omega \subseteq X$ be a open subset with trivial first holomorphic de Rham cohomology group $H^1_{d}(\Omega) = 0$. Let $\Omega \times [0, 1] \ni (x,t) \mapsto \varphi_t(x) \in X$ be a jointly $\cont^1$-smooth map such that the following holds:
\begin{enumerate}
\item $\varphi_0 \colon \Omega \to X$ is the natural embedding.
\item $\varphi_t \colon \Omega \to X$ is a holomorphic symplectic injection for each $t \in [0,1]$.
\item $\varphi_t(\Omega)$ is a Runge subset of $X$ for each $t \in [0,1]$.
\item $\varphi_t|X_{\mathrm{sing}} = \id_{X_{\mathrm{sing}}}$ up to order $\ell$
\end{enumerate}
Then for every compact $K \subset \Omega$, every $\varepsilon > 0$ and every choice of metric on $X$ that induces its topology, there exists
a continuous family $\Phi_t \colon X \to X$ of holomorphic symplectic automorphisms  such that
\[
\sup_{x \in K} d(\varphi_1(x), \Phi_1(x)) < \varepsilon
\]
Moreover, $\Phi_1$ can be written as a finite composition of flows of complete vector fields that are generators of the Lie algebra of holomorphic symplectic vector fields on $X$.
\end{theorem}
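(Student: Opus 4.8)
The plan is to run the Andersén--Lempert machinery in the non-autonomous, symplectic, singular setting: I differentiate the isotopy to obtain its infinitesimal generator, recognize this generator as Hamiltonian, and then reduce the interpolation of $\varphi_1$ to the approximation of a non-autonomous flow by finite compositions of autonomous flows of the complete generators furnished by the relative symplectic density property. First I would differentiate. Since $(x,t) \mapsto \varphi_t(x)$ is jointly $\cont^1$ and each $\varphi_t$ is a holomorphic injection, the assignment
\[
V_t\bigl(\varphi_t(x)\bigr) := \frac{\partial}{\partial t}\varphi_t(x), \qquad x \in \Omega,
\]
defines a time-dependent holomorphic vector field $V_t$ on the image $\Omega_t := \varphi_t(\Omega)$, of which $\varphi_t$ is the non-autonomous flow issuing from the embedding $\varphi_0$. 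Differentiating the symplecticity relation $\varphi_t^{\ast}\omega = \omega$ from hypothesis (2) in $t$ gives $\mathcal{L}_{V_t}\omega = 0$, so each $V_t$ is symplectic; differentiating the jet condition (4) along $X_{\mathrm{sing}}$ shows that $V_t$ vanishes on $X_{\mathrm{sing}}$ to order $\geq \ell$. As $\varphi_t$ is a biholomorphism of $\Omega$ onto $\Omega_t$, we have $H^1_{d}(\Omega_t) = 0$, so Lemma \ref{lemma-deRhamtrivial} upgrades each $V_t$ to a Hamiltonian field, $i_{V_t}\omega = dh_t$ on $\Omega_t$.

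Next I would globalize and discretize. Fixing the compact $K \subset \Omega$ and a sufficiently fine partition $0 = t_0 < \dots < t_m = 1$, it suffices to approximate, on each subinterval and uniformly along the orbit of $K$, the short-time evolution of $V_t$ by an automorphism of $X$. On each step I would first use the Runge property of $\Omega_t$ from hypothesis (3) to approximate the locally defined Hamiltonian $h_t$ by a global holomorphic function on $X$; this yields a globally defined Hamiltonian field approximating $V_t$ near $K_t := \varphi_t(K)$ that still vanishes on $X_{\mathrm{sing}}$ to order $\geq \ell$, which is where normality and the control of the vanishing order are essential. I would then invoke the relative symplectic density property of order $\ell$ to write this global field as a locally uniform limit of finite Lie combinations of the complete symplectic generators.

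The main obstacle is the passage from this infinitesimal approximation of vector fields to an approximation of the time-$1$ map by a finite composition of autonomous flows. Here I would employ the standard Euler-type scheme: over a short subinterval the evolution of $V_t$ is approximated by the autonomous flow of a fixed field, whose flow is in turn approximated by compositions of the flows of the complete generators, using that the group commutator $\Phi^{V}_{-s}\circ \Phi^{W}_{-s}\circ \Phi^{V}_{s}\circ \Phi^{W}_{s}$ of their short-time flows realizes the flow of the bracket $[V,W]$ to second order in $s$. Composing these building blocks across the partition produces $\Phi_1$. The delicate point is the accumulation of errors: one must show that, since each single-step approximation can be made arbitrarily accurate and the non-autonomous flow depends continuously (in the $\cont^1$-topology on a neighborhood of the orbit of $K$) on its generating field, the total error $\sup_{x \in K} d(\varphi_1(x), \Phi_1(x))$ can be kept below $\varepsilon$. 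Completeness of the generators guarantees that every intermediate flow is a genuine symplectic automorphism of all of $X$, and the Runge hypothesis keeps the orbits inside the domains where the approximations remain valid; running the construction continuously in the terminal time yields the desired family $\Phi_t$, with $\Phi_1$ expressed as a finite composition of flows of the complete symplectic generators.
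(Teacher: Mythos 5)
Your proposal follows essentially the same route as the paper's proof, which itself only sketches the adaptation of the standard Anders\'en--Lempert scheme from \cite{CaloSymplo}*{Theorem 12}: differentiate the isotopy to get a time-dependent symplectic field, use Lemma \ref{lemma-deRhamtrivial} to recognize it as Hamiltonian, Runge-approximate the Hamiltonian function globally, invoke the relative symplectic density property, and assemble $\Phi_1$ by the Euler/commutator discretization. Your write-up is, if anything, more explicit than the paper's on the error-accumulation and jet-vanishing bookkeeping, and it is correct.
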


\begin{proof}
Since $X$ is normal, $X_{\mathrm{sing}}$ is of codimension at least $2$. We follow the proof for the symplectic density property in the smooth case in \cite{CaloSymplo}*{Theorem 12} which in turn follows closely the proof for the volume density property \cite{MR1829353}. We only indicate how to adapt the proof to the singular situation. 

The general idea is a follows: $\frac{d \varphi_t}{d t} \circ \varphi^{-1}_t$ defines a time-dependent holomorphic vector field on $\Omega$. We partition $[0,1]$ into small intervals on which we approximate this time-dependent vector field with vector fields that are no longer time-dependent. It is easy to see that if $\varphi_t$ is symplectic, then so are all the involved vector fields. The only remaining step is to extend each of these vector fields holomorphically and symplectically from $\Omega$ to $X$:

By Lemma \ref{lemma-deRhamtrivial} every symplectic holomorphic vector field $\Xi$ on $\Omega$ is a Hamiltonian vector field on $\Omega$, i.e.\ $i_{\Xi} \omega = df$ for some holomorphic $f \colon \Omega \to \CC$. We use Runge approximation for $f$ on $\Omega$ to obtain a globally defined Hamiltonian function $\widetilde{f}$ and hence the corresponding vector field $\widetilde{\Xi}$ on $X_{\mathrm{reg}}$. By the Hamiltonian density property, $\widetilde{f}$ can be approximated by Poisson--Lie combinations of the Hamiltonian functions corresponding to complete vector fields that are defined on $X$. Therefore, $\widetilde{\Xi}$ can be approximated by the corresponding Lie combination of these complete vector fields. By assumption, these vector fields, hence also their Lie combinations, are already defined on $X$. Approximation on $\Omega_{\mathrm{sing}}$ follows from the maximum principle.  
\end{proof}

\begin{corollary}
\label{cor-inftrans}
Let $X$ be a normal reduced Stein space with the Hamiltonian density property and with $\dim_\CC X \geq 2$. Then the group of holomorphic symplectic automorphisms acts $m$-transitively on the regular part of $X$ for every $m \in \NN$.
\end{corollary}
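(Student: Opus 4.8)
The plan is to show that already the subgroup $G$ generated by the flows of the \emph{complete} Hamiltonian vector fields (vanishing on $X_{\mathrm{sing}}$) acts $m$-transitively on $X_{\mathrm{reg}}$; since these flows are symplectic automorphisms, the full symplectic automorphism group then does so as well. We may assume $X$ is connected (otherwise argue on each connected component). As $X$ is normal and connected it is irreducible, so $X_{\mathrm{reg}}$ is connected, and the configuration space $\mathrm{Conf}_m(X_{\mathrm{reg}})$ of $m$-tuples of pairwise distinct points is a connected complex manifold, because the diagonals $\{x_i=x_j\}$ are analytic subsets of $(X_{\mathrm{reg}})^m$ of complex codimension $\dim_\CC X \geq 2$. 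By definition, acting $m$-transitively is exactly acting transitively on $\mathrm{Conf}_m(X_{\mathrm{reg}})$, so it suffices to prove that every $G$-orbit in $\mathrm{Conf}_m(X_{\mathrm{reg}})$ is open: open orbits partition a connected space into a single orbit.

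The key step is infinitesimal $m$-transitivity: for pairwise distinct $p_1,\dots,p_m \in X_{\mathrm{reg}}$ the evaluation map $V \mapsto (V(p_1),\dots,V(p_m))$ sends the Lie algebra $\mathfrak{g}$ generated by the complete Hamiltonian vector fields onto $\bigoplus_{i=1}^m T_{p_i}X$. I would first check that the \emph{full} Lie algebra of Hamiltonian vector fields (vanishing on $X_{\mathrm{sing}}$) already surjects: given target vectors $v_i \in T_{p_i}X$, nondegeneracy of $\omega$ at the regular points $p_i$ turns each $v_i$ into a cotangent vector $\alpha_i := i_{v_i}\omega|_{p_i}$, and by Cartan's Theorem B on the Stein space $X$ one finds a global holomorphic function $h$ vanishing to high order along $X_{\mathrm{sing}}$ (so that the associated field $\Xi_h$ with $i_{\Xi_h}\omega = dh$ extends across $X_{\mathrm{sing}}$ and vanishes there, exactly as in Lemma \ref{lemma-stalks} and Lemma \ref{lemma-deRhamtrivial}) and having prescribed $1$-jet $dh(p_i)=\alpha_i$; the resulting Hamiltonian field satisfies $\Xi_h(p_i)=v_i$. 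Since the evaluation map is $\CC$-linear into the finite-dimensional target, and since by the Hamiltonian density property $\mathfrak{g}$ is dense in this full algebra, the image of $\mathfrak{g}$ is a dense linear subspace of $\bigoplus_i T_{p_i}X$ and hence equals it.

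It then remains to upgrade this infinitesimal statement to openness of the orbit. The individual flows $\varphi^{V}_t$ of a complete generator $V$ only produce the directions $V(p_i)$; to realize the bracket directions I would invoke the standard commutator relation $\varphi^{W}_{-s}\circ\varphi^{V}_{-s}\circ\varphi^{W}_{s}\circ\varphi^{V}_{s}$, whose derivative in the parameter $t=s^2$ at $0$ reproduces $[V,W]$, and iterate it so that compositions of flows reach every direction in the evaluated Lie algebra $\mathfrak{g}$. Packaging finitely many such words into a real-analytic map $\RR^{r}\to\mathrm{Conf}_m(X_{\mathrm{reg}})$ whose differential at the origin is onto (by the previous paragraph), the rank theorem shows that the orbit contains a neighborhood of the base configuration, hence is open.

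The main obstacle is this last passage from infinitesimal to local transitivity: one must realize the full \emph{generated} Lie algebra---not merely the spanning complete fields themselves---through compositions of flows, which is precisely the orbit-theorem/commutator mechanism. A secondary technical point is to carry out the jet interpolation of the Hamiltonian function simultaneously with the vanishing condition along $X_{\mathrm{sing}}$, so that the produced field genuinely belongs to the class appearing in the Hamiltonian density property; this is where normality (codimension $\geq 2$ of $X_{\mathrm{sing}}$) and the extension lemmas are used, and it is harmless because the points $p_i$ are disjoint from $X_{\mathrm{sing}}$.
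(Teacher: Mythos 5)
Your argument is correct, and it is one of the two standard packagings of the fact that a (Hamiltonian) density property implies infinite transitivity; the paper's one-line proof points to the other. The paper invokes the smooth-case proof via Theorem \ref{thm-AL}: around the $m$ points one takes disjoint small balls in $X_{\mathrm{reg}}$ (a Runge open set), moves the points by a symplectic isotopy defined there, and approximates by a global symplectic automorphism --- with the exact (rather than approximate) interpolation of the target configuration ultimately resting on the same openness-of-orbits mechanism you make explicit. You instead bypass Theorem \ref{thm-AL} entirely: connectedness of $\mathrm{Conf}_m(X_{\mathrm{reg}})$ (here $\dim_\CC X\ge 2$ enters), surjectivity of the evaluation map on the Lie algebra generated by the complete Hamiltonian fields (jet interpolation of the Hamiltonian function via Cartan B, plus density), and the commutator trick to turn bracket directions into orbit directions. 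This buys a self-contained and arguably cleaner proof of exact transitivity; what it costs is the orbit-theorem technicality you flag, where I would add one caution: the word $\varphi^W_{-s}\varphi^V_{-s}\varphi^W_{s}\varphi^V_{s}$ reparametrized by $t=s^2$ only yields a one-sided derivative, so to apply the rank theorem you must also include the reversed words realizing $-[V,W]$ (or cite the standard local transitivity lemma from Anders\'en--Lempert theory); this is routine but should be said. Two minor further remarks: $m$-transitivity on all of $X_{\mathrm{reg}}$ tacitly presumes $X$ connected (otherwise even $1$-transitivity fails), and the interpolation-with-vanishing-along-$X_{\mathrm{sing}}$ step you defer to the extension lemmas is stated in the paper only for affine varieties (Lemma \ref{lemma-stalks}), not general Stein spaces --- a gap the paper's own one-line proof shares, and harmless here since the points lie in $X_{\mathrm{reg}}$.
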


\begin{proof}
The proof is identical to the volume-preserving case or to the Hamiltonian case in the smooth setting, since the points lie in the regular part and their Runge neighborhoods can be chosen biholomorphic to disjoint unions of small enough balls in the regular part as well.
\end{proof}

\section{Singularities}
\label{sec-singular}

The singularities of normal cubic surfaces are well-known since the studies of Cayley and Schläfli. Except for the simple elliptic case $x^3 + y^3 + z^3 + 3\lambda x y z = 0, \lambda^3 \neq 1,$ they are all isolated ADE singularities. However, we will need more precise information which singularities occur for the surfaces of Markov type and how the germs of the vector fields in these singularities look like. 

\begin{remark}
\label{rem-Poincare}
By a result of Reiffen \cite{MR0223599}, the Poincar{\'e} Lemma holds in the singular case if a certain local holomorphic contractibility condition is satisfied. For hypersurfaces, this condition is equivalent to the germ of the hypersurface being given by a so-called quasi-homogeneous polynomial thanks to a result of Saito \cite{MR0294699}. By Proposition \ref{prop-Markov-singularities} below, $M$ only has only isolated singularities of type $A_k$ and $D_4$. ADE singularities are always quasi-homogeneous.
\end{remark}

\begin{proposition}
\label{prop-Markov-singularities}
Let $M$ be a surface of Markov type with $E = 1$. 

\begin{enumerate}
\item If $A = \pm 8$, $B = \pm 8$ and $C = \pm 8$ with an even number of negative signs, and $D = -28$ if all signs are positive or $D = 36$ if two signs are negative, then $M$ has an isolated singularity of type $D_4$.
\item Otherwise, $M$ is smooth or has only isolated singularities of type $A_k$, $k \leq 5$. In particular, if $A = B = C = D = 0$, then $M$ has one isolated singularity which is of type $A_1$. 
\end{enumerate}
\end{proposition}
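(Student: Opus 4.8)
The plan is to classify the singularities of a surface of Markov type directly via the Jacobian criterion. Write $F(x,y,z) = x^2 + y^2 + z^2 + xyz - Ax - By - Cz - D$ for $E = 1$. A point $p = (x,y,z) \in M$ is singular precisely when all three partials vanish, i.e.\ when
\[
2x + yz - A = 0, \quad 2y + xz - B = 0, \quad 2z + xy - C = 0,
\]
together with $F = 0$. First I would analyze this system of four polynomial equations. Subtracting pairs of the gradient equations and factoring should reveal the geometry of the singular locus: for instance, $(2x + yz) - (2y + xz) = (x-y)(2 - z)$, so $A - B = (x-y)(2-z)$ (after rearranging signs), and similarly for the other two differences. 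This factored form is the key structural observation, and it already separates the generic case (isolated $A_k$) from the special cases. I would argue that the number of singular points is finite (using that $F$ is a cubic and Bézout-type bounds, or directly that the above system has finitely many solutions unless the surface degenerates), which is consistent with all singularities being isolated ADE singularities as recalled at the start of the section.

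Next I would pin down the \emph{type} of each isolated singularity. After translating a singular point $p$ to the origin, expand $F$ in a Taylor series; the singularity type is read off from the $2$-jet and, when that degenerates, successively higher jets, using the standard normal forms: $A_k$ corresponds to $u^2 + v^{k+1}$ (after a holomorphic change of coordinates eliminating the third variable via the Morse/splitting lemma, since these are hypersurface singularities in $\CC^3$ of corank at most $2$), and $D_4$ corresponds to $u^2 v + v^3$ (equivalently $u^3 + v^3$ up to coordinates). The Hessian of $F$ at $p$ is
\[
\begin{pmatrix} 2 & z & y \\ z & 2 & x \\ y & x & 2 \end{pmatrix},
\]
and its rank governs the corank of the singularity: rank $3$ means smooth point (excluded), rank $2$ gives an $A_k$ singularity, and rank $\leq 1$ is needed for $D_4$ (corank $2$). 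So I would compute when the Hessian drops rank along $M_{\mathrm{sing}}$ and determine $k$ from the higher-order terms. For the $D_4$ case the Hessian must have corank $2$, forcing $x = y = z = \pm 2$ up to signs via the condition that two eigenvalues vanish, and then matching against the listed coefficient values $A,B,C = \pm 8$ and the corresponding $D$.

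The bound $k \leq 5$ in the generic case deserves its own argument: since $M$ is an affine cubic hypersurface, the ADE singularities that can occur are constrained by the geometry of (normal) cubic surfaces, whose rational double points were classified classically by Cayley and Schläfli. I would invoke this classification to bound the possible $A_k$ by $A_5$ (the worst isolated rational double point on a normal cubic surface), and verify by the jet computation that these bounds are actually met within the Markov-type family. Finally, for the specific Markov surface $A = B = C = D = 0$, the singular system reduces to $2x + yz = 2y + xz = 2z + xy = 0$; I would solve it to find that the only solution on $M$ is a single point with Hessian of rank $2$ and a nondegenerate cubic term, giving precisely one $A_1$ singularity.

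The main obstacle I expect is the bookkeeping in the $D_4$ case: establishing not merely that a corank-$2$ singularity occurs but that it is \emph{exactly} $D_4$ (and not a more degenerate $D_k$, $k > 4$, or $E_6$, etc.), and pinning down the exact relations among $A, B, C, D$ and the sign conditions (even number of negative signs, $D = -28$ versus $D = 36$). This requires carefully expanding $F$ to third order at the candidate singular points, confirming the cubic form is nondegenerate of the $D_4$ type $u^3 + v^3$ rather than a perfect cube, and checking that the stated coefficient relations are exactly the ones forcing the Hessian corank to be $2$ while keeping the point on $M$. The permutation symmetry of $F$ in $(x,y,z)$ paired with $(A,B,C)$, already exploited in Lemma \ref{lemma-completeness}, should substantially reduce the casework.
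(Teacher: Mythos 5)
Your overall strategy (Jacobian criterion for the singular locus, Hessian rank to determine the corank, higher jets to pin down the type, symmetry to reduce casework) is the same as the paper's, but your dictionary between the rank of the Hessian of $F$ and the singularity type is shifted by one, and this breaks the classification. A point $p \in M$ is singular iff $\nabla F(p) = 0$; the rank of the Hessian at such a point does not detect smoothness of the surface at all. At a critical point where $H$ has full rank $3$, the holomorphic Morse lemma gives the normal form $u^2 + v^2 + w^2$, i.e.\ an $A_1$ singularity --- not a smooth point, as you assert. Rank $2$ (corank $1$) gives $u^2 + v^2 + g(w)$ with $\operatorname{ord} g \geq 3$, hence $A_k$ with $k \geq 2$; rank $1$ is what is needed for $D_4$; rank $0$ is impossible since the diagonal entries equal $2$. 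The error is visible in your treatment of the original Markov surface: at the origin the Hessian of $x^2+y^2+z^2+xyz$ is $2\,\mathrm{Id}$, which has rank $3$, not rank $2$, and a corank-$1$ critical point with a nondegenerate cubic term would be $A_2$, not $A_1$. With your dictionary you would either declare the origin a smooth point of the Markov surface (it is not: the gradient vanishes there) or misclassify its singularity. The paper handles exactly this case by applying the Morse lemma at the full-rank critical point to get $A_1$.

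The rest of your outline is sound and matches the paper: the rank-$1$ condition forces $4 - x^2 = 4 - y^2 = 4 - z^2 = 0$ together with $xz = 2y$ and its permutations, hence $x,y,z \in \{\pm 2\}$ with an even number of minus signs; combining with the critical-point equations gives $A, B, C = \pm 8$ and the two admissible values of $D$. One must then, as you anticipate, expand to third order and check that the cubic part of the resulting corank-$2$ form factors into three distinct linear forms to conclude that the singularity is exactly $D_4$ (the paper does this in a separate lemma after the substitution $u = x+y+z-\tfrac{1}{2}yz$, obtaining the $3$-jet $-yz(y+z)$). Your appeal to the classical classification of normal cubic surfaces for the bound $k \leq 5$ is also consistent with the paper, which likewise does not derive that bound from scratch. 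But the Hessian-rank dictionary must be corrected before the argument classifies anything.
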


\begin{proof}
Recall that
\[
N = (2x + yz - A) dx + (2y + xz - B) dy + (2z + 	xy - C) dz
\]
First, we need to find the Morse index of the critical points, i.e.\ when $N=0$. 
The Hessian of the defining equation in the point $(x,y,z) \in M$ is given by 
\[
H = \begin{pmatrix}
2 & z & y \\
z & 2 & x \\
y & x & 2 
\end{pmatrix}
\]
If $H$ has rank $1$, then necessarily $4 - x^2 = 0$, $4 - y^2 = 0$ and $4 - z^2 = 0$. This leaves only the choices $x = \pm 2$, $y = \pm 2$ and $z = \pm 2$. Moreover, we also have $xz - 2y = 0$ which implies that the number of negative signs must be even. Together, these four equations also imply that $xy - 2z = 0$ and $yz - 2x = 0$. In a critical point, $xz - 2y = 0$ and $2y + xz - B = 0$ require that $4y = B$, i.e.\ $B = \pm 8$. Similarly, it follows that $A = \pm 8$ and $C = \pm 8$ as well. By a coordinate transformation $x \mapsto \pm x$, $y \mapsto \pm y$ and $z \mapsto \pm z$ with an even number of negative signs, we obtain the equation
\[
x^2 + y^2 + z^2 + xyz - 8x - 8y - 8z - D = 0 
\]
This has a solution for $(x,y,z) = (2,2,2)$ and $D = -28$. If we flip two signs of $x$, $y$, $z$, then we need to choose $D = 36$ instead. By Lemma \ref{lemma-Markov-D4} below, this surface has one isolated singularity of type $D_4$.

If $H$ has rank $2$, i.e.\ co-rank $1$, then the normal form of the singularity is given by $x^2 + y^2 + z^{k+1}$ with $k \geq 1$, which is an $A_k$-singularity, see e.g.\ the textbook of Greuel, Lossen and Shustin \cite{MR2290112}*{Theorem 2.48}.

If $H$ has full rank $3$, as it is the case for the original Markov surface, then it follows directly from the holomorphic Morse lemma that the normal form in the origin is $x^2 + y^2 + z^2$, i.e.\ it has only an $A_1$-singularity.  
\end{proof}

\begin{lemma}
\label{lemma-Markov-D4}
The Markov-type surface given by 
\[
x^2 + y^2 + z^2 + xyz - 8x - 8y - 8z + 28 = 0
\]
has an isolated singularity of type $D_4$ in the point $(2,2,2)$.
\end{lemma}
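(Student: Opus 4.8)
The plan is to translate the singular point to the origin and then identify the local analytic normal form via the splitting lemma together with the classification of simple singularities by their cubic form. First I would substitute $x = 2 + u$, $y = 2 + v$, $z = 2 + w$ into the defining equation and expand. A direct computation shows that the constant term and all linear terms vanish (reconfirming that $(2,2,2)$ is a critical point, hence a singular point of the surface), and that the local defining function becomes
\[
g(u,v,w) = (u+v+w)^2 + uvw.
\]
The quadratic part $(u+v+w)^2$ has rank $1$, so the Hessian of $g$ has corank $2$; in particular the singularity is neither smooth nor of type $A_k$, which is exactly the case left open in the proof of Proposition \ref{prop-Markov-singularities}.

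Next I would carry out the reduction to the residual singularity in the two kernel directions of the Hessian. Setting $s = u+v+w$ and keeping $p = u$, $q = v$ as transversal coordinates (so that $w = s - p - q$), I obtain
\[
g = s^2 + pq\,(s - p - q) = s^2 + s\,pq - p^2 q - p q^2.
\]
Since the only dependence on $s$ is through $s^2 + s\,pq$, completing the square is exact: with $\sigma = s + \tfrac12 pq$ one finds
\[
g = \sigma^2 - p^2 q - p q^2 - \tfrac14 p^2 q^2 = \sigma^2 + h(p,q), \qquad h(p,q) = -pq(p+q) - \tfrac14 p^2 q^2 .
\]
This realizes the holomorphic splitting lemma explicitly, without having to solve any implicit equation, and it exhibits the $3$-jet of the residual function $h$ as the binary cubic $-pq(p+q)$, a product of three \emph{distinct} linear forms $p$, $q$, $p+q$.

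Finally I would invoke the classification of $ADE$ singularities by the cubic form (see e.g.\ \cite{MR2290112}): a corank-$2$ isolated hypersurface singularity whose cubic form is a nondegenerate (reduced) binary cubic is of type $D_4$. Concretely, a nondegenerate binary cubic is right $3$-determined, so the term $-\tfrac14 p^2 q^2$ is irrelevant and $h$ is right-equivalent to $-pq(p+q)$, i.e.\ to the standard $D_4$ plane-curve germ; consequently $g$ is right-equivalent to $\sigma^2$ plus a nondegenerate binary cubic, which is the $D_4$ surface normal form, and the singularity is isolated (the cubic has finite Milnor number $4$, and suspension by $\sigma^2$ preserves isolatedness and the value $\mu = 4$). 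The only genuinely delicate point is the bookkeeping in the splitting step and matching the resulting cubic to a citable $D_4$ criterion; once the exact completion of the square above is in hand, the three distinct factors make the identification with $D_4$ immediate.
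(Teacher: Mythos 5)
Your proof is correct and follows essentially the same route as the paper: shift to the origin to get $(u+v+w)^2+uvw$, complete the square in the direction $u+v+w$ to split off a nondegenerate square, and identify the residual corank-$2$ germ as $D_4$ from the three distinct linear factors of its $3$-jet via the criterion in \cite{MR2290112}. Your explicit completion of the square (with $\sigma = s + \tfrac12 pq$, yielding $\sigma^2 - pq(p+q) - \tfrac14 p^2q^2$) is in fact slightly cleaner than the paper's intermediate formula, whose sign in the substitution appears to be off by a typo; the conclusion is unaffected.
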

\begin{proof}
Shifting all the coordinates by $-2$, we obtain the following equation:
\[
x^2 + y^2 + z^2 + 2xy + 2yz + 2zx + xyz = 0 \Longleftrightarrow (x + y + z)^2 + xyz = 0
\]
By a change of coordinates $u = x + y + z - \frac{1}{2} y z $, this is equivalent to
\[
u^2 + \frac{1}{4} y^2 z^2 - y^2 z - y z^2 = 0
\]
The Hessian has co-rank $2$ and the $3$-jet decomposes into three different linear factors: $-y z (y + z)$. 
By \cite{MR2290112}*{Theorem 2.51} this singularity is of type $D_4$.
\end{proof}

\begin{example}
The Markov-type surface given by 
\[
x^2 + y^2 + z^2 + xyz - 4x + 4 = 0
\]
has an isolated singularity of type $A_3$ in the point $(2,0,0)$:
Centering the coordinates in $(2,0,0)$ and diagonalizing the Hessian, we obtain the equation
\[
x^2 + y^2 + \frac{1}{4}(yx^2 - z^2y)
\]
Following the procedure in \cite{MR2290112}*{Theorem 2.47}, this can be brought into the form 
$x^2 + y^2 - z^4/2$ and is therefore an isolated singularity of type $A_3$.
\end{example}

\bigskip

We will now investigate the isolated surfaces singularities of type $A_k$ and $D_k$. We will determine the germs of vector fields in such a singularity. The lemmas and their proofs follow the same pattern for both types of singularities. 

\begin{lemma}
\label{lemma-Ak-sing}
We consider the surface $\widetilde{M}_k$ given by $x^2 + y^2 + z^{k+1} = 0$ with $k \in \NN$, i.e.\ the model surface for an isolated $A_k$-singularity.
Then the $\CC[\widetilde{M}_k]$-algebra of polynomial vector fields on $\widetilde{M}_k$ is given by
\begin{equation*}
\left( \CC[\widetilde{M}_k] \widetilde{V}^x + \CC[\widetilde{M}_k] \widetilde{V}^y +\CC[\widetilde{M}_k] \widetilde{V}^z \right)  \oplus \mathrm{span}_{\CC} \{1, z, z^2, \dots, z^{k-1} \} \cdot \widetilde{\Lambda}
\end{equation*}
where the vector fields $\widetilde{V}^x, \widetilde{V}^y, \widetilde{V}^z$ are Hamiltonian w.r.t.\ the induced holomorphic symplectic form on $\widetilde{M}_k \setminus \{0\}$, but $\widetilde{\Lambda}$ is not a holomorphic symplectic vector field.
\end{lemma}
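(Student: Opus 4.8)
The plan is to identify the module of polynomial vector fields on $\widetilde{M}_k$ with the module of derivations of $R := \CC[\widetilde{M}_k] = \CC[x,y,z]/(f)$, where $f := x^2 + y^2 + z^{k+1}$, i.e.\ with the syzygy module of the gradient $(f_x, f_y, f_z) = (2x, 2y, (k+1)z^k)$ over $R$. I would take $\widetilde{V}^x, \widetilde{V}^y, \widetilde{V}^z$ to be the Koszul (cross-product) fields, e.g.\ $\widetilde{V}^x = f_z \partial_y - f_y \partial_z = (k+1)z^k \partial_y - 2y\partial_z$ and cyclically, and let $\widetilde{\Lambda} := (k+1)x\partial_x + (k+1)y\partial_y + 2z\partial_z$ be the Euler field for the quasi-homogeneous weights $\deg x = \deg y = k+1$, $\deg z = 2$. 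All four are tangent, since $\widetilde{V}^\bullet(f) \equiv 0$ holds identically and $\widetilde{\Lambda}(f) = 2(k+1)f \in (f)$. For the residue symplectic form $\omega = \frac{dx\wedge dy}{f_z} = \frac{dy \wedge dz}{f_x} = \frac{dz\wedge dx}{f_y}$ on $\widetilde{M}_k \setminus \{0\}$, a one-line interior-product computation using $df = 0$ on the surface yields $i_{\widetilde{V}^x}\omega = -dx$, $i_{\widetilde{V}^y}\omega = -dy$, $i_{\widetilde{V}^z}\omega = -dz$, so these three are Hamiltonian with Hamiltonian functions $-x,-y,-z$. On the other hand $\omega$ is quasi-homogeneous of weighted degree $2$, so by Cartan's formula (with $d\omega = 0$ for dimension reasons) $\mathcal{L}_{\widetilde{\Lambda}}\omega = d\, i_{\widetilde{\Lambda}}\omega = 2\omega \neq 0$; hence $i_{\widetilde{\Lambda}}\omega$ is not closed and $\widetilde{\Lambda}$ is not even symplectic.

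For \textbf{generation}, the key is that $2x, 2y, (k+1)z^k$ is a regular sequence in the polynomial ring $S := \CC[x,y,z]$, so its syzygies over $S$ are generated by the Koszul syzygies, i.e.\ by $\widetilde{V}^x, \widetilde{V}^y, \widetilde{V}^z$. Given a derivation $(a,b,c) \in R^3$ of $R$, I would lift it to $S$ and write $2x\,a + 2y\,b + (k+1)z^k c = h f$ for some $h \in S$. Using $2(k+1)f = 2x\cdot(k+1)x + 2y\cdot(k+1)y + (k+1)z^k\cdot 2z$, the triple $\bigl(a - \tfrac{hx}{2},\, b - \tfrac{hy}{2},\, c - \tfrac{hz}{k+1}\bigr)$ is an honest $S$-syzygy of the gradient, hence an $S$-combination of the Koszul syzygies; the subtracted correction is exactly $\tfrac{h}{2(k+1)}$ times the coefficient vector of $\widetilde{\Lambda}$. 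Reducing modulo $f$ gives $(a,b,c) = \bar g\, \widetilde{\Lambda} + (\text{$R$-combination of } \widetilde{V}^x, \widetilde{V}^y, \widetilde{V}^z)$ for some $\bar g \in R$. Thus, writing $K := \CC[\widetilde{M}_k]\widetilde{V}^x + \CC[\widetilde{M}_k]\widetilde{V}^y + \CC[\widetilde{M}_k]\widetilde{V}^z$, we obtain $\Theta := \mathrm{Der}_\CC(R) = K + R\widetilde{\Lambda}$.

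To pin down the complement, I would record three direct computations in $R$: using $x^2 + y^2 + z^{k+1} = 0$ one checks $x\widetilde{\Lambda} = z\widetilde{V}^y - \tfrac{k+1}{2} y\widetilde{V}^z \in K$, symmetrically $y\widetilde{\Lambda} \in K$, and $z^k \widetilde{\Lambda} = y\widetilde{V}^x - x\widetilde{V}^y \in K$. Since $(x,y)\widetilde{\Lambda} \subseteq K$, every $g\widetilde{\Lambda}$ is congruent modulo $K$ to $\bar g\, \widetilde{\Lambda}$ with $\bar g \in R/(x,y) = \CC[z]/(z^{k+1})$; as $z^k\widetilde{\Lambda}\in K$, this reduces further into $L := \mathrm{span}_\CC\{1, z, \dots, z^{k-1}\}\widetilde{\Lambda}$, so $\Theta = K + L$. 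For \emph{directness} $K \cap L = 0$, I would compare $\partial_z$-components: the $\partial_z$-component of any element of $K$ has the form $2(\beta x - \alpha y) \in (x,y)R$, whereas $\sum_{i=0}^{k-1} c_i z^i \widetilde{\Lambda}$ has $\partial_z$-component $2\sum_{i=0}^{k-1} c_i z^{i+1}$; lying in $K$ would force $\sum_{i} c_i z^{i+1} \in (x,y)R$, i.e.\ $\sum_i c_i z^{i+1} = 0$ in $\CC[z]/(z^{k+1})$, whence all $c_i = 0$. This simultaneously shows that the classes of $\widetilde{\Lambda}, z\widetilde{\Lambda}, \dots, z^{k-1}\widetilde{\Lambda}$ are independent and that $L$ is a genuine direct complement of $K$, giving $\Theta = K \oplus L$ as claimed.

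The main obstacle is the generation step: everything downstream is a finite direct computation, but one must justify cleanly that the Koszul syzygies together with the single Euler correction exhaust \emph{all} derivations. This rests on the regular-sequence property of the gradient in $S$ and on tracking the $f$-multiple precisely, so that the correction is recognized as a multiple of $\widetilde{\Lambda}$ rather than an unexpected new generator. A secondary point worth stating carefully is that the complement is exactly $z^0, \dots, z^{k-1}$ and not more: the $\partial_z$-component argument above pins this down, and it is reassuring that $k = \dim_\CC \CC[z]/(z^k)$ equals the Tjurina number of the $A_k$-singularity, of which $\Theta/K$ is visibly a model.
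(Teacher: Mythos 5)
Your proposal is correct, and its skeleton (the same four generators, the relations $x\widetilde{\Lambda}, y\widetilde{\Lambda}, z^{k}\widetilde{\Lambda} \in \CC[\widetilde{M}_k]\widetilde{V}^x + \CC[\widetilde{M}_k]\widetilde{V}^y + \CC[\widetilde{M}_k]\widetilde{V}^z$, and the directness check via the $\partial/\partial z$-coefficient) coincides with the paper's. The two verification steps you flag as the real content are, however, done by genuinely different means. For generation, the paper does not invoke the Koszul complex: it first uses the $\partial/\partial z$-coefficients $-2y, 2x, 2z$ of $\widetilde{V}^x, \widetilde{V}^y, \widetilde{\Lambda}$ to reduce any tangential field $W$ (which vanishes at the origin) to one of the form $a\,\partial/\partial x + b\,\partial/\partial y$, and then solves the remaining relation $2xa+2yb = q\cdot(x^2+y^2+z^{k+1})$ by hand via divisibility ($xy \mid q$, $x \mid b$, $y \mid a$), concluding $W' = \tfrac{a'}{2}\widetilde{V}^z$. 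Your regular-sequence argument, with the Euler relation $2(k+1)f = f_x\cdot(k+1)x + f_y\cdot(k+1)y + f_z\cdot 2z$ producing the $\widetilde{\Lambda}$-correction, is cleaner, makes the appearance of exactly one extra generator conceptually inevitable, and generalizes verbatim to any quasi-homogeneous isolated hypersurface singularity (your identification of the quotient with the Tjurina algebra $\CC[z]/(z^k)$ is exactly the right picture); the paper's elimination is more elementary and is run in parallel with the analogous $D_k$ computation, where the same scheme turns up the additional generator $\widehat{\Kappa}$. For non-symplecticity of $\widetilde{\Lambda}$, the paper computes $d\,i_{\widetilde{\Lambda}}\widetilde{\omega}$ explicitly, whereas you use that $\widetilde{\Lambda}$ is the Euler field and $\widetilde{\omega}$ has weighted degree $2$, so $\mathcal{L}_{\widetilde{\Lambda}}\widetilde{\omega} = 2\widetilde{\omega}$; your constant $2$ is the correct one (the paper's display ends with $=\widetilde{\omega}$, which appears to be a harmless arithmetic slip, since either way the form is not closed). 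No gaps.
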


\begin{proof}
The normal direction is given by 
\[
N = 2x dx + 2y dy + (k+1)z^{k} dz
\]
The following vector fields are then easily seen to be tangential:
\[
\begin{alignedat}{4}
\widetilde{V}^x &:= & & \phantom{+} (k+1)z^{k} \frac{\partial}{\partial y}& - 2y \frac{\partial}{\partial z}& \\
\widetilde{V}^y &:= & -(k+1)z^{k} \frac{\partial}{\partial x}& & + 2x \frac{\partial}{\partial z}& \\
\widetilde{V}^z &:= & 2y \frac{\partial}{\partial x}& - 2x \frac{\partial}{\partial y}& 
\\
\widetilde{\Lambda} &:=  &(k+1)x \frac{\partial}{\partial x} & + (k+1)y \frac{\partial}{\partial y} & + 2z \frac{\partial}{\partial z} &
\end{alignedat} 
\]

The volume form on $\widetilde{M}_k \setminus \{0\}$ is given by
\[
\widetilde{\omega} = \frac{dx \wedge dy}{(k+1) z^k} = \frac{dy \wedge dz}{2x} = \frac{dz \wedge dx}{2y}
\]
Any other volume form will be a product of $\widetilde{\omega}$ with a nowhere vanishing holomorphic function. 

A straightforward computation shows:
\[
i_{\widetilde{V}^x} \widetilde{\omega} = - dx, \quad
i_{\widetilde{V}^y} \widetilde{\omega} = - dy, \quad
i_{\widetilde{V}^z} \widetilde{\omega} = - dz
\]
Hence, the vector fields $\widetilde{V}^x, \widetilde{V}^y, \widetilde{V}^z$ are Hamiltonian, and in particular holomorphic symplectic or volume preserving. 

However, $\Lambda$ does not preserve $\widetilde{\omega}$:
\begin{align*}
i_{\widetilde{\Lambda}} \widetilde{\omega}  &= \frac{x dy - y dx}{z^k} \\
d i_{\widetilde{\Lambda}} \widetilde{\omega} &= \frac{2}{z^k} dx \wedge dy + \frac{k}{z^{k+1}}(x dy \wedge dz + y dz \wedge dx) \\
 & = \widetilde{\omega} \neq 0
\end{align*}

Note that $dz(\widetilde{V}^x) = -2y$, $dz(\widetilde{V}^y) = 2x$ and $dz(\widetilde{\Lambda}) = 2z$. Hence, for any tangential vector field $W$ which must necessarily vanish in the origin, there exist $f^x, f^y, f^\Lambda \in \CC[\widetilde{M}_k]$ such that
\[
dz(\underbrace{W - f^x \widetilde{V}_x - f^y \widetilde{V}_y - f^{\widetilde{\Lambda}} \widetilde{\Lambda}}_{=: W'}) = 0.
\]
Next, we can write $W' = a \frac{\partial}{\partial x} + b \frac{\partial}{\partial y}$ with the coefficients $a,b \in \CC[x,y,z]$ satisfying
\[
2x a + 2y b = q \cdot (x^2 + y^2 + z^{k+1})
\]
for some $q \in \CC[x,y,z]$. This requires that $x y \mid q$ as well as $x \mid b$ and $y \mid a$. Writing $q = 2 x y q'$, $a = y a'$ and $b' = x b'$, we obtain
\[
a' + b' = q' \cdot (x^2 + y^2 + z^{k+1}) \Longleftrightarrow a' = -b' \mod (x^2 + y^2 + z^{k+1})
\]
It follows that $W' = \frac{a'}{2} \widetilde{V}^z$.

Further, we observe that 
\begin{align*}
x \widetilde{\Lambda} &= z \widetilde{V}^y - \frac{k+1}{2} y \widetilde{V}^z \\
y \widetilde{\Lambda} &= \frac{k+1}{2} x \widetilde{V}^z -  z \widetilde{V}^x \\
z^{k} \widetilde{\Lambda} &= y \widetilde{V}^x - x \widetilde{V}^y
\end{align*}
Comparing the coefficients in front of $\frac{\partial}{\partial z}$, we also see that \[
\widetilde{\Lambda}, z \widetilde{\Lambda}, z^2 \widetilde{\Lambda}, \dots, z^{k-1} \widetilde{\Lambda} \notin \left( \CC[\widetilde{M}_k] \widetilde{V}^x + \CC[\widetilde{M}_k] \widetilde{V}^y +\CC[\widetilde{M}_k] \widetilde{V}^z \right)
. \qedhere \]
\end{proof}

\begin{lemma}
\label{lemma-Dk-sing}
We consider the surface $\widehat{M}_k$ given by $x(y^2 + x^{k-2}) + z^2 = 0$ with $k \in \NN, k \geq 4$, i.e.\ the model surface for an isolated $D_k$-singularity.
Then the $\CC[\widehat{M}_k]$-algebra of polynomial vector fields on $\widehat{M}_k$ is given by
\begin{equation*}
\left( \CC[\widehat{M}_k] \widehat{V}^x + \CC[\widehat{M}_k] \widehat{V}^y +\CC[\widehat{M}_k] \widehat{V}^z \right)  \oplus \mathrm{span}_{\CC} \{1, x, x^2, \dots, x^{k-2} \} \cdot \widehat{\Lambda} \oplus \CC \cdot \widehat{\Kappa}
\end{equation*}
where the vector fields $\widehat{V}^x, \widehat{V}^y, \widehat{V}^z$ are Hamiltonian w.r.t.\ the induced holomorphic symplectic form on $\widehat{M}_k \setminus \{0\}$, but $\widehat{\Lambda}$ and $\widehat{\Kappa}$ are not holomorphic symplectic vector fields. 

\end{lemma}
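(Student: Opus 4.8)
The plan is to mirror the proof of Lemma~\ref{lemma-Ak-sing}, the new feature being the extra generator $\widehat{\Kappa}$, which reflects the fact that the Milnor algebra of a $D_k$-singularity is not spanned by powers of a single coordinate. Write $f = x(y^2 + x^{k-2}) + z^2 = xy^2 + x^{k-1} + z^2$, so that the conormal direction is $N = df$ with $\partial_x f = y^2 + (k-1)x^{k-2}$, $\partial_y f = 2xy$, $\partial_z f = 2z$. As in the $A_k$-case I would take the three Hamiltonian fields to be the Koszul fields $\widehat{V}^x = 2z\,\frac{\partial}{\partial y} - 2xy\,\frac{\partial}{\partial z}$, $\widehat{V}^y = -2z\,\frac{\partial}{\partial x} + (y^2+(k-1)x^{k-2})\,\frac{\partial}{\partial z}$, $\widehat{V}^z = 2xy\,\frac{\partial}{\partial x} - (y^2+(k-1)x^{k-2})\,\frac{\partial}{\partial y}$, together with the weighted Euler field $\widehat{\Lambda} = 2x\,\frac{\partial}{\partial x} + (k-2)y\,\frac{\partial}{\partial y} + (k-1)z\,\frac{\partial}{\partial z}$ for the weights $(w_x,w_y,w_z) = (2,k-2,k-1)$ and total degree $2(k-1)$, and I would set $\widehat{\Kappa} := y\,\widehat{\Lambda}$. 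With $\widehat{\omega} = \frac{dx\wedge dy}{2z} = \frac{dy\wedge dz}{y^2+(k-1)x^{k-2}} = \frac{dz\wedge dx}{2xy}$ a direct check gives $i_{\widehat{V}^x}\widehat{\omega} = -dx$, $i_{\widehat{V}^y}\widehat{\omega} = -dy$, $i_{\widehat{V}^z}\widehat{\omega} = -dz$, so these three are Hamiltonian; since $\widehat{\omega}$ is weight-homogeneous of weight $w_x + w_y - w_z = 1$, one has $\mathcal{L}_{\widehat{\Lambda}}\widehat{\omega} = \widehat{\omega}\neq 0$, and a one-line computation gives $d\,i_{\widehat{\Kappa}}\widehat{\omega} = (k-1)\,y\,\widehat{\omega}\neq 0$, so neither $\widehat{\Lambda}$ nor $\widehat{\Kappa}$ is symplectic.

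The heart of the argument is the identity $\mathrm{Der}(\CC[\widehat{M}_k]) = \mathcal{M}_0 + \CC[\widehat{M}_k]\,\widehat{\Lambda}$, where $\mathcal{M}_0 := \CC[\widehat{M}_k]\widehat{V}^x + \CC[\widehat{M}_k]\widehat{V}^y + \CC[\widehat{M}_k]\widehat{V}^z$. Here I would use that $\widehat{M}_k$ has an isolated singularity, so that $(\partial_x f, \partial_y f, \partial_z f)$ is a regular sequence in $\CC[x,y,z]$ and hence every syzygy of these three elements over $\CC[x,y,z]$ is a combination of the Koszul syzygies, which are (up to sign) exactly $\widehat{V}^x, \widehat{V}^y, \widehat{V}^z$. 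Given a tangential field $W = a\frac{\partial}{\partial x} + b\frac{\partial}{\partial y} + c\frac{\partial}{\partial z}$ with $a\partial_x f + b\partial_y f + c\partial_z f = qf$, quasi-homogeneity gives the Euler relation $f = \frac{1}{2(k-1)}(w_x x\,\partial_x f + w_y y\,\partial_y f + w_z z\,\partial_z f)$, so that $qf$ is a $\CC[x,y,z]$-combination of the $\partial_i f$ whose coefficients are the entries of $\frac{q}{2(k-1)}\widehat{\Lambda}$. Therefore $W - \frac{q}{2(k-1)}\widehat{\Lambda}$ is an honest syzygy of $(\partial_x f,\partial_y f,\partial_z f)$ and lands in $\mathcal{M}_0$, giving $W\in\mathcal{M}_0 + \CC[\widehat{M}_k]\widehat{\Lambda}$. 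This is the clean substitute for the explicit $\frac{\partial}{\partial z}$-elimination of the $A_k$-case.

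To cut $\CC[\widehat{M}_k]\widehat{\Lambda}$ down to the stated finite-dimensional complement, I would record the relations $\partial_j f\cdot\widehat{\Lambda}\in\mathcal{M}_0$ for $j\in\{x,y,z\}$. Writing $\kappa_{ij} = \partial_i f\,\frac{\partial}{\partial x_j} - \partial_j f\,\frac{\partial}{\partial x_i}$, one has $\sum_i w_i x_i\,\kappa_{ij} = \widehat{\Lambda}(f)\,\frac{\partial}{\partial x_j} - \partial_j f\cdot\widehat{\Lambda}$; since $\widehat{\Lambda}(f) = 2(k-1)f$ yields a trivial field on $\widehat{M}_k$ while the $\kappa_{ij}$ lie in $\mathcal{M}_0$, it follows that $\partial_j f\cdot\widehat{\Lambda}\in\mathcal{M}_0$. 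Hence the Jacobian ideal annihilates $\widehat{\Lambda}$ modulo $\mathcal{M}_0$, and $\CC[\widehat{M}_k]\widehat{\Lambda}$ reduces modulo $\mathcal{M}_0$ to $(\CC[x,y,z]/(\partial_x f,\partial_y f,\partial_z f))\cdot\widehat{\Lambda}$. A short computation shows that this Milnor algebra is $\CC[x,y]/(y^2+(k-1)x^{k-2},\,xy)$ with basis $\{1, x, \dots, x^{k-2}, y\}$, which produces precisely the generators $\{1,x,\dots,x^{k-2}\}\widehat{\Lambda}$ together with $\widehat{\Kappa} = y\widehat{\Lambda}$.

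Finally, the directness is checked as in the $A_k$-case, by comparing the coefficients in front of $\frac{\partial}{\partial z}$. Every element of $\mathcal{M}_0$ has $\frac{\partial}{\partial z}$-coefficient in the ideal $(xy,\, y^2+(k-1)x^{k-2})$, whereas $(\sum_{j=0}^{k-2}c_j x^j + c_{\Kappa}\,y)\widehat{\Lambda}$ has $\frac{\partial}{\partial z}$-coefficient $(k-1)z(\sum_j c_j x^j + c_{\Kappa} y)$; since $z, xz, \dots, x^{k-2}z, yz$ stay linearly independent modulo $(xy,\, y^2+(k-1)x^{k-2})$ in $\CC[\widehat{M}_k]$, all constants $c_j, c_{\Kappa}$ must vanish, establishing both independence from $\mathcal{M}_0$ and mutual independence of the finitely many generators. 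I expect the main obstacle to be the surjectivity step $\mathrm{Der}(\CC[\widehat{M}_k]) = \mathcal{M}_0 + \CC[\widehat{M}_k]\widehat{\Lambda}$: one must keep the Koszul generation of syzygies as a statement over $\CC[x,y,z]$ via the regular sequence, and absorb the term $qf$ into $\widehat{\Lambda}$ by quasi-homogeneity, since, unlike in the $A_k$-case, the coefficients $dz(\widehat{V}^x), dz(\widehat{V}^y), dz(\widehat{\Lambda})$ no longer generate the full maximal ideal and a single-coordinate elimination is not available.
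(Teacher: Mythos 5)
Your proposal is correct, but it reaches the key structural identity by a genuinely different route than the paper. The paper proves surjectivity by hand: it first rules out tangential fields with $dx(W)=f^x(y)\neq 0$, then successively subtracts multiples of $\widehat{V}^y$, $\widehat{\Lambda}$ and $\widehat{V}^x$ to kill coefficients, and finishes with an explicit divisibility analysis of $2xyb+2zc=q\cdot(xy^2+x^{k-1}+z^2)$ to identify the residue as a multiple of $\widehat{\Kappa}$; it then lists the individual relations $z\widehat{\Lambda}, y\widehat{\Lambda}, xy\widehat{\Lambda}, x^{k-1}\widehat{\Lambda}, x\widehat{\Kappa}, y\widehat{\Kappa}\in\CC[\widehat{M}_k]\widehat{V}^x+\CC[\widehat{M}_k]\widehat{V}^y+\CC[\widehat{M}_k]\widehat{V}^z+\cdots$ one by one to cut the complement down to finite dimension. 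You instead invoke the regular-sequence/Koszul description of the syzygies of $(\partial_xf,\partial_yf,\partial_zf)$ together with the Euler relation to get $\mathrm{Der}=\mathcal{M}_0+\CC[\widehat{M}_k]\widehat{\Lambda}$ in one stroke, and then observe that the Jacobian ideal kills $\widehat{\Lambda}$ modulo $\mathcal{M}_0$, so the complement is the Tjurina ($=$ Milnor) algebra $\CC[x,y]/(y^2+(k-1)x^{k-2},xy)$ acting on $\widehat{\Lambda}$, with basis $\{1,x,\dots,x^{k-2},y\}$. This is cleaner, explains where the extra generator comes from (the class of $y$ in the Milnor algebra), and generalizes immediately to any quasi-homogeneous isolated hypersurface singularity, at the cost of importing the Cohen--Macaulay/Koszul exactness input; the paper's computation is longer but entirely elementary. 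Two small reconciliations: your $\widehat{\Kappa}:=y\widehat{\Lambda}$ is not the paper's $\widehat{\Kappa}=(y^2+x^{k-2})\frac{\partial}{\partial y}+yz\frac{\partial}{\partial z}$, but $y\widehat{\Lambda}=\widehat{V}^z+(k-1)\widehat{\Kappa}$, so the two complements agree modulo $\mathcal{M}_0$ and yield the same (equivalent) direct-sum statement; and your weight computation $\mathcal{L}_{\widehat{\Lambda}}\widehat{\omega}=(w_x+w_y-w_z)\widehat{\omega}=\widehat{\omega}$ and $d\,i_{y\widehat{\Lambda}}\widehat{\omega}=(k-1)y\,\widehat{\omega}$ match the paper's explicit calculations. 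The independence check via the $\frac{\partial}{\partial z}$-coefficients is sound, provided you note (as you implicitly do) that the coefficient of a derivation of $\CC[\widehat{M}_k]$ is the well-defined element $W(z)$ and that $z\cdot p(x,y)$ lies in the ideal $(xy,\,y^2+(k-1)x^{k-2})$ of $\CC[\widehat{M}_k]$ only for $p=0$ when $p$ ranges over the span of the basis monomials.
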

\begin{proof}
The normal direction is given by 
\[
N = (y^2 + (k-1)x^{k-2}) dx + 2xy dy + 2z dz
\]
The following vector fields are then easily seen to be tangential:
\[
\begin{alignedat}{4}
\widehat{V}^x &:= & & \phantom{+} 2z\frac{\partial}{\partial y}& - 2xy \frac{\partial}{\partial z}& \\
\widehat{V}^y &:= & -2z \frac{\partial}{\partial x}& & + (y^2+(k-1)x^{k-2}) \frac{\partial}{\partial z}& \\
\widehat{V}^z &:= & 2xy \frac{\partial}{\partial x}& - (y^2+(k-1)x^{k-2}) \frac{\partial}{\partial y}& 
\\
\widehat{\Lambda} &:=  &2x \frac{\partial}{\partial x} & + (k-2)y \frac{\partial}{\partial y} & + (k-1) z \frac{\partial}{\partial z} & \\
\widehat{\Kappa}  &:= & & \phantom{+} (y^2 + x^{k-2}) \frac{\partial}{\partial y}& + yz \frac{\partial}{\partial z}& 
\end{alignedat} 
\]

The volume form on $\widehat{M}_k \setminus \{0\}$ is given by
\[
\widehat{\omega} = \frac{dx \wedge dy}{2z} = \frac{dy \wedge dz}{y^2 + (k-1)x^{k-2}} = \frac{dz \wedge dx}{2xy}
\]

A straightforward computation shows:
\[
i_{\widehat{V}^x} \widehat{\omega} = - dx, \quad
i_{\widehat{V}^y} \widehat{\omega} = - dy, \quad
i_{\widehat{V}^z} \widehat{\omega} = - dz
\]

However, $\widehat{\Lambda}$ and $\widehat{\Kappa}$ do not preserve $\widehat{\omega}$:
\begin{align*}
i_{\widehat{\Lambda}} \widehat{\omega}   &= \frac{2x dy - (k-2)y dx}{2 z} \\
d i_{\widehat{\Lambda}} \widehat{\omega} &= \frac{k}{2z} dx \wedge dy + \frac{1}{2 z^2}(2x dy \wedge dz + (k-2) y dz \wedge dx) \\
                             &= \widehat{\omega} \neq 0 \\
i_{\widehat{\Kappa}} \widehat{\omega} &= \frac{-z}{2x} dx \\
d i_{\widehat{\Kappa}} \widehat{\omega}  &= \frac{-dz \wedge dx}{2x}
                             = y \widehat{\omega} \neq 0
\end{align*}
We observe that a vector field $W$ with $dx(W) = f^x(y)$ for a non-zero function $f^x(y) \in \CC[y]$ is never tangential: $N(W)$ contains a summand $y^2 f^x(y)$ which can never be compensated $\mod x(y^2 + x^{k-2}) + z^2$. 

Note that $dx(\widehat{V}^y) = -2z$, $dz(\widehat{V}^z) = 2xy$ and $dz(\widehat{\Lambda}) = 2x$. Hence, for any tangential vector field $W$ which must necessarily vanish in the origin, there exist $f^y, f^{\widehat{\Lambda}} \in \CC[\widehat{M}_k]$ such that
\[
dx(\underbrace{W - f^y \widehat{V}_y - f^{\widehat{\Lambda}} \widehat{\Lambda}}_{=: W'}) = 0.
\]
If $z$ divides any summand of $dy(W')$, then we consider $W'' = W' - f^{x} \widehat{V}^x$ for a suitable $f^x \in \CC[\widehat{M}_k]$ such that $z$ does not divide any summand of $dy(W'')$, i.e.\ we can
write $W'' = b \frac{\partial}{\partial y} + c \frac{\partial}{\partial z}$ with the coefficients $b \in \CC[x,y]$ and $c \in \CC[x,y,z]$ satisfying
\[
2xy b + 2z c = q \cdot (x y^2 + x^{k-1} + z^{2})
\]
for some $q \in \CC[x,y,z]$. This implies that
\[
z \mid (2xyb - q(xy^2 + x^{k+1})
\]
which is only possible if the term vanishes. As a consequence, we obtain $q = 2 q' y$ and $b = q' \cdot (y^2 + x^{k-1})$, and
\[
W'' = f^{\widehat{\Kappa}} \widehat{\Kappa}
\]
with $f^{\widehat{\Kappa}} = q' \in \CC[x,y]$.

Further, we observe that 
\begin{align*}
 z \widehat{\Lambda} &= -x \widehat{V}^y + \frac{k-2}{2} y \widehat{V}^x \\
 y \widehat{\Lambda} &=  -V^z + (k-1) K \\
xy \widehat{\Lambda} &=  x \widehat{V}^z - \frac{k-1}{2} z \widehat{V}^x \\
x^{k-1} \widehat{\Lambda} &= (-xy^2 - z^2) \widehat{\Lambda} \\
 x \widehat{\Kappa} &= \frac{1}{2} z \widehat{V}^x \\
 y \widehat{\Kappa} &= z \widehat{V}^y - y \widehat{V}^z - x^{k-2} \widehat{\Lambda} 
\end{align*}

Comparing the coefficients in front of $\frac{\partial}{\partial x}$ and $\frac{\partial}{\partial y}$, we also see that \[
\widehat{\Lambda}, x \widehat{\Lambda}, x^2 \widehat{\Lambda}, \dots, x^{k-2} \widehat{\Lambda}, \widehat{\Kappa} \notin \left( \CC[\widehat{M}_k] \widehat{V}^x + \CC[\widehat{M}_k] \widehat{V}^y +\CC[\widehat{M}_k] \widehat{V}^z \right)
. \qedhere \]
\end{proof}

\begin{proposition}
\label{prop-Markov-spanning}
Let $M$ be a normal affine complex surface with isolated singularities of type $A_k$ or $D_k$. Assume further that $M$ admits a holomorphic symplectic form on its regular part. Let $\Theta^j, j \in \{1, \dots, N\},$ be Hamiltonian vector fields on $M$ that span the tangent space in every regular point of $M$. Then every holomorphic symplectic vector field $\Xi$ on $M$ can be written as
\[
\Xi = \sum_{j=1}^{N} f_j \Theta^j
\]
where $f_j \colon M \to \CC$ are holomorphic functions.
\end{proposition}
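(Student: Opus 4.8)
The plan is to recast the claim as a statement about coherent sheaves and then reduce it to a purely local computation at the singular points, where Lemmas~\ref{lemma-Ak-sing} and~\ref{lemma-Dk-sing} apply. Let $\mathcal{G}\subseteq\mathcal{T}_M$ denote the coherent subsheaf of the tangent sheaf generated by $\Theta^1,\dots,\Theta^N$. Since the $\Theta^j$ span $T_qM$ at every regular point $q$, Nakayama's lemma gives $\mathcal{G}_q=(\mathcal{T}_M)_q$ there, so $\mathcal{G}$ and $\mathcal{T}_M$ agree on $M_{\mathrm{reg}}$. Consider the surjection $\holo_M^N\to\mathcal{G}$, $(f_j)\mapsto\sum_j f_j\Theta^j$, and let $\mathcal{K}$ be its (coherent) kernel, the sheaf of relations. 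The obstruction to lifting a global section $\Xi\in\Gamma(M,\mathcal{G})$ to a tuple $(f_j)$ lies in $H^1(M,\mathcal{K})$, which vanishes by Cartan's Theorem~B since $M$ is affine, hence Stein. Thus it suffices to prove that the germ at $p$ of every symplectic $\Xi$ lies in $\mathcal{G}_p$ at each of the finitely many (isolated) singular points $p$.

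Fix such a $p$ and choose local coordinates realizing $p$ as the model $A_k$ singularity (the $D_k$ case is identical, using Lemma~\ref{lemma-Dk-sing} in place of Lemma~\ref{lemma-Ak-sing}). Because ADE singularities are quasi-homogeneous, the Poincar\'e lemma holds near $p$ by Reiffen--Saito (Remark~\ref{rem-Poincare}), so the symplectic germ $\Xi$ is Hamiltonian: $i_\Xi\widetilde\omega=dh$ for some $h\in\holo_p$. The identities $i_{\widetilde V^w}\widetilde\omega=-dw$ of Lemma~\ref{lemma-Ak-sing} then give $\Xi=-\bigl((\partial_x h)\widetilde V^x+(\partial_y h)\widetilde V^y+(\partial_z h)\widetilde V^z\bigr)$ on the punctured neighborhood, and hence everywhere, since both sides are holomorphic. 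In particular $\Xi$ lies in the submodule $\mathcal{V}_p$ generated by the Hamiltonian model fields $\widetilde V^x,\widetilde V^y,\widetilde V^z$ and has no component along the non-symplectic generators $\widetilde\Lambda,z\widetilde\Lambda,\dots,z^{k-1}\widetilde\Lambda$ of the direct-sum decomposition of Lemma~\ref{lemma-Ak-sing}. The same computation applied to the (Hamiltonian) fields $\Theta^j$ shows $\mathcal{G}_p\subseteq\mathcal{V}_p$, so the task reduces to the equality $\mathcal{G}_p=\mathcal{V}_p$.

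This last equality is the core of the argument and the step I expect to be the main obstacle: one must upgrade the hypothesis that the $\Theta^j$ span at regular points to the statement that they generate the \emph{entire} Hamiltonian submodule $\mathcal{V}_p$ at the singular point. Under the isomorphism $V\mapsto i_V\widetilde\omega$ on the punctured neighborhood, combined with extension across $p$ by normality (Riemann removability in codimension~$2$), generation of $\mathcal{V}_p$ by the $\Theta^j$ is equivalent to the differentials $dh_1,\dots,dh_N$ of their Hamiltonians generating the stalk $\Omega^1_{M,p}$; by Nakayama this holds precisely when the covectors $dh_j(p)$ span the three-dimensional Zariski cotangent space at $p$. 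This is genuinely delicate, since spanning of the $\Theta^j$ at regular points does not by itself control the $dh_j$ in the singular fibre, where all the model fields vanish; the decisive input is the explicit degeneration of the conormal $N$ (equivalently of $\nabla F$) as one approaches the ADE singularity, as recorded in Lemmas~\ref{lemma-Ak-sing} and~\ref{lemma-Dk-sing}. The cleanest way to close this gap is to note that the coordinate functions restrict to global Hamiltonians whose differentials $dx,dy,dz$ already generate $\Omega^1_{M,p}$, and to verify that the corresponding fields lie in $\mathcal{G}_p$; this forces $\mathcal{G}_p=\mathcal{V}_p\ni\Xi$. Combining with the first paragraph then yields the desired global representation $\Xi=\sum_{j=1}^N f_j\Theta^j$.
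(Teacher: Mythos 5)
Your overall architecture (coherent subsheaf $\mathcal{G}$ generated by the $\Theta^j$, Nakayama on the regular part, Cartan~B to globalize, reduction to a stalk computation at the finitely many singular points) matches the paper's, but the local step at the singular point has a genuine gap, and it is exactly where the paper spends almost all of its effort. You write ``the Poincar\'e lemma holds near $p$ by Reiffen--Saito, so the symplectic germ $\Xi$ is Hamiltonian: $i_\Xi\widetilde\omega=dh$.'' Reiffen's Poincar\'e lemma is a statement about \emph{germs of holomorphic forms at $p$}, i.e.\ sections of $\Omega^1_{M,p}$; it says nothing about a closed form defined only on the punctured neighborhood. But $i_\Xi\omega$ is a priori only defined on $M_{\mathrm{reg}}$, and by Lemmas~\ref{lemma-Ak-sing} and~\ref{lemma-Dk-sing} a general vector field germ at an $A_k$ or $D_k$ point has a component along $\widetilde\Lambda$ (resp.\ $\widehat\Lambda,\widehat\Kappa$) whose contraction with the symplectic form has a \emph{non-removable} singularity at $p$. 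So the hypothesis of the Poincar\'e lemma is precisely what must be proven: that a symplectic $\Xi$ has no such component. The paper's proof is devoted to this Claim, and it is not formal: because the surface's form $\omega$ differs from the model form $\widetilde\omega$ by an unknown nowhere-vanishing factor $h$, one has to run an explicit limit analysis (separately for $A_k$ and for the several subcases of $D_k$) showing that closedness of $i_W(h\widetilde\omega)$ for a nonzero $\Lambda$/$\Kappa$-contribution $W$ forces $h$ to vanish at the singular point, a contradiction. Your argument skips this entirely, so the identity $\Xi=-\sum(\partial_w h)\widetilde V^w$ and the conclusion $\Xi\in\mathcal{V}_p$ are unsupported.

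The second issue is the step you yourself flag as the main obstacle, $\mathcal{G}_p=\mathcal{V}_p$: your proposed fix (``note that the coordinate functions restrict to global Hamiltonians whose differentials generate $\Omega^1_{M,p}$, and verify that the corresponding fields lie in $\mathcal{G}_p$'') does not follow from the hypotheses. The proposition allows arbitrary Hamiltonian fields $\Theta^j$ that span only at \emph{regular} points; spanning of $T_qM$ (two-dimensional) at regular $q$ does not force the coefficient matrix of the $\Theta^j$ with respect to the three minimal generators $\widetilde V^x,\widetilde V^y,\widetilde V^z$ to have full rank modulo $\mathfrak{m}_p$, so there is no a priori reason the fields corresponding to $dx,dy,dz$ lie in $\mathcal{G}_p$. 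The paper handles this by working with the coherent sheaf $\mathcal{H}$ of symplectic sections (kernel in the exact sequence $0\to\mathcal{H}\to\mathcal{T}\to\Omega^1\to\Omega^2\to 0$) rather than with the full module $\mathcal{V}_p$, and argues that the $\Theta^j$ generate $\mathcal{H}$ at the singular points after the Claim is established. In short: correct skeleton, but the two substantive local statements are asserted rather than proved, and the first one cannot be obtained by citing the Poincar\'e lemma.
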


\begin{corollary}
\label{cor-Markov-spanning}
Let $M$ be a surface of Markov-type with the natural symplectic form $\omega$ on its regular part. 
Then every holomorphic symplectic vector field $\Xi$ on $M$ can be written as
\begin{equation}
\label{eq-globalsectionsympl}
\Xi = f^x V^x + f^y V^y + f^z V^z
\end{equation}
for suitable holomorphic functions $f^x, f^y, f^z \colon M \to \CC$.
\end{corollary}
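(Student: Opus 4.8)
The plan is to deduce this corollary directly from Proposition \ref{prop-Markov-spanning} by supplying the required input data for the Markov-type surface $M$. First I would invoke Proposition \ref{prop-Markov-singularities}, which guarantees that $M$ is a normal affine complex surface (normality of a hypersurface with isolated singularities being automatic in dimension two by Serre's criterion) whose only singularities are isolated and of type $A_k$ ($k \leq 5$) or $D_4$. Thus $M$ satisfies the hypotheses of Proposition \ref{prop-Markov-spanning} on the singularity type. Second, $M$ carries the natural holomorphic symplectic form $\omega$ on its regular part, exactly as required.

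The remaining ingredient is a finite collection of Hamiltonian vector fields spanning the tangent space at every regular point. Here I would take the three tangential vector fields $V^x, V^y, V^z$ from Lemma \ref{lemma-completeness}. By the Example following Lemma \ref{lemma-spanning}, these are Hamiltonian, since $i_{V^x}\omega = d(-x)$, $i_{V^y}\omega = d(-y)$, $i_{V^z}\omega = d(-z)$. By Remark \ref{rem-spanning}, the coefficient matrix of $V^x, V^y, V^z$ fails to have full rank only at points satisfying $2x + Eyz - A = 0$, $2y + Exz - B = 0$, $2z + Exy - C = 0$; but these are precisely the equations $N = 0$ defining the singular locus $M_{\mathrm{sing}}$. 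Hence $V^x, V^y, V^z$ span $T_pM$ at every \emph{regular} point $p \in M_{\mathrm{reg}}$, which is exactly the spanning hypothesis of Proposition \ref{prop-Markov-spanning} with $N = 3$ and $(\Theta^1, \Theta^2, \Theta^3) = (V^x, V^y, V^z)$.

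With these three inputs verified, Proposition \ref{prop-Markov-spanning} applies verbatim and yields, for any holomorphic symplectic vector field $\Xi$ on $M$, a representation
\[
\Xi = f^x V^x + f^y V^y + f^z V^z
\]
with holomorphic coefficient functions $f^x, f^y, f^z \colon M \to \CC$, which is the assertion \eqref{eq-globalsectionsympl}. I do not anticipate any genuine obstacle in this corollary itself: all the real work is contained in Proposition \ref{prop-Markov-spanning} and in the local analysis of the $A_k$ and $D_k$ germs (Lemmas \ref{lemma-Ak-sing} and \ref{lemma-Dk-sing}), where one must check that the ``extra'' non-symplectic generators $\widetilde{\Lambda}$ and $\widehat{\Lambda}, \widehat{\Kappa}$ do not appear in the expansion of a symplectic field near a singularity, so that a symplectic $\Xi$ is already a $\CC[M]$-combination of $V^x, V^y, V^z$ alone. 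The only point requiring a moment's care in the corollary is confirming that $M$ is indeed normal and that its singularities fall under the cases covered by Proposition \ref{prop-Markov-spanning}; both follow immediately from Proposition \ref{prop-Markov-singularities}, since $D_4$ is the only $D$-type case occurring and $A_k$ with $k \leq 5$ is covered.
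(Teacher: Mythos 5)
Your proposal is correct and follows essentially the same route as the paper: it cites Proposition \ref{prop-Markov-singularities} for the singularity types, Remark \ref{rem-spanning} for the spanning of the tangent space at regular points by the Hamiltonian fields $V^x, V^y, V^z$, and then applies Proposition \ref{prop-Markov-spanning}. The extra remarks on normality and on the Hamiltonian property of $V^x, V^y, V^z$ are accurate but not needed beyond what the paper already records.
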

\begin{proof}
By Proposition \ref{prop-Markov-singularities} only isolated singularities of type $A_k$ and $D_4$ occur in a Markov-type surface. The Hamiltonian vector fields $V^x, V^y, V^z$ span the tangent spaces in the regular part of $M$ by Remark \ref{rem-spanning}.
\end{proof}

\begin{proof}[Proof of Proposition \ref{prop-Markov-spanning}]
Let $\omega$ be the natural volume form on $M_{\mathrm{reg}}$ and $s \in M_{\mathrm{sing}}$. Let $\widetilde{\omega}$ be the standard volume form on the model surface for the isolated singularity $s$, and let $\Phi$ be the locally biholomorphic change of coordinates of the ambient $\CC^3$ to this model surface $\widetilde{M}$ with $\Phi(s) = 0$. Then we have $\Phi_\ast \omega = h \cdot \widetilde{\omega}$ for some nowhere vanishing holomorphic function $h$ in a neighborhood of $0$. This function extends to the singularity $0$, and it must not vanish in $0$, for otherwise it would vanish on a curve on the surface through $0$.

Therefore, by Lemma \ref{lemma-Ak-sing} and Lemma \ref{lemma-Dk-sing}, the germ of a holomorphic vector field $V$ on $M$ in a singularity $s \in M$ contains a contribution of $\Phi^\ast \left( \mathrm{span}_{\CC} \{1, z, \dots, z^{k-1}\} \widetilde{\Lambda} \right)$ in the case of an $A_k$-singularity or a contribution of $\Phi^\ast \left( \mathrm{span}_{\CC} \{1, x, \dots, x^{k-2}\}\widehat{\Lambda} \oplus \CC \cdot \widehat{K} \right)$ in the case of a $D_k$-singularity if and only if $i_V \omega$ has a non-removable singularity in $s$:
\[
\Phi_\ast i_V \omega = i_{\Phi_\ast V} \Phi_\ast \omega = i_{\Phi_\ast V} h \widetilde{\omega} = h i_{\Phi_\ast V} \widetilde{\omega}
\]

\noindent\textbf{Claim:} Such a contribution can't be symplectic.

\noindent\textbf{Proof in case of an $A_k$-singularity:}
	
Let $\Phi^\ast W \in \Phi^\ast \left( \mathrm{span}_{\CC} \{1, z, \dots, z^{k-1}\} \widetilde{\Lambda} \right)$. Assume to get a contradiction that $\Phi^\ast W$ is symplectic, i.e. $0 = d i_{\Phi^\ast W} \omega$.
\[
\Longrightarrow 0 = \Phi_\ast(d i_{\Phi^\ast W} \omega) = d i_W \Phi_\ast \omega = d i_W (h \widetilde{\omega}) = d(fh) \wedge i_{\widetilde{\Lambda}} \widetilde{\omega} + fh \cdot \widetilde{\omega}
\]
where $0 \neq f(z) \in \CC[z]$ is of degree at most $k-1$ and such that $W = f(z) \widetilde{\Lambda}$. On the model surface for the $A_k$-singularity we can now write explicitly, after multiplication with $z^k/f(z)$:
\[
d(fh) \wedge \frac{x dy - y dx}{f} + h \cdot \frac{dx \wedge dy}{(k+1)} = 0
\]
We first take the limit $(x,y) \to (0,0)$ and then $z \to 0$ which yields $h(0)=0$, a contradiction!

\noindent\textbf{The proof in case of an $D_k$-singularity is similar:}

We have $W = f(x) \widehat{\Lambda} + \varepsilon \widehat{\Kappa}$
where $f(x) \in \CC[x]$ is a polynomial of degree at most $k-2$ and $\varepsilon \in \CC$ where at least $f \neq 0$ or $\varepsilon \neq 0$. 
	
On the model surface for the $D_k$-singularity we can write explicitly, after multiplication with $2z$:
\begin{align*}
d(f h) \wedge (2x dy - (k-2) y dx) + \varepsilon dh \wedge \frac{-z^2 dx}{2x} + h \cdot (f + \varepsilon y) dx \wedge dy = 0
\end{align*}
If $f(0) \neq 0$, we take first take the limit $z \to 0$, then $(x,y) \to (0,0)$ and obtain $h(0) = 0$, a contradiction!

Note that $df = f'(x) dx$, hence $df \wedge dx = 0$. If $f(0) = 0$ and $\varepsilon \neq 0$, we first take the limit $z \to 0$, then $x \to 0$ and obtain $h(0,y,0) \cdot y = 0$, but $h$ must not vanish for (small) $y \neq 0$, a contradiction!

The remaining case is $f(0) = 0$ and $\varepsilon = 0$. We write $f(x) = x^m f_1(x)$ with $f_1(0) \neq 0$ and divide the equation by $f$:
\[
(df / f \cdot h + dh) \wedge (2x dy - (k-2) y dx)  + h \cdot dx \wedge dy = 0
\] 
We first take the limit $(y,z) \to (0,0)$. 
Note that \[
\frac{\partial h}{\partial z} dz \wedge 2x dy = \frac{\partial h}{\partial z} xz \frac{y^2 + (k-1) x^{k-2}}{-xy^2-x^{k-1}} dx \wedge dy<
\] 
which already vanishes. This leads to the following ODE for $h$:
\[
(m + 1) \cdot h(x,0,0) + 2x \frac{\partial h(x,0,0)}{\partial x} = 0
\]
Unless $h$ vanishes identically, the solution of this ODE blows up in $0$, a contradiction! This proves the claim. 

\bigskip

Once we know that the inner product of the symplectic vector fields with the symplectic form extends to the singularities and that the Poincaré lemma (see Remark \ref{rem-Poincare} above) holds for the singularities we encounter here, we can proceed in the usual way:
The subsheaf $\mathcal{H}$ of holomorphic symplectic sections of the tangent sheaf $\mathcal{T}$ on $M$ is coherent, since all the other sheaves in the following exact sequence are coherent:
\[
\begin{tikzcd}
 0 \arrow[r] & \mathcal{H} \arrow[r] & \mathcal{T} \arrow[r, "V \mapsto i_V \omega"] & \Omega^1 \arrow[r, "d"] & \Omega^2 \arrow[r] & 0 
\end{tikzcd}
\]
By an application of the Nakayama lemma, the germs of the Hamiltonian vector fields $\Theta^j$ generate the stalk of the tangent sheaf in every point of the regular part of $M$. In particular, they also generate the stalk of $\mathcal{H}$ in every point of the regular part. From the above discussion it follows that the $\Theta^j$ also generate the stalk for $\mathcal{H}$ in every singular point. By an application of Cartan's Theorem B, every global section is of the form \eqref{eq-globalsectionsympl}.
\end{proof}

\begin{example}
For the original Markov surface
\[
x^2 + y^2 + z^2 = 3xyz
\]
the holomorphic linear combinations of $V^x, V^y, V^z$ only miss a one-dimensional subspace of germs in the origin which corresponds to $\CC \widetilde{\Lambda}$ in the model surface $\widetilde{M}_1$. In the Markov surface, a locally defined vector field corresponding to $\widetilde{\Lambda}$ (modulo $\CC[M] V^x + \CC[M] V^y + \CC[M] V^z$) is given by:
\begin{align*}
\Lambda &=
(x + \frac{1}{2}yz + \frac{3}{4}xy^2 + \dots) \frac{\partial}{\partial x} \\
&+ (y + \frac{1}{2}xz + \frac{3}{4}yz^2 + \dots) \frac{\partial}{\partial y}
+ (z + \frac{1}{2}xy + \frac{3}{4}zx^2 + \dots) \frac{\partial}{\partial z}
\end{align*}
However, $d i_\omega \Lambda = (-2 + \frac{4z}{2z - 3xy} - \frac{4z^2}{(2z - 3xy)^2} + \dots ) \omega \neq 0$.
\end{example}

\section{Proof of the Main Theorem}
\label{sec-main}

Recall that on the regular part of $M$, the holomorphic volume form is given by
\begin{equation*}
\omega = \frac{dx \wedge dy}{2z + Exy - C} = \frac{dy \wedge dz}{2x + Eyz - A} = \frac{dz \wedge dx}{2y + Exz - B}
\end{equation*}
Since $M$ is a surface, the holomorphic volume form $\omega$ is also a holomorphic symplectic form.

From Equation \eqref{eq-poisson-general} and the definition of the form $\omega$ we obtain the following defining relations for the Poisson brackets:
\begin{align*}
\po{x}{y} &= 2z + Exy - C\\
\po{y}{z} &= 2x + Eyz - A\\
\po{z}{x} &= 2y + Exz - B
\end{align*}

By Lemma \ref{lemma-completeness} and its corollary we know that the vector fields corresponding to the Hamiltonians $x^k, y^k, z^k$ are complete: $V^x$, $V^y$, and $V^z$ are the vector fields corresponding to the Hamiltonian functions $x$, $y$ and $z$, respectively. Hence we consider the Lie algebra generated these Hamiltonian functions on $M$:
\[
L := \lie(1, x^k, y^k, z^k \,:\, k \in \NN)
\]

\begin{lemma}
\label{lemma-step1}
We have
\[
x^k y, \, x^k z, \, x y^k, \, y^k z, \, x z^k, \, y z^k  \in L
\]
for all $k \in \NN$.
\end{lemma}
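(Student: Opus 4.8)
The plan is to exploit the Leibniz rule for the Poisson bracket together with an induction on $k$ and the cyclic symmetry of the defining equation. First I would record the two basic computations. Since the Poisson bracket is a derivation in each argument, for every $k \geq 1$ we have
\[
\po{x^k}{y} = k x^{k-1} \po{x}{y} = k x^{k-1}(2z + Exy - C) = 2k x^{k-1} z + Ek x^k y - Ck x^{k-1},
\]
and likewise
\[
\po{x^k}{z} = -k x^{k-1} \po{z}{x} = -k x^{k-1}(2y + Exz - B) = -2k x^{k-1} y - Ek x^k z + Bk x^{k-1}.
\]
Both left-hand sides lie in $L$, because $x^k, y, z \in L$ and $L$ is a Lie subalgebra; moreover every pure power $x^{k-1}$ lies in $L$ by definition.

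Next I would prove by induction on $k$ the combined statement that $x^k y \in L$ and $x^k z \in L$ for all $k \geq 1$. For the base case $k = 1$, the relation $\po{x}{y} = 2z + Exy - C$ together with $z, 1 \in L$ forces $Exy \in L$, and since $E \neq 0$ we conclude $xy \in L$; symmetrically $xz \in L$. For the inductive step, assume $x^{k-1} y, x^{k-1} z \in L$. In the first displayed identity the terms $2k x^{k-1} z$ and $Ck x^{k-1}$ already lie in $L$, so $Ek x^k y \in L$ and hence $x^k y \in L$; in the second identity the terms $2k x^{k-1} y$ and $Bk x^{k-1}$ lie in $L$, so $Ek x^k z \in L$ and hence $x^k z \in L$. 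Note that the two families feed into each other across levels: obtaining $x^k y$ requires $x^{k-1} z$ while obtaining $x^k z$ requires $x^{k-1} y$, which is precisely why they must be carried through the induction together.

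Finally I would invoke the symmetry of the Markov-type equation under the simultaneous cyclic permutation $(x,y,z) \mapsto (y,z,x)$, $(A,B,C) \mapsto (B,C,A)$, which preserves the symplectic form (a cyclic permutation of three coordinates is orientation-preserving) and therefore maps $L$ into itself. Applying this symmetry once sends $x^k y \mapsto y^k z$ and $x^k z \mapsto x y^k$, and applying it twice yields $x z^k$ and $y z^k$; together with the two families already produced, this covers all six monomials in the statement.

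The argument has no serious obstacle; the only points requiring care are the use of $E \neq 0$ to isolate the mixed monomials from the bracket relations after subtracting the pure powers, and the observation that the two families $x^k y$ and $x^k z$ must be established simultaneously, since each inductive step consumes a lower member of the other family.
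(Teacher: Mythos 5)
Your proof is correct and follows essentially the same route as the paper: the Leibniz rule for the Poisson bracket, an induction on $k$ that isolates the monomial carrying the coefficient $E \neq 0$ after subtracting known terms, and the observation that the families $x^k y$ and $x^k z$ feed into each other and must be carried through the induction together. The only cosmetic differences are that you compute $\po{x^k}{y}$ where the paper computes $\po{x}{x^{k-1}y}$ (proportional identities), and that you dispose of the remaining four families via the cyclic symmetry of the defining equation rather than writing out all six brackets as the paper does.
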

\begin{proof}
From the Poisson brackets $\po{x}{y}$, $\po{y}{z}$ and $\po{z}{x}$ we immediately obtain that $xy, yz, zx \in L$. This covers the case $k=1$. 
We proceed by induction on $k$ with the induction hypothesis that $x^k y, y^k z, z^k x, x^k z, y^k x, z^k y \in L$:
\begin{align*}
\{ x, x^k y \} 
 &= 2x^k z + E x^{k+1} y - C x^k \\
\{ y, y^k z \} 
 &= 2y^k x + E y^{k+1} z - A y^k \\ 
\{ z, z^k x \} 
 &= 2z^k y + E z^{k+1} x - B z^k \\ 
\{ x, x^k z \} 
 &= -2x^k y - E x^{k+1} z + B x^k \\ 
\{ y, y^k x \} 
 &= -2y^k z - E y^{k+1} x + C y^k \\ 
\{ z, z^k y \} 
 &= -2z^k x - E z^{k+1} y + A z^k 
\end{align*}
On each line, the left-hand side is contained in $L$ by the induction assumption, and two out of three summands on the right-hand side are contained in $L$ by the induction assumption as well. And thus we obtain the desired new terms (those with coefficient $\pm E$) in the induction step. 
\end{proof}

Next, we focus on monomials of degree $k+2$ involving a factor of $x^k$.

\begin{lemma}
\label{lemma-step2}
$x^{k}yz, \; x^{k}y^2, \; x^{k}z^2 \in L$ for all $k \in \NN$
\end{lemma}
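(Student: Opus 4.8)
The plan is to obtain all three families at once by a single induction on $k$, feeding in the two-variable monomials already produced in Lemma \ref{lemma-step1}. Two kinds of relations will drive the argument: the ``diagonal'' Poisson brackets $\po{x^k y}{y}$ and $\po{x^k z}{z}$, which isolate the squares, and the defining equation of $M$ multiplied by $x^{k-1}$, which handles the mixed term $x^k yz$.

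Concretely, I would first record, using only the Leibniz rule and the defining relations for the brackets,
\[
\po{x^k y}{y} = 2k\, x^{k-1}yz + Ek\, x^k y^2 - Ck\, x^{k-1}y, \qquad \po{x^k z}{z} = -2k\, x^{k-1}yz - Ek\, x^k z^2 + Bk\, x^{k-1}z,
\]
both of which lie in $L$, since $x^k y, x^k z \in L$ by Lemma \ref{lemma-step1}, $y,z \in L$, and $L$ is closed under the Poisson bracket. Next, multiplying the defining equation $x^2+y^2+z^2+Exyz-Ax-By-Cz-D=0$ by $x^{k-1}$ gives, as functions on $M$,
\[
E\, x^k yz + x^{k-1}y^2 + x^{k-1}z^2 \equiv 0 \pmod{L},
\]
because the remaining summands $x^{k+1}, x^k, x^{k-1}y, x^{k-1}z, x^{k-1}$ all lie in $L$ (powers of $x$ are generators, and $x^{k-1}y,x^{k-1}z\in L$ by Lemma \ref{lemma-step1}).

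These relations close the induction. For the base case $k=1$ the last identity reads $E\,xyz = -(x^2+y^2+z^2)+Ax+By+Cz+D$, so $xyz\in L$ as $E\neq 0$; and since $yz\in L$, the two bracket identities give $xy^2,xz^2\in L$. For the inductive step, assuming $x^{k-1}yz, x^{k-1}y^2, x^{k-1}z^2 \in L$, the defining-equation relation forces $x^k yz \in L$ (its two partners $x^{k-1}y^2, x^{k-1}z^2$ are in $L$ by hypothesis), and then the two bracket identities force $x^k y^2, x^k z^2 \in L$ (their common partner $x^{k-1}yz$ is in $L$ by hypothesis, and $E\neq 0$). Thus one strand of the induction passes from the squares at level $k-1$ to the mixed term at level $k$, and the other strand from the mixed term at level $k-1$ to the squares at level $k$.

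The step where a naive attempt stalls — and hence the one I would flag — is producing the mixed monomial $x^k yz$. Every bracket of two already-available monomials that could create it (for instance $\po{x^k y}{z}$, $\po{x^2}{yz}$, or $\po{xy}{xz}$) has its $Exyz$-type contribution cancel by antisymmetry, leaving only squares; so the bracket machinery alone never yields the $yz$-strand. The resolution is to abandon brackets for this term and invoke the equation of $M$, which expresses $Exyz$ linearly in the squares and lower-degree monomials. This is precisely what couples the ``$yz$'' strand and the ``squares'' strand and makes the single induction close; everything else is the routine verification of the two bracket identities above.
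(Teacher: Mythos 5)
Your proof is correct. Both bracket identities check out ($\po{x^k y}{y} = k x^{k-1} y \po{x}{y}$ and $\po{x^k z}{z} = k x^{k-1} z \po{x}{z}$, and the inputs $x^k y, x^k z, x^{k-1}y, x^{k-1}z$ are all supplied by Lemma \ref{lemma-step1}), the relation obtained from multiplying the defining equation by $x^{k-1}$ is valid modulo $L$, and the two-strand induction closes: squares at level $k-1$ give the mixed term at level $k$ via the equation of $M$, and the mixed term at level $k-1$ gives the squares at level $k$ via the brackets (using $E \neq 0$ throughout). Your route is in the same spirit as the paper's but organized differently: the paper computes $\po{x^{k-p}y}{x^p z}$ at $p=0$ and $p=k$, which yields two relations involving only the three monomials $x^{k-1}y^2$, $x^{k-1}z^2$, $x^k yz$; combined with the defining equation this gives a $3\times 3$ linear system with nonzero determinant that is solved directly at each level $k$, with no induction. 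Your brackets $\po{x^k y}{y}$ and $\po{x^k z}{z}$ are simpler to compute but couple level $k-1$ to level $k$, which is why you need the induction to close the system; the trade-off is a shorter computation at the cost of a slightly more elaborate logical structure. Your closing observation --- that antisymmetry of the bracket kills every attempt to manufacture $x^k yz$ directly, forcing recourse to the defining equation --- is exactly the mechanism the paper's proof also relies on, there in the guise of substituting $z^2 = -x^2 - y^2 - Exyz + \dots$ into the difference of the two bracket relations.
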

\begin{proof}
First treat the case $k=1$. We have $xyz \in L$ by the defining equation of $M$, and $x y^2, x z^2 \in L$ by Lemma \ref{lemma-step1}. 

We will now treat the general case. The left-hand side of the following computation is contained in $L$ by Lemma \ref{lemma-step1}:
\begin{align*}
\po{x^{k-p}y}{x^pz} &= x^{k} \po{y}{z} + p x^{k-1} z \po{y}{x} + (k-p) y x^{k-1} \po{x}{z} \\
&= - 2(k-p) x^{k-1}y^2 - 2p x^{k-1} z^2 -(k-1) Ex^{k}yz \\ 
&\qquad +2x^{k+1} - Ax^{k}  + Cp x^{k-1} z + B(k-p) x^{k-1} y 
\end{align*}
Evaluating the right-hand side for $p=0$ and for $p=k$, respectively, and taking into account the known summands due to Lemma \ref{lemma-step1} we obtain also
\begin{align}
- 2k x^{k-1}y^2 -(k-1) Ex^{k}yz \in L \label{eq-xyz-1} \\
- 2k x^{k-1}z^2 -(k-1) Ex^{k}yz \in L \label{eq-xyz-2}
\end{align}
for every $k \in \NN$. 
Taking their difference and using the defining equation, this yields
\begin{align}
&L \ni -2kx^{k-1}y^2 + 2kx^{k-1}z^2 \nonumber \\
&= - 2kx^{k-1}y^2 + 2kx^{k-1}(-x^2 -y^2 -Exyz + Ax + By + Cz + D) \nonumber \\
&= -4kx^{k-1}y^2 - 2kEx^{k}yz \mod L \label{eq-xyz-3}
\end{align}
Combining the Equations \eqref{eq-xyz-1}, \eqref{eq-xyz-2} and \eqref{eq-xyz-3}, we finally obtain 
\[x^k y z, x^{k-1} y^2, x^{k-1} z^2 \in L. \qedhere\]
\end{proof}


Now, we find all the remaining monomials on $M$.

\begin{lemma}
\label{lemma-step3}
$x^{k}y^m \in L, x^{k}y^{m-1}z \in L$ for $k, m \in \NN$.
\end{lemma}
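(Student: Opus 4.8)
The plan is to build up all monomials $x^k y^m$ and $x^k y^{m-1} z$ by a double induction, using the previous two lemmas as the base of the induction and Poisson brackets of known elements to climb in the $y$-degree. By Lemma \ref{lemma-step2} we already have $x^k y z, x^k y^2, x^k z^2 \in L$ for all $k$, and by Lemma \ref{lemma-step1} we have $x^k y, x^k z \in L$; together with the pure powers $x^k, y^k, z^k \in L$ these furnish all cases with $y$-degree (or the relevant total degree) at most $2$. The induction will be on $m$, the exponent of $y$, with the hypothesis that $x^k y^j$ and $x^k y^{j-1} z$ lie in $L$ for all $k \in \NN$ and all $j \leq m$.

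For the inductive step I would compute Poisson brackets of the form $\po{x^a y}{x^b y^{m-1} z}$ and $\po{z}{x^k y^m}$ (and symmetric variants), expanding via the Leibniz rule and the three defining relations $\po{x}{y} = 2z + Exy - C$, $\po{y}{z} = 2x + Eyz - A$, $\po{z}{x} = 2y + Exz - B$. Each such bracket produces, as its highest-weight term, a new monomial such as $x^k y^{m+1}$ or $x^k y^m z$ with coefficient $\pm E$ (from the $Exy$, $Eyz$, $Exz$ pieces of the relations), while all the other summands have strictly smaller $y$-degree and hence belong to $L$ by the inductive hypothesis. Since $E \neq 0$, we can solve for the new monomial. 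Wherever the bracket yields two genuinely new top-degree monomials simultaneously (analogous to the $x^{k-1}y^2$ and $x^k yz$ entanglement in Lemma \ref{lemma-step2}), I would record two independent linear combinations obtained from different choices of the split of the $x$-exponent, and additionally use the defining equation $x^2 + y^2 + z^2 + Exyz - Ax - By - Cz - D = 0$ of $M$ to reduce $z^2$ (equivalently, to trade a $z^2$-monomial for lower-order ones), exactly as in the passage leading to \eqref{eq-xyz-3}. This supplies enough independent relations to isolate each of $x^k y^{m+1}$, $x^k y^m z$ individually.

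The main obstacle I anticipate is bookkeeping the coupled top-degree terms so that the resulting linear system is actually solvable: a single Poisson bracket typically mixes a $y^{m+1}$-monomial with a $y^m z$-monomial (and, after using the relation on $M$, with a $y^{m-1} z^2 \equiv \text{lower}$ term), so one bracket alone is underdetermined. The resolution, as in Lemma \ref{lemma-step2}, is to generate several relations by varying how the factor $x^k$ is distributed between the two arguments of the bracket, then take suitable $\CC$-linear combinations and invoke the defining equation of $M$ to eliminate the $z^2$ contribution; because $E \neq 0$ the coefficient matrix of the leading terms is invertible and every desired monomial is extracted. Once $x^k y^m$ and $x^k y^{m-1} z$ are obtained for all $k$, the induction on $m$ closes, completing the proof.
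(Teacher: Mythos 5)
Your overall strategy---induction on $m$, Poisson brackets of known elements with the power of $x$ distributed in different ways between the two arguments, and the defining equation of $M$ used to trade a $z^2$-monomial for lower-order ones---is exactly the route the paper takes: it computes $\po{x^{k-p}y^m}{x^p z}$ for $p=0$ and $p=k$, reduces the resulting $x^{k-1}y^{m-1}z^2$ term via the defining equation, and solves the small linear system for $x^{k-1}y^{m+1}$ and $x^k y^m z$.

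The one substantive gap is your justification of the final step: you assert that ``because $E \neq 0$ the coefficient matrix of the leading terms is invertible and every desired monomial is extracted.'' That is not the right reason, and for the most natural choice of brackets the claim is simply false for certain $(k,m)$: in the paper's relation \eqref{eq-xyyz-1} the new monomial $x^k y^m z$ carries the coefficient $(m-k)E$, which vanishes when $k=m$ no matter what $E$ is, so after extracting $x^{k-1}y^{m+1}$ one cannot solve for $x^m y^m z$ from that relation. The paper must treat $k=m$ separately, by returning to \eqref{eq-xyyz-3} and applying the defining equation a second time. Your alternative brackets $\po{x^a y}{x^b y^{m-1}z}$ with $a+b=k$ do in fact repair this: after eliminating $z^2$, the relation they produce is $\bigl(-2a(m+1)+2k\bigr)x^{k-1}y^{m+1} + \bigl(-a(m+1)+k+1\bigr)E\,x^k y^m z \in L$, and for two distinct values $a_1 \neq a_2$ the determinant of the resulting $2\times 2$ system equals $2(a_2-a_1)(m+1)\neq 0$, independently of $E$ (the hypothesis $E\neq 0$ is only needed afterwards, to divide it out of $E\,x^k y^m z$). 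So your plan closes without a case split---but that determinant has to be computed; it does not follow from $E\neq 0$, and until it is checked the possibility of a degenerate pair $(k,m)$, which is precisely what occurs in the paper's own version of this computation, is a genuine hole in the argument.
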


\begin{proof}
By induction on $m$:
Our induction hypothesis is
\[
\forall \, k \in \NN: \quad x^{k}y^m \in L, x^{k}y^{m-1}z \in L
\] We now compute
\begin{align*}
&\po{x^{k-p}y^m}{x^pz} \\ 
&= mx^{k}y^{m-1}\po{y}{z} + pmx^{k-1}y^{m-1}z\po{y}{x} + (k-p)x^{k-1}y^m\po{x}{z} \\
 &= mx^{k}y^{m-1}(2x + Eyz - A) - pmx^{k-1}y^{m-1}z(2z + Exy - C) \\ &\quad- 2(k-p)x^{k-1}y^{m+1} - (k-p)Ex^{k}y^mz + B(k-p)x^{k-1}y^m  \\
 &= 2mx^{k+1}y^{m-1} - Amx^{k}y^{m-1} + B(k-p)x^{k-1}y^m - 2(k-p)x^{k-1}y^{m+1}\\
 &\quad+ (m-pm-(k-p))Ex^{k}y^{m}z - 2pmx^{k-1}y^{m-1}z^2 + Cpmx^{k-1}y^{m-1}z 
\end{align*}
Evaluating for $p=0$ and for $p=k$, we obtain the following new terms:
\begin{align}
-2k x^{k-1} y^{m+1} + (m-k)E x^k y^m z \in L \label{eq-xyyz-1} \\
-2km x^{k-1} y^{m-1} z^2 + m(1-k) E x^k y^m z \in L \label{eq-xyyz-3}
\end{align}
Taking the linear combination $\frac{1-k}{2k} \eqref{eq-xyyz-1} - \frac{m-k}{2km} \eqref{eq-xyyz-3}$ and using the defining equation, this yields
\begin{align}
L \ni -(1-k) x^{k-1} y^{m+1} + (m-k) x^{k-1} y^{m-1} z^2 \nonumber  \\
= -(1-k) x^{k-1} y^{m+1} \nonumber \\ + (m-k) x^{k-1} y^{m-1} (-x^2 - y^2 - Exyz + Ax + By + Cz + D) \nonumber \\
= -(1 + m - 2k) x^{k-1} y^{m+1} - E(m-k) x^{k} y^{m} z \mod L \label{eq-xyyz-4}
\end{align}
The sum $\eqref{eq-xyyz-1} + \eqref{eq-xyyz-4}$ yields $-(1 + m) x^{k-1} y^{m+1} \in L$. Hence, we also obtain $x^k y^m z \in L$ for $k \neq  m$.

For $k = m$, evaluate again \eqref{eq-xyyz-3}:
\begin{align*}
L \ni -2m^2 x^{m-1} y^{m-1} z^2 + m(1-m) E x^m y^m z \\
= 2m^2 x^{m-1} y^{m-1} (x^2 + y^2 + Exyz - Ax - By - Cz -D) \\	 + m(1-m) E x^m y^m z \\
= 2m^2 x^{m-1} y^{m+1} + m(1+m) E x^{m} y^{m} z \mod L
\end{align*}
Since we already have $x^{m-1} y^{m+1} \in L$, we finally obtain $E x^{m} y^{m} z \in L$.
\end{proof}

We are now ready to prove the Hamiltonian density property and the symplectic/volume density property.

\begin{proof}[Proof of Theorem \ref{thm-main-1}]
Every polynomial function in $\CC[M]$ can be written as a finite sum of the form
\[
\sum_{k,m=0}^{\infty} a_{km} x^k y^m + \sum_{k,m=0}^{\infty} b_{km} x^k y^m z
\]
with $a_{km}, b_{km} \in \CC$.
By the Lemmas \ref{lemma-step1}, \ref{lemma-step2} and \ref{lemma-step3} we have \[ L = \lie(1, x^k, y^k, z^k) = \CC[M].\]
Thanks to Lemma \ref{lemma-completeness} and its corollary, all of the vector fields corresponding to the Hamiltonians $1, x^k, y^k, z^k$ are complete and extend to the singular part of $M$. Since $\CC[M]$ is dense in $\holo(M)$, this establishes the Hamiltonian density property for $M$.
\end{proof}

\begin{proof}[Proof of Theorem \ref{thm-main-2}]
Since $M$ is Stein, Cartan's Theorem B ensures that the coherent sheaves (see Grauert and Kerner \cite{MR0170354} for the singular situation) of holomorphic functions $\holo$ and of holomorphic $p$-forms $\Omega^p$ are acyclic. 
Consider the following sequence:
\[
0 \to \CC \to \mathcal{O} \to \Omega^1 \to \dots
\]
By Cantat and Loray \cite{MR2649343}*{Lemma 3.6} $M$ is simply connected. By a result of Reiffen \cite{MR0223599}, the Poincar{\'e} Lemma holds in the singular case if a certain local holomorphic contractibility condition is satisfied. For hypersurfaces, this condition is equivalent to the germ of the hypersurface being given by a so-called quasi-homogeneous polynomial thanks to a result of Saito \cite{MR0294699}. By Proposition \ref{prop-Markov-singularities}, $M$ only has only isolated singularities of type $A_k$ and $D_4$. ADE singularities are always quasi-homogeneous.

Therefore, the above sequence is exact and gives an acyclic resolution of the sheaf $\CC$. By the formal de Rham lemma, the first holomorphic de Rham cohomology group $H_d^1(M)$ is isomorphic to $H^1(X, \CC) = 0$. 
Finally, by Lemma \ref{lemma-deRhamtrivial} with $\Omega = X = M$ and $\ell = 1$, the notions of holomorphic symplectic vector field and of Hamiltonian vector field coincide on $M$. The choice of $\ell = 1$ is justified by the analysis of the germs of vector fields in the singularities that is summed up in Corollary \ref{cor-Markov-spanning}.
\end{proof}

\begin{proof}[Proof of Theorem \ref{thm-groupdesc}]
Let $F \colon M \to M$ be any holomorphic symplectic automorphism. If $F$ is in the path-connected component of the identity, then by definition there exists a $\cont^1$-smooth $\varphi_t \colon [0,1] \to \mathrm{Aut}_\omega(M)$ such that $\varphi_0 = \id$ and $\varphi_1 = F$. 
Since $H^1_d(M) = 0$, we can choose $\Omega = M$ in Theorem \ref{thm-AL}. Since the Hamiltonian functions $1, x^k, y^k, z^k$ $(k \in \NN)$ were the generators, the corresponding flows generate the identity component of the group. As the Hamiltonian $1$ corresponds to the identity map, it can be omitted. 
\end{proof}

%
%
%
%
%

\section{Tame Sets}
\label{sec-tame}

On $\CC^n, n \geq 2$, or more generally on a Stein manifold with the (volume, symplectic, Hamiltonian) density property, the group of holomorphic automorphisms acts $m$-transitively for any $m \in \NN$. However, one can't expect to prescribe automorphisms on any infinite closed discrete subsets. 

For this reason, tame sets in $\CC^n, n \geq 2,$ have been introduced by Rosay and Rudin \cite{MR0929658}. Any two tame sets in $\CC^n$ can be mapped onto each other by a holomorphic automorphism. On the other hand, Rosay and Rudin also showed that non-tame sets exist in $\CC^n$. In fact, a result proved later by Winkelmann implies that every Stein manifold contains non-tame infinite closed discrete subsets \cite{MR1827508}.

The notion of tameness was generalized independently in different ways to complex manifolds by Ugolini and the author \cite{MR3906368} and by Winkelmann \cite{MR3978035}. For $\CC^n, n \geq 2,$ all three notions agree. We adapt the definition from \cite{MR3906368} to the singular situation:

\begin{definition}
Let $X$ be a normal reduced complex space. Let $G$ be a group that acts on $X$ through holomorphic automorphisms. An infinite closed discrete subset $A \subset X_{\mathrm{reg}}$ is called \emph{$G$-tame} if for every injective self-map $\imath \colon A \to A$ there exists $F \in G$ such that $F|A = \imath$.
\end{definition}

\begin{remark}
As a direct consequence of the definition, any infinite subset $B \subseteq A$ of a tame set $A$ is again tame.
\end{remark}

\begin{proposition}[\cite{MR3906368}]
Let $X$ be a normal reduced Stein space with the Hamiltonian density property or the symplectic density property. Let $(a_j)_{j \in \NN} \subset X$ and $(b_j)_{j \in \NN} \subset X$ be tame closed discrete subsets. Then there exists a holomorphic automorphism $F \colon X \to X$ such that $F(a_j) = b_j$ for all $j \in \NN$. 
\end{proposition}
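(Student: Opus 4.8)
The plan is to reduce the index-respecting problem of realizing $a_j \mapsto b_j$ to two applications of the definition of tameness, linked through an auxiliary sequence. Suppose for the moment that we have produced a closed discrete sequence $(c_j)_{j \in \NN} \subset X_{\mathrm{reg}}$ whose underlying set $C := \{c_j\}$ is disjoint from $\{a_j\} \cup \{b_j\}$ and such that both $A' := \{a_j\} \cup C$ and $B' := \{b_j\} \cup C$ are tame. The injective involution of $A'$ that swaps $a_j \leftrightarrow c_j$ is in particular an injective self-map, so tameness furnishes $G_1 \in G$ with $G_1(a_j) = c_j$ for all $j$; likewise the swap $b_j \leftrightarrow c_j$ of $B'$ yields $G_2 \in G$ with $G_2(b_j) = c_j$, whence $G_2^{-1}(c_j) = b_j$. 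Then $F := G_2^{-1} \circ G_1 \in G$ satisfies $F(a_j) = b_j$ for every $j$. If $\{a_j\}$ and $\{b_j\}$ happen to be disjoint with $\{a_j\} \cup \{b_j\}$ already tame, one can take the swap directly on $\{a_j\} \cup \{b_j\}$ and dispense with $C$.

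Thus everything rests on constructing the auxiliary tame sequence $(c_j)$ with the two union-tameness properties. This is the substantive point, since tameness is not preserved under unions in general, so $C$ must be chosen in sufficiently general position and escaping to infinity sparsely enough relative to both $\{a_j\}$ and $\{b_j\}$. I would fix an exhaustion $K_1 \subset K_2 \subset \cdots$ of $X$ by $\holo(X)$-convex compacta and select the points $c_j$ one by one in the regular part, placing them in the successive shells $K_{n+1} \setminus K_n$ away from $A$ and $B$, arranging that every finite initial union stays $\holo(X)$-convex and that its tail can be separated from it by a Runge neighborhood.

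The verification that $A'$ and $B'$ are then tame --- equivalently, that an arbitrary injective self-map of such a set is realized by an element of $G$ --- is itself an Anders\'{e}n--Lempert induction, and this is where the density property enters. To realize a prescribed injective self-map one builds a sequence of (symplectic) automorphisms via Theorem \ref{thm-AL}, each step moving finitely many further points to their targets by the $m$-transitivity of Corollary \ref{cor-inftrans} and extending to all of $X$ through the Runge property, while approximating the identity to high order on the previously fixed compactum so that the infinite composition converges. Since $A$, $B$ and $C$ all lie in $X_{\mathrm{reg}}$, and Theorem \ref{thm-AL} already fixes $X_{\mathrm{sing}}$ to order $\ell$, the singularities cause no additional difficulty here.

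The hard part, and the genuine role of the tameness hypothesis, is the convergence bookkeeping: one must guarantee that the points not yet treated stay uniformly away from the region being modified at each stage, so that the limiting map is a biholomorphism of $X$ and the images of the tails remain closed and discrete rather than collapsing or accumulating. This is precisely the separation/Runge condition that tameness encodes, and it is what lets the approximation errors in Theorem \ref{thm-AL} be chosen summable along the exhaustion. I expect this control, together with the initial placement of $(c_j)$, to be the main obstacle; it is carried out in \cite{MR3906368}.
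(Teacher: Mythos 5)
The paper does not actually reprove this proposition: it records only that the statement is taken from \cite{MR3906368} and that the proof carries over to the singular setting once Theorem \ref{thm-AL} is available. So your attempt must be judged as a reconstruction of that argument. The formal skeleton of your reduction is fine: if the two swaps $a_j\leftrightarrow c_j$ on $\{a_j\}\cup C$ and $b_j\leftrightarrow c_j$ on $\{b_j\}\cup C$ are realized by automorphisms $G_1,G_2$, then $F=G_2^{-1}\circ G_1$ sends $a_j$ to $b_j$. (Note you do not need full tameness of the unions, only these two particular interpolation problems.)

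The genuine gap is in where you let the tameness hypothesis act. You put all the geometric care into the auxiliary points $c_j$ and then claim the Anders\'en--Lempert induction for $\{a_j\}\cup C$ converges because of ``the separation/Runge condition that tameness encodes.'' With the definition used in this paper, tameness encodes no such condition: it is a purely interpolation-theoretic property, and an abstract tame set may be distributed so that, at each inductive stage, the untreated points of $A$ lying just outside the current compactum are displaced uncontrollably and the infinite composition fails to converge. Indeed, your construction of $C$ (shells of an exhaustion, avoiding $A\cup B$) uses nothing about $A$ beyond closed discreteness; if the ensuing induction always converged, every closed discrete subset of $X_{\mathrm{reg}}$ would sit inside a tame set, hence (infinite subsets of tame sets being tame, as remarked in the paper) would itself be tame, contradicting the existence of non-tame sets \cite{MR1827508}. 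The missing idea --- and the actual use of the hypothesis in \cite{MR3906368} --- is to exploit tameness of $A$ and $B$ \emph{first}: realize the injective self-maps $a_j\mapsto a_{n_j}$ and $b_j\mapsto b_{n_j}$ onto subsequences chosen so sparse relative to a fixed exhaustion by $\holo(X)$-convex compacta that the subsequent inductive Anders\'en--Lempert matching of $(a_{n_j})$ with $(b_{n_j})$ has summable errors; conjugating by these two automorphisms then produces $F$. Once this sparsification step is inserted, your scheme (with or without the auxiliary set $C$) goes through, and in the singular setting Theorem \ref{thm-AL}, together with the requirement that all points lie in $X_{\mathrm{reg}}$, supplies the rest, exactly as the paper indicates.
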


The proposition was stated in \cite{MR3906368} for Stein manifolds with the density property, but its proof carries over to the singular setting using Theorem \ref{thm-AL}, the singular version of the Anders\'en--Lempert.

It is in general a difficult question to establish the existence of tame sets in Stein manifolds. So far, this has been achieved for complex linear groups and their homogeneous space of dimension at least $2$, for the Koras--Russell cubic threefold and Danielewski surfaces, see \cites{MR3906368, tametwo}.

Our goal is to prove that the ordered Markov triples form a tame subset of the Markov surface. We first need three technical lemmas. 

\begin{lemma}
\label{lemma-equivalentconditions}
Let $X := \frac{2x - 3yz}{\gamma(z)}$ and $Y := \frac{2y - 3xz}{\gamma(z)}$ for the same choice of the root $\gamma(z) = \sqrt{4 - 9z^2}$. 
Assume $z \neq 0$ and $4 - 9z^2 \neq 0$. 
Then
\begin{enumerate}
\item $X \pm i y \neq 0$
\item $x \pm i Y \neq 0$
\item $\displaystyle \frac{ix - Y}{i x + Y} \neq \frac{X + iy}{iy - X}$
\end{enumerate}
on the Markov surface $x^2 + y^2 + z^2 = 3xyz$. 
\end{lemma}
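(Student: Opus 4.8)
The plan is to reduce each of the three inequalities to the non-vanishing of a single quadratic expression, where in every case the defining equation $x^2+y^2+z^2=3xyz$ produces a dramatic collapse. Concretely, I would first establish by direct substitution the two identities
\[
X^2 + y^2 = x^2 + Y^2 = \frac{-4z^2}{\gamma(z)^2}, \qquad xX + yY = \frac{-2z^2}{\gamma(z)}.
\]
For the first one computes $\gamma^2(X^2+y^2) = (2x-3yz)^2 + (4-9z^2)y^2 = 4(x^2+y^2-3xyz)$, and the Markov equation rewrites $x^2+y^2-3xyz = -z^2$; the computations for $x^2+Y^2$ and for $xX+yY$ are identical in spirit. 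Under the hypotheses $z\neq 0$ and $4-9z^2 = \gamma(z)^2 \neq 0$, all three right-hand sides are nonzero.

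For (1) and (2) I would then observe that $X\pm iy$ are exactly the two factors of $X^2+y^2 = (X+iy)(X-iy)$, and likewise $x\pm iY$ are the factors of $x^2+Y^2 = (x+iY)(x-iY)$. Since $\CC$ is a field, a product is nonzero precisely when each factor is; hence $X^2+y^2\neq 0$ forces $X\pm iy\neq 0$, and $x^2+Y^2\neq 0$ forces $x\pm iY\neq 0$.

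For (3) I would first check that the statement is even well-posed, i.e.\ that both denominators are nonzero: using $ix+Y = i(x-iY)$ and $iy-X = -(X-iy)$, the non-vanishing of the denominators is exactly assertions (2) and (1). With that in hand, the inequality is equivalent, after clearing denominators, to $(ix-Y)(iy-X) \neq (X+iy)(ix+Y)$. Expanding both sides, the $-xy$ and $XY$ contributions cancel, leaving $(ix-Y)(iy-X) - (X+iy)(ix+Y) = -2i(xX+yY)$. Thus (3) is equivalent to $xX+yY\neq 0$, which is the second identity above.

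I do not expect a genuine obstacle: once the two collapsing identities are in place, all three claims follow at once. The only points requiring care are purely bookkeeping --- verifying that the denominators in (3) are nonzero before cross-multiplying, and tracking the signs and factors of $i$ in the expansion that yields $-2i(xX+yY)$. Note also that the sign ambiguity in $\gamma(z)$ is harmless: claims (1)--(2) depend only on $\gamma(z)^2$, and in (3) the condition $xX+yY = -2z^2/\gamma(z)\neq 0$ is insensitive to the branch of the root.
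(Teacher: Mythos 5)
Your proposal is correct and takes essentially the same route as the paper: both arguments reduce each of the three claims to the single fact that $x^2+y^2-3xyz=-z^2\neq 0$, via the quantities $X^2+y^2$, $x^2+Y^2$ and $xX+yY$ (the paper argues by contraposition, deriving $z=0$ from each hypothetical equality, while you compute the same expressions directly and factor $X^2+y^2=(X+iy)(X-iy)$, etc.). Your additional checks --- that the denominators in (3) are nonzero and that the branch of $\gamma(z)$ is immaterial --- are worthwhile bookkeeping the paper leaves implicit.
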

\begin{proof}
We prove the statements by negation:
\begin{enumerate}
\item \(  
X \pm i y = 0 \Longrightarrow X^2 = -y^2 \Longrightarrow x^2 + y^2 = 3xyz \Longrightarrow z = 0
\)
\item \(
x \pm i Y = 0 \Longrightarrow Y^2 = -x^2 \Longrightarrow x^2 + y^2 = 3xyz \Longrightarrow z = 0
\)
\item \( \displaystyle
\frac{ix - Y}{i x + Y} = \frac{X + iy}{iy - X} \Longrightarrow 0 = x X + y Y \Longrightarrow x^2 + y^2 = 3xyz \Longrightarrow z = 0
\) \qedhere
\end{enumerate} 
\end{proof}

\begin{lemma}
\label{lemma-surjective}
Let $\varphi^z_t$ be the flow map of the vector field $V^z$ on the Markov surface $x^2 + y^2 + z^2 = 3xyz$. Then
\[
\CC \ni t \mapsto \varphi^z_t(x,y,z) \in \{ (u,v) \in \CC^2 \,:\, u^2 + v^2 + z^2 = 3uvz \}
\]
is surjective if $z \neq 0$ and $4 - 9z^2 \neq 0$.
\end{lemma}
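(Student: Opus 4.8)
The plan is to analyze the flow $\varphi^z_t$ explicitly and show it parametrizes the conic fiber $C_z := \{(u,v) : u^2 + v^2 + z^2 = 3uvz\}$. Recall from Lemma~\ref{lemma-completeness} (specialized to $E = -3$, $A = B = C = 0$) that the flow of $V^z$ is a linear-in-$(x,y)$ expression built from $\cos(\gamma(z)t)$ and $\sin(\gamma(z)t)/\gamma(z)$, where $\gamma(z) = \sqrt{4 - 9z^2}$. Since $z \neq 0$ and $4 - 9z^2 \neq 0$, we have $\gamma(z) \neq 0$, so the flow is a genuine (non-degenerate) rotation-type motion. The conic $C_z$ is a smooth affine conic for these values of $z$, hence isomorphic to $\CC^\ast$, and the image of the flow is exactly one orbit of the $\CC$-action $t \mapsto \varphi^z_t$. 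Surjectivity amounts to showing this single orbit is all of $C_z$.

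First I would diagonalize the homogeneous part. The matrix $Q(z)$ has eigenvalues $\pm i\gamma(z)$, so after passing to the eigenbasis the flow becomes multiplication by $e^{\pm i\gamma(z)t}$ on suitable linear coordinates. The natural coordinates are precisely the quantities $X = \frac{2x - 3yz}{\gamma(z)}$ and $Y = \frac{2y - 3xz}{\gamma(z)}$ appearing in Lemma~\ref{lemma-equivalentconditions}, together with $x, y$ themselves; the combinations $X + iy$ and $x + iY$ (and their conjugates) should be the eigenvector coordinates on which the flow acts by $e^{\pm i\gamma(z)t}$. Concretely, I expect to verify that under $\varphi^z_t$ the two quantities $ix - Y$ and $X + iy$ (or some such pairing) transform by reciprocal exponential factors $e^{i\gamma(z)t}$ and $e^{-i\gamma(z)t}$, so that a well-chosen ratio, e.g.\ $\frac{ix - Y}{X + iy}$, is multiplied by $e^{2i\gamma(z)t}$ as $t$ ranges over $\CC$. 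Since $t \mapsto e^{2i\gamma(z)t}$ is surjective onto $\CC^\ast$, this ratio takes every nonzero value.

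The three non-vanishing conditions of Lemma~\ref{lemma-equivalentconditions} are exactly what makes this work: conditions~(1) and~(2) guarantee the denominators $X + iy$ and $x + iY$ never vanish on $C_z$, so the relevant ratios are well-defined everywhere on the fiber; condition~(3) ensures the two eigen-ratios are genuinely independent coordinates and that the map from $C_z$ to the $e^{2i\gamma(z)t}$-values is a bijection onto $\CC^\ast$ rather than collapsing. Thus I would use these three conditions to set up a coordinate $\zeta := \frac{ix - Y}{ix + Y}$ (or the equivalent cross-ratio in item~(3)) that identifies $C_z \cong \CC^\ast$, show $\zeta \circ \varphi^z_t = e^{2i\gamma(z)t} \cdot \zeta$, and conclude that the orbit through any starting point $(x,y) \in C_z$ sweeps out all of $\CC^\ast$, hence all of $C_z$.

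The main obstacle I anticipate is bookkeeping the square-root ambiguity in $\gamma(z)$ consistently across all the eigenvector coordinates, so that the claimed transformation laws hold with the \emph{same} branch used in defining $X$ and $Y$; the proof of Lemma~\ref{lemma-completeness} already notes that the ambiguities cancel, and Lemma~\ref{lemma-equivalentconditions} fixes one branch, so I would lean on that consistency. A secondary point requiring care is verifying that the algebraic identity $xX + yY = \frac{2(x^2+y^2) - 6xyz}{\gamma(z)} = \frac{2(3xyz - z^2) - 6xyz}{\gamma(z)} = \frac{-2z^2}{\gamma(z)}$ (using the defining equation) is nonzero when $z \neq 0$, which is the computational heart of item~(3) and guarantees the cross-ratio coordinate is non-constant along the fiber. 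Once the eigen-coordinate and its transformation law are in hand, surjectivity is immediate from the surjectivity of the complex exponential.
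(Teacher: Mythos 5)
Your route --- diagonalize the flow and push forward an equivariant coordinate on the fiber $C_z$ --- is genuinely different from the paper's, which substitutes $T = e^{i\gamma(z)t}$, turns the two coordinate conditions into quadratic equations in $T$, and uses Lemma~\ref{lemma-equivalentconditions}(3) to show that the two roots of the $u$-quadratic hit the two distinct points of $C_z$ lying over a given $u$ (with a separate case analysis when discriminants vanish). Your eigencoordinate computation is correct: $x+iY$ and $x-iY$ do transform by $e^{-i\gamma(z)t}$ and $e^{+i\gamma(z)t}$ respectively. The gap is in the last step: the function $\zeta = \frac{ix-Y}{ix+Y} = \frac{x+iY}{x-iY}$ does \emph{not} identify $C_z$ with $\CC^\ast$; it is $2$-to-$1$. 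Indeed, writing $V := \frac{2v-3uz}{\gamma(z)}$ for the analogue of $Y$ at a point $(u,v) \in C_z$, one has the identity (a sibling of your $xX+yY$ computation)
\[
(u+iV)(u-iV) \;=\; u^2 + V^2 \;=\; \frac{4\left(u^2+v^2-3uvz\right)}{4-9z^2} \;=\; \frac{-4z^2}{4-9z^2} \;=:\; c \;\neq\; 0 ,
\]
so the denominator of $\zeta$ equals $c/(u+iV)$ and $\zeta = (u+iV)^2/c$. Since $w := u+iV$ \emph{is} a bijection $C_z \to \CC^\ast$ (from $w$ and $c/w$ one recovers $u$, then $V$, then $v$, and every $w \in \CC^\ast$ produces a point of $C_z$), your $\zeta$ is the square map in this coordinate; its fibers are the antipodal pairs $\{(u,v),(-u,-v)\}$. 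Consequently ``$\zeta$ sweeps out all of $\CC^\ast$'' only shows that the orbit meets every such pair, not that it contains every point of $C_z$.

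The fix is short and preserves your strategy: drop the ratio and track $w = u+iV$ itself. Along the orbit $w = (x+iY)\,e^{-i\gamma(z)t}$, the starting value $x+iY$ is nonzero by Lemma~\ref{lemma-equivalentconditions}(2), and $t \mapsto e^{-i\gamma(z)t}$ is onto $\CC^\ast$ because $\gamma(z) \neq 0$; since $w$ is a bijection $C_z \to \CC^\ast$, surjectivity follows at once. (Alternatively, observe that the time-$\pi/\gamma(z)$ flow map is exactly $(u,v)\mapsto(-u,-v)$, so the orbit is saturated under the deck transformation of $\zeta$ and your argument closes.) Note that in this route only parts (1)--(2) of Lemma~\ref{lemma-equivalentconditions} are used; part (3) is specific to the paper's root-counting argument. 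With this repair your proof is arguably cleaner than the paper's three-way case distinction on vanishing discriminants.
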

\begin{proof}
The flow map $\varphi^z_t$ has been computed in Lemma \ref{lemma-completeness}. For a fixed choice of the root $\gamma(z)$, we write $T = \exp(i \gamma(z) t)$. The auxiliary map $\CC \ni t \mapsto T \in \CC^{\ast}$ is obviously surjective since $\gamma(z) \neq 0$ by assumption. Showing surjectivity is then equivalent to solving
\begin{equation}
\label{eq-surjective}
\left\{
\begin{array}{rcl}
u &=& x \cdot \displaystyle \frac{T + 1/T}{2} + Y \cdot \frac{T - 1/T}{2i} \\[12pt]
v &=& -X \cdot \displaystyle \frac{T - 1/T}{2i} + y \cdot \frac{T + 1/T}{2}
\end{array}
\right.
\end{equation}
The case $T = 0$ occurs only if the terms with $1/T$ cancel out, i.e.\ if $x + iY = 0$ and $X + iy = 0$. By Lemma \ref{lemma-equivalentconditions} this can't happen under our assumptions $z \neq 0$ and $4 - 9z^2 \neq 0$. 
We rewrite Equations \eqref{eq-surjective} as
\begin{equation*}
\left\{
\begin{array}{rcl}
0 &=& (ix + Y) T^2 - 2iu T + (ix - Y) \\
0 &=& (iy - X) T^2 - 2iv T + (X + iy)
\end{array}
\right.
\end{equation*}
The quadratic equation in $T$ has a non-zero solution for any $u$ since the zero-order term $ix - Y$ does not vanish. Moreover, also by Lemma \ref{lemma-equivalentconditions}, the equation can't be linear in $T$. Assume for the moment that the discriminant of this quadratic equation does not vanish, i.e.\ there are two distinct non-zero solutions for $T$. Since $v$ must satisfy the quadratic equation $u^2 + v^2 + z^2 = 3uvz$, there are only two possible choices for $v$. Again by Lemma \ref{lemma-equivalentconditions}, the zero-order terms of the normalized quadratic equations involving $u$ and $v$ are not equal. Hence, if the equation involving $u$ has two different solutions for $T$, then they necessarily correspond to two different values for $v$, and thus we obtain surjectivity.
If the discriminant of the quadratic equation involving $u$ vanishes, we reverse the roles of $u$ and $v$ in the preceding discussion. We are left with the case when both discriminants vanish simultaneously:
\begin{equation*}
\left\{
\begin{array}{rcl}
0 &=& -4u^2 - 4(ix + Y)(ix - Y) \\
0 &=& -4v^2 - 4(iy - X)(X + iy)
\end{array}
\right.
\end{equation*}
On the Markov surface $x^2 + y^2 + z^2 = 3xyz$ this is equivalent to
\[
u^2 = \frac{-4z^2}{4 - 9z^2} = v^2
\]
Finally, plugging this into the equation $u^2 + v^2 + z^2 = 3uvz$ yields again $z = 0$ or $4-9z^2 = 0$ which is excluded by assumption.
\end{proof}

\begin{lemma}
\label{lemma-faraway}
Let $z \in \CC$ with $z \neq 0$ and $4 - 9z^2 \neq 0$. Let $\pi^x, \pi^y \colon \CC^2 \to \CC$ denote the projections to the first and to the second coordinate, respectively. Then for any two compacts
\[
K, L \subset \{ (x,y) \in \CC^2 \,:\, x^2 + y^2 + z^2 = 3xyz \}
\]
there exists a non-empty open subset $U \subset \CC$ such that for all $t \in U$
\[
\varphi^z_t(K) \cap (\pi^x)^{-1}(L) = \emptyset \text{ and } \varphi^z_t(K) \cap (\pi^y)^{-1}(L) = \emptyset
\]
\end{lemma}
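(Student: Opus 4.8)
The plan is to read off the blow-up of the two coordinates directly from the explicit flow. Specializing the formulas of Lemma~\ref{lemma-completeness} to the Markov case ($E=-3$, $A=B=C=D=0$) and substituting $T = \exp(i\gamma(z)t)$ exactly as in Lemma~\ref{lemma-surjective}, the coordinates of $\varphi^z_t(x,y,z) = (x(t), y(t), z)$ become
\[
x(t) = \frac{T}{2}(x - iY) + \frac{1}{2T}(x + iY), \qquad y(t) = \frac{T}{2}(y + iX) + \frac{1}{2T}(y - iX),
\]
with $X = \frac{2x-3yz}{\gamma(z)}$ and $Y = \frac{2y-3xz}{\gamma(z)}$ as in Lemma~\ref{lemma-equivalentconditions}. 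The whole argument hinges on the leading ($T$-)coefficients $x - iY$ and $y + iX = i(X - iy)$ being non-zero on the slice $\{x^2+y^2+z^2 = 3xyz\}$, which is precisely the content of Lemma~\ref{lemma-equivalentconditions}(1)--(2) under the standing assumptions $z \neq 0$ and $4 - 9z^2 \neq 0$. Thus, as $|T| \to \infty$, the $T$-term dominates and both $|x(t)|$ and $|y(t)|$ tend to infinity, with no possibility of cancellation.

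Next I would make this uniform over the compact set $K$. The functions $(x,y) \mapsto x - iY$ and $(x,y) \mapsto y + iX$ are continuous and, by the previous paragraph, nowhere vanishing on $K$, so there are constants $0 < c \le C < \infty$ with $|x - iY| \ge c$, $|y + iX| \ge c$ and $|x + iY|, |y - iX| \le C$ for all $(x,y) \in K$. Put $R := \max\bigl\{\, \sup_{(a,b)\in L}|a|,\ \sup_{(a,b)\in L}|b| \,\bigr\}$, which is finite since $L$ is compact. The reverse triangle inequality gives, for every $(x,y) \in K$,
\[
|x(t)| \ge \frac{|T|}{2}\,c - \frac{C}{2|T|}, \qquad |y(t)| \ge \frac{|T|}{2}\,c - \frac{C}{2|T|},
\]
so there is a threshold $R_0 = R_0(c,C,R) > 0$ such that $|T| > R_0$ forces $|x(t)| > R$ and $|y(t)| > R$ simultaneously for every point of $K$. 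In particular no point of $\varphi^z_t(K)$ can have its first coordinate in $\pi^x(L)$ nor its second coordinate in $\pi^y(L)$, which is exactly the disjointness from $(\pi^x)^{-1}(L)$ and from $(\pi^y)^{-1}(L)$.

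It remains to see that the corresponding set of times is open and non-empty. Since $|T| = |\exp(i\gamma(z)t)| = \exp\bigl(\mathrm{Re}(i\gamma(z)t)\bigr)$ and $\gamma(z) \neq 0$, the map $t \mapsto \mathrm{Re}(i\gamma(z)t)$ is a non-constant real-linear functional on $\CC \cong \RR^2$; hence
\[
U := \bigl\{\, t \in \CC \,:\, |\exp(i\gamma(z)t)| > R_0 \,\bigr\}
\]
is an open half-plane, in particular non-empty and open, and every $t \in U$ satisfies both required disjointness conditions. The only genuinely delicate input is the non-degeneracy of the leading coefficients, but this has already been isolated in Lemma~\ref{lemma-equivalentconditions}; everything else is a uniform modulus estimate over the compact $K$, together with the completeness of the flow $\varphi^z_t$ from Lemma~\ref{lemma-completeness}, which guarantees that $\varphi^z_t$ is defined for all $t \in \CC$.
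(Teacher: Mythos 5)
Your proposal is correct and follows essentially the same route as the paper: both isolate the dominant term $\tfrac{T}{2}(x-iY)$ resp.\ $\tfrac{T}{2}(y+iX)$ of the explicit flow, invoke Lemma \ref{lemma-equivalentconditions} for the non-vanishing of these leading coefficients, use compactness of $K$ for a uniform lower bound, and take $|T|=|\exp(i\gamma(z)t)|$ large, which yields an open half-plane of admissible times. The only difference is cosmetic: the paper writes the subdominant terms as $O(1)$ while you carry explicit constants $c$, $C$, $R_0$.
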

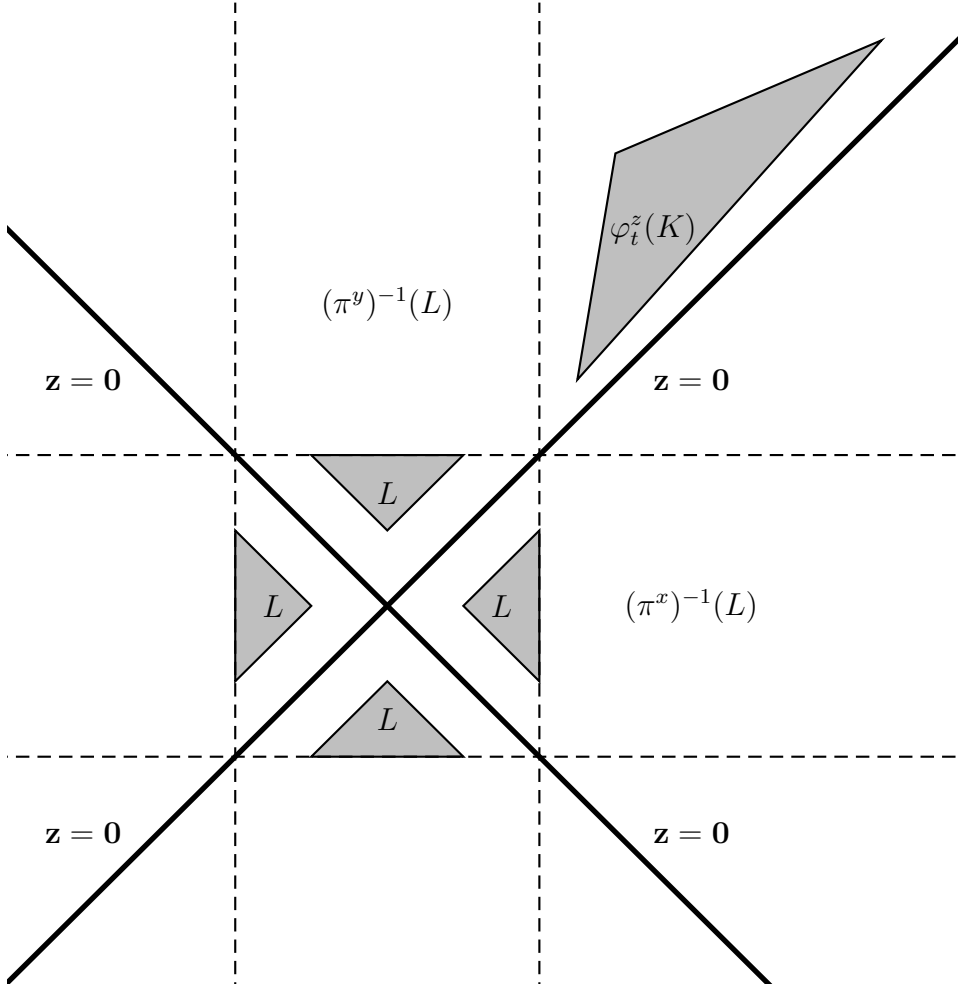
\begin{figure}[ht]
\begin{pspicture*}(-5,-5)(7.5,8)
\psline[linewidth=2pt](-8,-8)(8,8)
\psline[linewidth=2pt](-8,8)(8,-8)
\rput(-4,3){$\mathbf{z=0}$}
\rput(-4,-3){$\mathbf{z=0}$}
\rput(4,3){$\mathbf{z=0}$}
\rput(4,-3){$\mathbf{z=0}$}
\pspolygon[fillcolor=lightgray,fillstyle=solid](1,0)(2,1)(2,-1)
\pspolygon[fillcolor=lightgray,fillstyle=solid](-1,0)(-2,1)(-2,-1)
\pspolygon[fillcolor=lightgray,fillstyle=solid](0,1)(1,2)(-1,2)
\pspolygon[fillcolor=lightgray,fillstyle=solid](0,-1)(1,-2)(-1,-2)
\rput(1.5,0){$L$}
\rput(-1.5,0){$L$}
\rput(0,1.5){$L$}
\rput(0,-1.5){$L$}
\psline[linestyle=dashed](-8,2)(8,2)
\psline[linestyle=dashed](-8,-2)(8,-2)
\psline[linestyle=dashed](2,-8)(2,8)
\psline[linestyle=dashed](-2,-8)(-2,8)
\pspolygon[fillcolor=lightgray,fillstyle=solid](2.5,3)(3,6)(6.5,7.5)
\rput(3.5,5){$\varphi^z_t(K)$}
\rput(4,0){$(\pi^x)^{-1}(L)$}
\rput(0,4){$(\pi^y)^{-1}(L)$}
\end{pspicture*}
\caption{Illustration of Lemma \ref{lemma-faraway}}
\end{figure}

\begin{proof}
We choose a root $\gamma(z) = \sqrt{4 - 9z^2}$ for the whole calculation.  
We write $\gamma(z) t = \tau - i \sigma$ with $\sigma > 0$ and $\tau \in \RR$. For the $x$-coordinate and the $y$-coordinate of $\varphi^z_t$ we have
\begin{align*}
x(t) &= x \cos(\gamma(z) t) + Y \sin(\gamma(z) t) \\
     &= e^\sigma / 2 \cdot e^{i \tau} (x - i Y) + O(1) \\
y(t) &= -X \sin(\gamma(z) t) + y \cos(\gamma(z) t) \\
     &= e^\sigma / 2 \cdot e^{i \tau} (y + i X) + O(1) 
\end{align*}
By Lemma \ref{lemma-equivalentconditions} we have $x - i Y \neq 0$ and $y + iX \neq 0$. Since $K$ is compact, $|x - i Y| > 0$ and $|y + iX| > 0$ obtain minima $>0$ on $K$. We can now choose $\sigma > 0$ large enough to avoid all of the $x$-coordinates and $y$-coordinates of any given compact $L$. 
\end{proof}

\begin{remark}
\label{rem-finitelymany}
For every fixed Markov number $z$, there are infinitely many Markov triples of the form $(x,y,z)$ since
$(x,y,z) \mapsto (3xz-y,x,z)$ iteratively produces new solutions. On the other hand, if we restrict ourselves to ordered Markov triples, i.e.\ with $x \leq y \leq z$, only finitely many such Markov triples exist for a given $z$, see e.g.\ \cite{MR3098784}*{Theorem 3.19}. Of course, if the uniqueness conjecture were known to hold, then exactly one such Markov triple would exist. This would slightly simplify the first step of the following proof of Theorem \ref{theorem-tame}, and Lemma \ref{lemma-faraway} would not be needed anymore. 
\end{remark}

\begin{proof}[Proof of Theorem \ref{theorem-tame}]
Let $(p_j)_{j \in \NN} = (x_j,y_j,z_j)_{j \in \NN} \subset M \cap (\NN \times \NN \times \NN)$ be a sequence without repetition that enumerates the ordered Markov triples, and let $\eta \colon \NN \to \NN$ be any injective map. Our goal is to construct a holomorphic automorphism $F \colon M \to M$ such that $F(p_j) = p_{\eta(j)}$ for all $j \in \NN$. Denote by $\varphi_t^x, \varphi_t^y, \varphi_t^z$ the respective flows of the complete Hamiltonian vector fields $V^x, V^y, V^z$. We will construct $F$ as follows:
\[
F = G^{-1} \circ \varphi^x_{h(x)} \circ \varphi^y_{g(y)} \circ G
\]
with $G = \varphi^x_{f(z)}$, where $f$, $g$ and $h$ are holomorphic functions of the variables $z$, $y$ and $x$, respectively. Let $\pi^x, \pi^y, \pi^z \colon \CC^3 \to \CC$ denote the projections to the respective coordinate axes.
\begin{enumerate}
\item We choose a function $f$ such that $\varphi_{f(z)}^z(x_j,y_j,z_j) = (x'_j, y'_j, z_j)$ satisfies: 
$(x'_j)_{j \in \NN} \subset \CC \setminus \{0, \pm 2/3\}$ is a sequence without repetition and without accumulation points, and $(y'_j)_{j \in \NN} \subset \CC \setminus \{0, \pm 2/3\}$ is a sequence without repetition and without accumulation points: We construct $(x'_j, y'_j, z_j)$ inductively. For each $z_j = k \in \NN$ there are several, but finitely many choices for the pairs $(x_j, y_j, k)$, see Remark \ref{rem-finitelymany}. Let $K = \{(x_j, y_j) \in \CC^2 \,:\, z_j = k \}$ denote the the compact consisting of these points, and let $L = \{(x'_j, y'_j) \in \CC^2 \,:\, z_j < k \}$ denote the compact of all previously chosen images. By Lemma \ref{lemma-faraway} there exists $r_j \in \CC$ such that $\varphi_{r_j}^z(K) \cap L = \emptyset$ holds. Moreover, we can choose $r_j$ in an open set, and hence no repetition of coordinates will occur for a generic choice. By the Mittag-Leffler osculation theorem, there exists a holomorphic function $f \colon \CC \to \CC$ such that $f(z_j) = r_j$.
This ensures that the sequences $(x'_j)_{j \in \NN}$ and $(y'_j)_{j \in \NN}$ have the desired properties. 
\item Let $p'_j := G(p_j) = \varphi_{f(z)}^z(p_j)$. We choose a function $g$ such that
\[
\pi^x \circ \varphi^y_{g(y)}(p'_j) = \pi^x \circ p'_{\eta(j)}
\]
holds for all $j \in \NN$: By Lemma \ref{lemma-surjective} applied to the vector field $V^y$, for each $j \in \NN$ there exists $t_j \in \CC$ such that $\pi^x \circ \varphi^y_{t_j} (p'_j) = \pi^x \circ p'_{\eta(j)}$. By the Mittag-Leffler osculation theorem, there exists a holomorphic function $g \colon \CC \to \CC$ such that $g(y'_j) = t_j$. Note that all $y'_j$ are pairwise different.

\item We choose a function $h$ such that
\begin{align*}
\pi^y \circ \varphi^x_{h(x)} \circ \varphi^y_{g(y)}(p'_j) &= \pi^y \circ p'_{\eta(j)} \\
\pi^z \circ \varphi^x_{h(x)} \circ \varphi^y_{g(y)}(p'_j) &= \pi^z \circ p'_{\eta(j)}
\end{align*}
holds for all $j \in \NN$: For each $j \in \NN$ there exists $s_j \in \CC$ such that 
\begin{align*}
\pi^y \circ \varphi^x_{s_j} \circ \varphi^y_{g(y)}(p'_j) &= \pi^y \circ p'_{\eta(j)} \\
\pi^z \circ \varphi^x_{s_j} \circ \varphi^y_{g(y)}(p'_j) &= \pi^z \circ p'_{\eta(j)}
\end{align*}
by Lemma \ref{lemma-surjective} applied to the vector field $V^x$. By the Mittag-Leffler osculation theorem, there exists a holomorphic function $h \colon \CC \to \CC$ such that $h(x''_j) = s_j$ where $x''_j = \pi^x \circ \varphi^y_{g(y)}(p'_j) = \pi^x \circ p'_{\eta(j)}$. Since the points $p'_{\eta(j)}$ form a subset of the points $p'_j$, their $x$-coordinates are also pairwise different. 
\end{enumerate} 
Finally, $G^{-1}$ maps the points $p'_j$ to $p_j$ for all $j$.
\end{proof}

\begin{remark}
The same construction can be applied to find tame sets in any of the Markov-type surfaces. However, in general there won't exist any tame set whose coordinates are integers or rational numbers. It is enough to choose a sequence of points $p_j = (x_j, y_j, z_j)$ in $M$ such that every sequence of coordinates is without repetition, without accumulation points and avoids a finite set of curves (e.g.\ $z \neq 0$ and $4-9z^2 \neq 0$ in case of $A=B=C=D=0$ and $E=-3$).
\end{remark}

\section*{Funding}
The author was supported by the European Union (ERC Advanced grant HPDR, 101053085 to Franc Forstneri\v{c}) . 

\section*{Conflict of Interest}
The author has no relevant competing interest to disclose.

\begin{bibdiv}
\begin{biblist}

\bib{MR3705282}{article}{
   author={Alarc\'on, Antonio},
   author={Forstneri\v c, Franc},
   author={L\'opez, Francisco J.},
   title={Holomorphic Legendrian curves},
   journal={Compos. Math.},
   volume={153},
   date={2017},
   number={9},
   pages={1945--1986},
   issn={0010-437X},
   review={\MR{3705282}},
   doi={10.1112/S0010437X1700731X},
}

\bib{MR3098784}{book}{
   author={Aigner, Martin},
   title={Markov's theorem and 100 years of the uniqueness conjecture},
   note={A mathematical journey from irrational numbers to perfect
   matchings},
   publisher={Springer, Cham},
   date={2013},
   pages={x+257},
   isbn={978-3-319-00887-5},
   isbn={978-3-319-00888-2},
   review={\MR{3098784}},
   doi={10.1007/978-3-319-00888-2},
}

\bib{CaloSymplo}{article}{
   author={Andrist, Rafael B.},
   author={Huang, Gaofeng},
   title={The symplectic density property for Calogero-Moser spaces},
   journal={J. Lond. Math. Soc. (2)},
   volume={111},
   date={2025},
   number={2},
   pages={Paper No. e70100, 29},
   issn={0024-6107},
   review={\MR{4868759}},
   doi={10.1112/jlms.70100},
}

\bib{MR3906368}{article}{
   author={Andrist, Rafael B.},
   author={Ugolini, Riccardo},
   title={A new notion of tameness},
   journal={J. Math. Anal. Appl.},
   volume={472},
   date={2019},
   number={1},
   pages={196--215},
   issn={0022-247X},
   review={\MR{3906368}},
   doi={10.1016/j.jmaa.2018.11.018},
}

\bib{tametwo}{article}{
   author={Andrist, Rafael B.},
   author={Ugolini, Riccardo},
   title={Tame sets in homogeneous spaces},
   journal={Transform. Groups},
   volume={30},
   date={2025},
   number={1},
   pages={1--18},
   issn={1083-4362},
   review={\MR{4863953}},
   doi={10.1007/s00031-022-09781-1},
}

\bib{MR4588161}{article}{
   author={Andrist, Rafael B.},
   title={Integrable generators of Lie algebras of vector fields on ${\rm
   SL}_2(\mathbb{C})$ and on $xy = z^2$},
   journal={J. Geom. Anal.},
   volume={33},
   date={2023},
   number={8},
   pages={Paper No. 240, 18},
   issn={1050-6926},
   review={\MR{4588161}},
   doi={10.1007/s12220-023-01294-x},
}


\bib{MR2649343}{article}{
   author={Cantat, Serge},
   author={Loray, Frank},
   title={Dynamics on character varieties and Malgrange irreducibility of
   Painlev\'e{} VI equation},
   language={English, with English and French summaries},
   journal={Ann. Inst. Fourier (Grenoble)},
   volume={59},
   date={2009},
   number={7},
   pages={2927--2978},
   issn={0373-0956},
   review={\MR{2649343}},
   doi={10.5802/aif.2512},
}

\bib{MR0342518}{article}{
   author={\`El\cprime-Huti, M. H.},
   title={Cubic surfaces of Markov type},
   language={Russian},
   journal={Mat. Sb. (N.S.)},
   volume={93(135)},
   date={1974},
   pages={331--346, 487},
   issn={0368-8666},
   review={\MR{0342518}},
}

\bib{MR3700709}{book}{
   author={Forstneri\v c, Franc},
   title={Stein manifolds and holomorphic mappings},
   series={Ergebnisse der Mathematik und ihrer Grenzgebiete. 3. Folge. A
   Series of Modern Surveys in Mathematics [Results in Mathematics and
   Related Areas. 3rd Series. A Series of Modern Surveys in Mathematics]},
   volume={56},
   edition={2},
   note={The homotopy principle in complex analysis},
   publisher={Springer, Cham},
   date={2017},
   pages={xiv+562},
   isbn={978-3-319-61057-3},
   isbn={978-3-319-61058-0},
   review={\MR{3700709}},
   doi={10.1007/978-3-319-61058-0},
}

\bib{MR4440754}{article}{
   author={Forstneri\v c, F.},
   author={Kutzschebauch, F.},
   title={The first thirty years of Anders\'en-Lempert theory},
   journal={Anal. Math.},
   volume={48},
   date={2022},
   number={2},
   pages={489--544},
   issn={0133-3852},
   review={\MR{4440754}},
   doi={10.1007/s10476-022-0130-1},
}

\bib{Frobenius}{article}{
   author={Frobenius, G.},
   title={\"Uber die Markoffschen Zahlen},
   language={German},
   journal={Sitzungsberichte der Königlich Preu{\ss}ischen Akademie der Wissenschaften zu Berlin},
   date={1913},
   pages={458--487},
}

\bib{MR0170354}{article}{
   author={Grauert, Hans},
   author={Kerner, Hans},
   title={Deformationen von Singularit\"aten komplexer R\"aume},
   language={German},
   journal={Math. Ann.},
   volume={153},
   date={1964},
   pages={236--260},
   issn={0025-5831},
   review={\MR{0170354}},
   doi={10.1007/BF01360319},
}

\bib{MR2290112}{book}{
   author={Greuel, G.-M.},
   author={Lossen, C.},
   author={Shustin, E.},
   title={Introduction to singularities and deformations},
   series={Springer Monographs in Mathematics},
   publisher={Springer, Berlin},
   date={2007},
   pages={xii+471},
   isbn={978-3-540-28380-5},
   isbn={3-540-28380-3},
   review={\MR{2290112}},
}



\bib{MR4083242}{article}{
   author={Kaliman, Shulim},
   author={Kutzschebauch, Frank},
   author={Leuenberger, Matthias},
   title={Complete algebraic vector fields on affine surfaces},
   journal={Internat. J. Math.},
   volume={31},
   date={2020},
   number={3},
   pages={2050018, 50},
   issn={0129-167X},
   review={\MR{4083242}},
   doi={10.1142/S0129167X20500184},
}

\bib{MR3320241}{article}{
   author={Kutzschebauch, Frank},
   author={Leuenberger, Matthias},
   author={Liendo, Alvaro},
   title={The algebraic density property for affine toric varieties},
   journal={J. Pure Appl. Algebra},
   volume={219},
   date={2015},
   number={8},
   pages={3685--3700},
   issn={0022-4049},
   review={\MR{3320241}},
   doi={10.1016/j.jpaa.2014.12.017},
}

\bib{MR4788527}{article}{
   author={Markoff, A.},
   title={Sur les formes quadratiques binaires ind\'efinies},
   language={French},
   journal={Math. Ann.},
   volume={15},
   date={1879},
   number={3-4},
   pages={381--406},
   issn={0025-5831},
   review={\MR{4788527}},
}

\bib{MR1510073}{article}{
   author={Markoff, A.},
   title={Sur les formes quadratiques binaires ind\'efinies},
   note={(S\'econd m\'emoire)},
   language={French},
   journal={Math. Ann.},
   volume={17},
   date={1880},
   number={3},
   pages={379--399},
   issn={0025-5831},
   review={\MR{1510073}},
   doi={10.1007/BF01446234},
}

\bib{MR0056619}{article}{
   author={Mordell, L. J.},
   title={On the integer solutions of the equation $x^2+y^2+z\sp 2+2xyz=n$},
   journal={J. London Math. Soc.},
   volume={28},
   date={1953},
   pages={500--510},
   issn={0024-6107},
   review={\MR{0056619}},
   doi={10.1112/jlms/s1-28.4.500},
}

\bib{MR4337481}{article}{
   author={Perepechko, A. Yu.},
   title={Automorphisms of surfaces of Markov type},
   language={Russian, with Russian summary},
   journal={Mat. Zametki},
   volume={110},
   date={2021},
   number={5},
   pages={744--750},
   issn={0025-567X},
   translation={
      journal={Math. Notes},
      volume={110},
      date={2021},
      number={5-6},
      pages={732--737},
      issn={0001-4346},
   },
   review={\MR{4337481}},
   doi={10.4213/mzm13263},
}

\bib{MR0223599}{article}{
   author={Reiffen, Hans-J\"org},
   title={Das Lemma von Poincar\'e{} f\"ur holomorphe Differential-formen
   auf komplexen R\"aumen},
   language={German},
   journal={Math. Z.},
   volume={101},
   date={1967},
   pages={269--284},
   issn={0025-5874},
   review={\MR{0223599}},
   doi={10.1007/BF01115106},
}

\bib{MR0929658}{article}{
   author={Rosay, Jean-Pierre},
   author={Rudin, Walter},
   title={Holomorphic maps from ${\mathbf C}^n$ to ${\mathbf C}^n$},
   journal={Trans. Amer. Math. Soc.},
   volume={310},
   date={1988},
   number={1},
   pages={47--86},
   issn={0002-9947},
   review={\MR{0929658}},
   doi={10.2307/2001110},
}

\bib{MR0294699}{article}{
   author={Saito, Kyoji},
   title={Quasihomogene isolierte Singularit\"aten von Hyperfl\"achen},
   language={German},
   journal={Invent. Math.},
   volume={14},
   date={1971},
   pages={123--142},
   issn={0020-9910},
   review={\MR{0294699}},
   doi={10.1007/BF01405360},
}


\bib{MR1829353}{article}{
   author={Varolin, Dror},
   title={The density property for complex manifolds and geometric
   structures},
   journal={J. Geom. Anal.},
   volume={11},
   date={2001},
   number={1},
   pages={135--160},
   issn={1050-6926},
   review={\MR{1829353}},
   doi={10.1007/BF02921959},
}

%
%

\bib{MR1827508}{article}{
   author={Winkelmann, J\"org},
   title={Large discrete sets in Stein manifolds},
   journal={Math. Z.},
   volume={236},
   date={2001},
   number={4},
   pages={883--901},
   issn={0025-5874},
   review={\MR{1827508}},
   doi={10.1007/PL00004855},
}

\bib{MR3978035}{article}{
   author={Winkelmann, J\"org},
   title={Tame discrete subsets in Stein manifolds},
   journal={J. Aust. Math. Soc.},
   volume={107},
   date={2019},
   number={1},
   pages={110--132},
   issn={1446-7887},
   review={\MR{3978035}},
   doi={10.1017/s1446788718000241},
}


\bib{MR0669663}{article}{
   author={Zagier, Don},
   title={On the number of Markoff numbers below a given bound},
   journal={Math. Comp.},
   volume={39},
   date={1982},
   number={160},
   pages={709--723},
   issn={0025-5718},
   review={\MR{0669663}},
   doi={10.2307/2007348},
}

\end{biblist}
\end{bibdiv}

\end{document}